\documentclass[11pt,a4paper]{article}
\usepackage[T1]{fontenc}
\usepackage{lmodern}
\usepackage[margin=27mm,headheight=14pt]{geometry}
\usepackage{amsmath,amssymb,amsthm,mathtools}
\usepackage{microtype,booktabs,longtable,array}
\usepackage{enumitem,listings,xurl}
\usepackage[hidelinks,pdfusetitle]{hyperref}
\usepackage{fancyhdr}
\usepackage{needspace}
\usepackage{float}
\usepackage{graphicx}
\allowdisplaybreaks[2]
\numberwithin{equation}{section}
\newtheorem{theorem}{Theorem}[section]
\newtheorem{lemma}[theorem]{Lemma}
\newtheorem{proposition}[theorem]{Proposition}
\newtheorem{corollary}[theorem]{Corollary}
\theoremstyle{definition}

\theoremstyle{remark}
\newtheorem{remark}[theorem]{Remark}

\newcommand{\nn}{\mathfrak n}
\newcommand{\C}{\mathbb C}
\newcommand{\Z}{\mathbb Z}

\newcommand{\ch}{\operatorname{ch}}
\newcommand{\mult}{\operatorname{mult}}
\newcommand{\Tor}{\operatorname{Tor}}
\newcommand{\height}{\operatorname{ht}}
\newcommand{\len}{\operatorname{len}}
\newcommand{\Aconst}{\mathcal A}

\newcommand{\code}[1]{\nolinkurl{#1}}
\title{A proof of Frenkel's bound for the hyperbolic Kac-Moody Lie Algebra
\texorpdfstring{$A_1^{++}$}{A1++}\\[0.3em]
\large Computer-assisted}
\author{Gabriel B. Legros}
\date{September 12, 2026}
\makeatletter
\newcommand{\outlinebreak}{\par\@nobreakfalse\everypar{}\penalty0}
\makeatother
\begin{document}

\begin{titlepage}
\maketitle
\thispagestyle{empty}
\begin{abstract}
We give a computer-assisted proof of Frenkel's root-multiplicity
bound for the rank-three hyperbolic Kac--Moody Lie algebra
$A_1^{++}$. For every root $\alpha$, we prove
$\dim g_\alpha\le p(1-(\alpha,\alpha)/2)$, where $p$ is the
ordinary partition function.
The proof combines exact affine characters with a coefficientwise
logarithmic majorant derived from parabolic homology. After Weyl reduction, analytic estimates establish the inequality on infinite
regions of large depth or large level. The remaining finite region
is verified by computer. Together with the known low-level formulas, the assembly of these estimates establish the conjecture for every root
of $A_1^{++}$.
\end{abstract}

\paragraph*{AI disclosure}
Generative AI, namely OpenAI's GPT6 Astra, was used to a great extent to develop critical ingredients of the proof. This assistance was
instrumental in making this proof feasible.  GPT6 Astra has also acted as editor to catch mistakes in this manuscript and has generated TikZ diagrams.  The proof was verified in Lean from foundational axioms up to the reduction in Section \ref{sec:remainder}.  The latter is done in Python for performance reasons.

\paragraph*{}
\textit{We dedicate this proof to Igor Frenkel, for stating such a beautiful conjecture.}
\vfill
\end{titlepage}

\tableofcontents
\clearpage

\section{Introduction}\label{sec:main}
Hyperbolic Kac--Moody Lie algebras form a natural class beyond finite
and affine type. Unlike the other two however, they have been very difficult to study. Exact multiplicity formulas and recursive
algorithms exist; the difficulty is to extract effective, uniform
information from them \cite{BM,Kang94root,KK}. They also appear in physics.  Indeed,
Vertex operators also realize these algebras
inside Lie algebras of physical string states, linking the multiplicity
problem with the counting of string excitations \cite{F85,Borch86,GN95}.

Let $p_k(n)$ denote the number of partitions of $n$ in $k$ colours,
defined by
\[
 \sum_{n\geq0}p_k(n)q^n=\prod_{j\geq1}(1-q^j)^{-k},
 \qquad p_k(0)=1,\qquad p_k(n)=0\quad(n<0).
\]
Frenkel proposed that, for a symmetric hyperbolic Kac--Moody algebra
of rank $r$, every root $\alpha$ should satisfy
\[
 \mult(\alpha)\leq p_{r-2}\!\left(1-\frac{(\alpha,\alpha)}2\right).
\]
The conjecture appears explicitly in Section 4, equation (4.19),
at the Chicago Summer Seminar on
\emph{Applications of Group Theory in Physics and Mathematical
Physics}, held in July 1982; the proceedings were published in
1985 \cite{F85}.

The physical motivation comes from the no-ghost theorem of Goddard
and Thorn and of Brower \cite{GT,Brower}. In the critical bosonic
string dimension $26$, physical states of nonzero momentum, after
quotienting by null states, can be represented by transverse
oscillators. At excitation number $N$, their dimension is $p_{24}(N)$.
Frenkel used this theorem to obtain root-multiplicity bounds in the
corresponding dimension-26 Lorentzian setting
\cite[Section 4]{F85}; see also \cite[Section 5.3]{Legros}.
In the vertex construction the physical-state condition is
$N=1-(\alpha,\alpha)/2$, explaining the argument of the partition
function.  Below rank $26$, longitudinal states can survive
and occur in the Kac--Moody algebra itself \cite{GN95,CKMN}.

The algebra studied here, and perhaps the simplest hyperbolic Kac-Moody Lie algebra, is the rank-three hyperbolic algebra with
Cartan matrix
\begin{equation}\label{eq:cartan}
 A=\begin{pmatrix}2&-1&0\\-1&2&-2\\0&-2&2\end{pmatrix},
 \qquad (\alpha_i,\alpha_j)=A_{ij}
\end{equation}
where $i,j\in\{-1,0,1\}$. We denote the associated complex
Kac--Moody Lie algebra by $g(A)$; it is also known as $A_1^{++}$,
$HA_1^{(1)}$, $AE_3$, or $\mathfrak F$ \cite{FF,CKMN}.
Its simple roots are $\alpha_{-1},\alpha_0,\alpha_1$, and the nodes
$0,1$ generate the affine subalgebra $A_1^{(1)}$.

Denote by $g_\alpha$ the root space of $g(A)$ at root $\alpha$.  Any positive root may be written as
\begin{equation}\label{eq:coordinates}
 \alpha=\ell\alpha_{-1}+d\alpha_0+(d+s)\alpha_1,
 \qquad m(\ell,d,s)=\dim g_\alpha.
\end{equation}

We call $\ell$ the level, $d$ the depth, and $s$ the charge. 

Feingold and Frenkel's 1983 paper \cite{FF} studied this algebra
through its affine subalgebra and its decomposition by level, and
related its structure to Siegel modular forms of genus two.
The basic affine representation gives the exact level-one
multiplicities, which attain Frenkel's bound. At level two they
identified an exterior square of the basic representation modulo the
submodule generated by a Serre relation and obtained an exact
multiplicity generating function. These low-level formulas and the
computed multiplicities supplied evidence for the rank-three
conjecture \cite{FF,Kac}; the character formulas are also developed
in \cite{BB,CKMN}.

The general conjecture is nevertheless false. Kac, Moody, and
Wakimoto's calculation for $E_{10}=E_8^{++}$, published in 1988
\cite{KMW}, gives a counterexample already at level two relative to
the affine $E_9$ subalgebra. There are roots of squared length $-6$
with
\[
 \mult_{E_{10}}(\alpha)=727>726=p_8(4)
 =p_8\!\left(1-\frac{(\alpha,\alpha)}2\right);
\]
see also the multiplicity table in \cite[Table 1]{BGN}.  The conjecture remained open for $A_n^{++}$.

Kang developed a homological theory of graded Lie algebras using Hochschild--Serre
spectral sequences \cite{Kang93spectral}, and applied it to the
root multiplicities of $HA_1^{(1)}$ in papers published in 1993 and
1994 \cite{Kang93,Kang94HA}. This program obtained formulas through
level five \cite[Introduction]{KK}; further multiplicity formulas using homology were given
in \cite{Kang94root,KK}. Related calculations for the family
$HA_n^{(1)}=A_n^{++}$ were carried out by Kang and Melville
\cite{KM94}. These works supply exact formulas, but evaluating them
and controlling their cancellations remain substantial
tasks.

Bauer and Bernard's 1996 preprint, published in 1997 \cite{BB},
combined the free-Lie presentation with the coset construction to
compute level-three multiplicities for $A_1^{++}$ and $E_{10}$.
Here the positive-level algebra is a \emph{quotient} of the free Lie
algebra on the level-one module by the ideal of relations, as in the
earlier constructions \cite{FF,KMW}. Their character calculation
takes logarithms of a Poincar\'e--Birkhoff--Witt product identity
\cite[Section 1.2, equations (7)--(17)]{BB}. This logarithmic
extraction is quite powerful.  Its earliest precedent can be found in Berman and Moody's 1979
multiplicity formula which uses the logarithm of the denominator identity
and M\"obius inversion \cite[Theorem 2]{BM}.
Kang and Kim subsequently developed generalized Witt and
denominator formulas for free and graded Lie algebras
\cite{KK96,KK}.

On the physics side, the string-theoretic approach uses the physical-state operators of
Del Giudice, Di Vecchia, and Fubini (DDF) \cite{DDF}.
Gebert and Nicolai exhibited both transverse and longitudinal DDF
states in $E_{10}$ \cite{GN95} and constructed affine vertex
operators at arbitrary level \cite{GN97}. Further work studied
multistring vertices, Sugawara operators, and the physical states
missing from the Kac--Moody subalgebra \cite{GNW96,GKN97,BGGN}.
For $A_1^{++}$ itself, Capolongo, Kleinschmidt, Malcha, and Nicolai
\cite{CKMN} developed explicit low-level constructions and analyzed
the affine and coset Virasoro structures. The longitudinal operators
and the removal of states imposed by the Lie algebra relations
complicate a description at higher levels.
The author spent many years pursuing this approach without success; related results on physical-state
Lie algebras and longitudinal vertex operators appear in the author's
2021 thesis \cite[Sections 5.3--5.4]{Legros}.

A different approach uses automorphic forms. Gritsenko and
Nikulin developed automorphic corrections of Lorentzian
Kac--Moody algebras \cite{GrNi97}; for the particular matrix
\eqref{eq:cartan}, their correction has a denominator given by the
Igusa modular form of weight $35$ \cite{GrNi96}.
Niemann constructed generalized Kac--Moody algebras with known
multiplicities and obtained upper bounds for $A_1^{++}$ from such
an ambient algebra \cite{Niemann}. Kim and Lee applied the
Hardy--Ramanujan--Rademacher method to the resulting modular-form
coefficients, obtaining asymptotic estimates for these bounds
\cite{KL13}. These comparison functions differ from $p(n)$, so
these bounds do not establish the sharper inequality
sought here.

A turning point came in September 2026, when AI assistance from OpenAI’s GPT6 Astra made significant contributions to the problem.

In this manuscript we prove the rank-three bound by combining exact
affine characters, a coefficientwise logarithmic majorant obtained
from parabolic homology, analytic estimates, and finite verification.
The logarithmic extraction and homological character identities
build on the methods above \cite{GL,BM,Kang94root,KK,BB}; the
minimal-resolution argument in Section~\ref{sec:homology} establishes
what is needed to use a finite homology truncation as an upper
bound.

\begin{theorem}[Frenkel's inequality for $A_1^{++}$]\label{thm:main}
For every root $\alpha$ of the algebra \eqref{eq:cartan},
\[
             \dim g_\alpha\le p\!\left(1-\frac{(\alpha,\alpha)}2\right).
\]
\end{theorem}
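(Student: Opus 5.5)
We prove the inequality by reducing to a fundamental region, majorizing multiplicities by a logarithmic character bound, and then splitting the remaining roots into analytic and finite cases.

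\textbf{Reductions.} Both sides of the inequality are Weyl-invariant, since $\mult$ and $(\alpha,\alpha)$ are preserved by the Weyl group $W$. It therefore suffices to treat positive imaginary roots in the fundamental chamber, together with the real roots. Real roots satisfy $(\alpha,\alpha)=2$ and have multiplicity $1=p(0)$, so they are immediate. Using the coordinates \eqref{eq:coordinates}, we then apply the Weyl group of the affine subalgebra $A_1^{(1)}$ (nodes $0,1$). This lets us normalize the charge to $s\in\{0,\dots,\ell\}$, or even to $|s|\le \ell/2$ after the reflection $r_1$. The norm in these coordinates is
\[
(\alpha,\alpha)=2\ell^2-2\ell s+2s^2-4d\,\ell+\dots,
\]
which we will write out explicitly from $A$. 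In particular, $N:=1-(\alpha,\alpha)/2$ grows linearly in the depth $d$ at fixed level. Levels $\ell=0,1,2$ are settled by the known formulas. At level $0$ the roots are affine, with multiplicity at most $1\le p(N)$. Level one is the basic representation, where equality holds \cite{FF}. Level two uses the Feingold--Frenkel exterior-square formula, with the required inequality checked directly against $p$ by comparing generating functions. So from now on we may assume $\ell\ge 3$.

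\textbf{Upper bound from homology.} For $\ell\ge3$ we present the positive part as $\bigoplus_{\ell\ge1} g_\ell$, with $g_1=L(\Lambda_0)$ the basic module of $A_1^{(1)}$ and each $g_\ell$ a quotient of the degree-$\ell$ part of the free Lie algebra on $g_1$. Using the parabolic (Kang/Hochschild--Serre) homology identity, namely that the Euler characteristic of $H_*(\mathfrak n^+_{\ell>0})$ equals $\prod_\alpha(1-e^\alpha)^{\mult\alpha}$ expressed through affine characters, we take logarithms as in \cite{BB,BM}. After Möbius inversion, $\mult(\alpha)$ is a signed sum of affine string functions. The key claim, to be justified by a minimal-resolution argument, is that truncating the homology after $H_1$ and $H_2$ gives a \emph{coefficientwise} upper bound. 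Concretely, $H_1=g_1$ and $H_2$ is generated by the Serre relation module, and dropping the higher $H_k$ only increases the multiplicities. The result is a majorant $M(\ell,d,s)\ge \mult(\alpha)$, given by an explicit combination of products of $A_1^{(1)}$ string functions, all of which have closed eta-quotient forms.

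\textbf{Analytic and finite regions.} Next we bound $M(\ell,d,s)$ by exponential estimates. The string functions at level $k$ have coefficients bounded by $C\,e^{\pi\sqrt{c_k n\cdot 2/3}}$ with explicit central charge $c_k=3k/(k+2)<3$. The free-Lie count at level $\ell$ contributes at most $(\dim g_1\text{-growth})^\ell/\ell$. We compare these with the Rademacher lower bound $p(N)\ge \frac{1}{4\sqrt3 N}e^{\pi\sqrt{2N/3}}(1-1/\sqrt N)$. In the fundamental chamber, $N$ is quadratic in $\ell$, of size roughly $\ell^2+2\ell d$, while the majorant's exponent is of order $\sqrt{\ell d\, c}$ with a strictly smaller effective constant. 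This gives the inequality whenever $d\ge D_0$ or $\ell\ge L_0$ for explicit $D_0,L_0$. The remaining finite box $3\le\ell<L_0$, $d<D_0$ is handled by exact computation: either of $M$, or directly of $\mult$ via the Peterson/Kang recursion. The \textbf{main obstacle} is the coefficientwise majorant step. Logarithmic extraction produces cancelling signs, and we must show that the truncated expression dominates the true multiplicity term by term. Our first attempt would be to show that the omitted higher homology contributes with a definite sign after Möbius inversion, using minimality of the resolution. The second difficulty is making the analytic constants small enough that the finite box is computationally tractable.
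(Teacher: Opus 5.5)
Your overall architecture (Weyl reduction, levels $\le2$, a homological majorant, analytic tails, a finite computation) matches the paper's. However, the step you call the key claim is false. For the minimal resolution truncated at homological degree $m$, the Euler characteristic gives $D_mU=1+(-1)^mZ_m$. Here $D_m=\sum_{i\le m}(-1)^i\ch H_i(\nn,\C)$, $U=\ch U(\nn)$, and $Z_m\succeq0$ is the character of the $m$-th kernel. Only for odd $m$ does this yield $-\log D_m=\log U+\sum_{r\ge1}Z_m^r/r\succeq\log U$; for even $m$ the correction has the opposite sign.

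Concretely, take $m=2$. We have $\sum_i(-1)^i\ch H_i=U^{-1}$, $H_3$ is the single module $t^4q^3\chi_{4,4}$, and $H_i$ for $i\ge4$ lives in levels $\ge5$. Hence $D_2=U^{-1}+t^4q^3\chi_{4,4}+O(t^5)$ and $[t^4](-\log D_2)=[t^4]\log U-q^3\chi_{4,4}$. At $(4,3,-2)$, which is neither a root nor an integer multiple of one, this coefficient is $-1$. You can check this directly: the $t^4q^3z^{-2}$ coefficient of $v^4/4-v^2r+r^2/2$ is $6-8+1$. At any level-four root of odd depth, the coefficient equals $m(4,d,s)-\dim L_{4,4}[d-3,s]$, which lies below the true multiplicity whenever that weight space is nonzero. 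So dropping $H_3,H_4,\dots$ lowers the bound rather than raising it.

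You should also not M\"obius-invert. The paper uses $\dim g_\alpha\le[e^\alpha]\log U$ directly, since the $r\ge2$ terms of the PBW logarithm are nonnegative; inversion reintroduces exactly the signs you are worried about. Finally, the odd truncation $m=1$ is just the free Lie bound, which fails near $d=2\ell$: at $(10,20,0)$ it gives about $1.2\times10^{11}$ against $p(101)\approx2.1\times10^{8}$. So at least $H_3$ is indispensable; the paper uses $m=5$, reading $H_3,H_4,H_5$ off the Garland--Lepowsky formula over $W(J)$.

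The analytic step also lacks its mechanism. The norm is $(\alpha,\alpha)=2(\ell^2-\ell d+s^2)$, so $N=\ell d-\ell^2+1-s^2$, not roughly $\ell^2+2\ell d$. With the correct $N$ there is no strictly smaller effective constant at large depth. For fixed $\ell$ the free-Lie majorant $\frac1\ell[q^d]P^\ell\Theta_{\ell,s}$ grows like $e^{\Aconst\sqrt{\ell d}}$, exactly the rate of $p(N)$. The paper's large-depth argument wins only through the prefactor $2^{-(\ell-1)/2}$, produced by the Gaussian bound on $\Theta_{\ell,s}$ and the circle integral. It balances this against the exponent gap $\Aconst(\sqrt{N+\ell^2-1}-\sqrt N)$ by taking $d\ge165\ell$. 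Small central charges of individual affine modules do not help, since $g_{(\ell)}$ is an infinite sum of level-$\ell$ modules with growing branching multiplicities.

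In the bounded-ratio regime $2\ell\le d<165\ell$ you need a bound on $[t^\ell](-\log D_m)$ that is uniform in $\ell$. Neither a factor $(\text{growth})^\ell/\ell$ nor termwise estimates of $\sum_k(1-D_m)^k/k$ can supply it. Both discard the cancellation among the alternating homology characters and do no better than the free-Lie bound, which already fails near $d=2\ell$. The missing idea is the following. Fix positive $(q,z)$ and view $D_5(t,q,z)$ as a polynomial of degree $11$ in $t$, so that $[t^\ell](-\log D_5)=\frac1\ell\sum_j\rho_j^{-\ell}$. Then certify $|\rho_j|\ge T(q,z)$, with $T$ tuned so that $T^{-\ell}q^{-d}z^{-s}\le e^{\Aconst\sqrt{N-1}-\ell/60}$. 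Log-convexity of nonnegative series and interpolation in $(-\log q,-\log z)$ reduce this to finitely many certified checks on a compact domain. Without something of this kind, your $L_0$ is never actually produced.

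Lastly, level two is an infinite family, not a direct check. It needs the branching identity $\ch\Lambda^2V=H(q)\,r(q,z)$ and an injection of partitions giving $Q\succeq A_2$.
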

The proof is broken down in steps as follows.

\begin{proposition}\label{prop:architecture}
The following statements alongside Weyl reduction (Section \ref{sec:geometry}) together prove the theorem.
\begin{enumerate}[label=\textup{(\roman*)},leftmargin=2em]
\item (Section \ref{sec:low}) Real and null roots satisfy the bound. Level one roots satisfy $m(1,d,s)=p(d-s^2)$, and all level-two roots satisfy the bound. After Weyl reduction, level-two equality
occurs exactly for $0\le N\le19$.
\item (Section \ref{sec:analytical}) For $\ell\ge3$, $|s|\le\ell/2$, and $d\ge165\ell$, the bound is
strict.
\item (Section \ref{sec:homology}) For $\ell\ge1000$, $2\ell\le d\le165\ell$, and
$-\ell/2\le s\le0$, the bound is strict.
\item (Section \ref{sec:remainder}) The bound holds for the finite region
\begin{equation}\label{eq:bounded-region}
\begin{aligned}
 \mathcal B={}&\bigl\{(\ell,d,s)\in\Z^3:3\le\ell<1000,\quad
 2\ell\le d<165\ell,\\
 &\hspace{2em}-\lfloor\ell/2\rfloor\le s\le0\bigr\}.
\end{aligned}
\end{equation}
\end{enumerate}
\end{proposition}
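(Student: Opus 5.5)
The plan is to reduce an arbitrary root $\alpha$ to a canonical representative and then check that the four regions (i)--(iv) cover every canonical representative. Both $\dim g_\alpha$ and $(\alpha,\alpha)$ are invariant under the Weyl group $W$, so Theorem~\ref{thm:main} for $\alpha$ is equivalent to the theorem for any $w\alpha$. Negative roots have the same multiplicity and norm as their negatives, so only positive roots matter.

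Real and null roots are handled directly by (i). For real roots the multiplicity is $1$, the norm is $(\alpha,\alpha)=2$, and $p(0)=1$. For null roots the norm is $0$, and the multiplicity is $m(\ell,d,s)\le p(1)=1$; here I would rely on the statement in (i) as given.

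\textbf{Imaginary roots.} For an imaginary root $\alpha$, I will use the standard fact that its $W$-orbit contains an element of the fundamental chamber
\[
K=\{\beta>0:(\beta,\alpha_i)\le0\ \text{for all } i\}.
\]
Writing $\beta=\ell\alpha_{-1}+d\alpha_0+(d+s)\alpha_1$, the three conditions $(\beta,\alpha_i)\le 0$ for $i=-1,0,1$ are linear inequalities in $(\ell,d,s)$. Expanding them with \eqref{eq:cartan}, and using also the affine reflections in nodes $0,1$, which act on $(d,s)$ at fixed level $\ell$, I expect Section~\ref{sec:geometry} to supply a reduction to the normalized range
\[
\ell\ge1,\qquad -\ell/2\le s\le 0,\qquad d\ge 2\ell.
\]
The constraint $-\ell/2\le s\le0$ combines the affine Weyl translations, which shift $s$ modulo $\ell$, with the diagram symmetry $s\mapsto -s$. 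The condition $d\ge 2\ell$ comes from the dominance condition at node $-1$ together with the norm formula
\[
(\alpha,\alpha)/2=\ell^2-\ell d+ds+\cdots .
\]
I would compute these inequalities explicitly and take the precise normalized region from Section~\ref{sec:geometry}.

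\textbf{Covering argument.} Given a normalized $(\ell,d,s)$:
\begin{itemize}
\item If $\ell\le2$, statement (i) applies: level one has $m=p(d-s^2)$, which equals the bound since $1-(\alpha,\alpha)/2=d-s^2$ at $\ell=1$, and level two is covered outright.
\item If $\ell\ge3$ and $d\ge165\ell$, statement (ii) applies, since $|s|\le\ell/2$.
\item If $\ell\ge1000$ and $2\ell\le d\le 165\ell$, statement (iii) applies, since $-\ell/2\le s\le0$.
\item If $3\le\ell<1000$ and $2\ell\le d<165\ell$, the point lies in $\mathcal B$ and statement (iv) applies. Integrality of $s$ turns $-\ell/2\le s$ into $-\lfloor\ell/2\rfloor\le s$.
\end{itemize}
These cases are exhaustive for normalized roots, which proves the theorem.

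\textbf{Main obstacle.} The delicate step is confirming that the Weyl reduction really lands in $d\ge 2\ell$ and $-\ell/2\le s\le 0$ simultaneously. The dominance inequality at node $-1$ reads $2\ell-d\le0$, which gives $d\ge 2\ell$. The affine-node inequalities confine $s$, using $|s|\le \ell$ type bounds. I must then check that the symmetry flipping $s\mapsto -s$ preserves depth and norm, so that (ii)--(iv), stated only for $s\le0$ in (iii) and (iv), suffice. I would verify this by writing the simple reflection $r_1$ in these coordinates and checking that its composition with the affine translation gives an involution on $(d,s)$ fixing $\ell$ and the norm.
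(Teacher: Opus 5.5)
Your proposal is correct and follows the paper's route. You reduce to the chamber $d\ge2\ell$, $-\ell/2\le s\le0$, which in Lemma~\ref{lem:full-reduction} comes straight from $(\beta,\alpha_i)\le0$ via \eqref{eq:pairings}, with $\ell=0$ forcing $s=0$ and hence norm zero, so that negative-norm roots have $\ell\ge1$. You then split by level and depth exactly as the paper does.

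One small slip: the norm is $(\alpha,\alpha)/2=\ell^2-\ell d+s^2$, with no $ds$ term. Neither the norm nor any $s\mapsto-s$ argument is needed for the normalization, because the node-$0$ and node-$1$ inequalities $-\ell-2s\le0$ and $2s\le0$ already give the charge range.
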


The levels in parts (ii)--(iv) are those of the Weyl-reduced roots.

This proof is computer-assisted.  The reduction in Section \ref{sec:homology} involves a check by subdividing an infinite region into finitely many rectangles.  The finite region in \ref{sec:remainder} is a simple, but computational check.  Both of these would be difficult to do without a computer.

\begin{figure}[H]
\centering
\includegraphics[width=\linewidth]{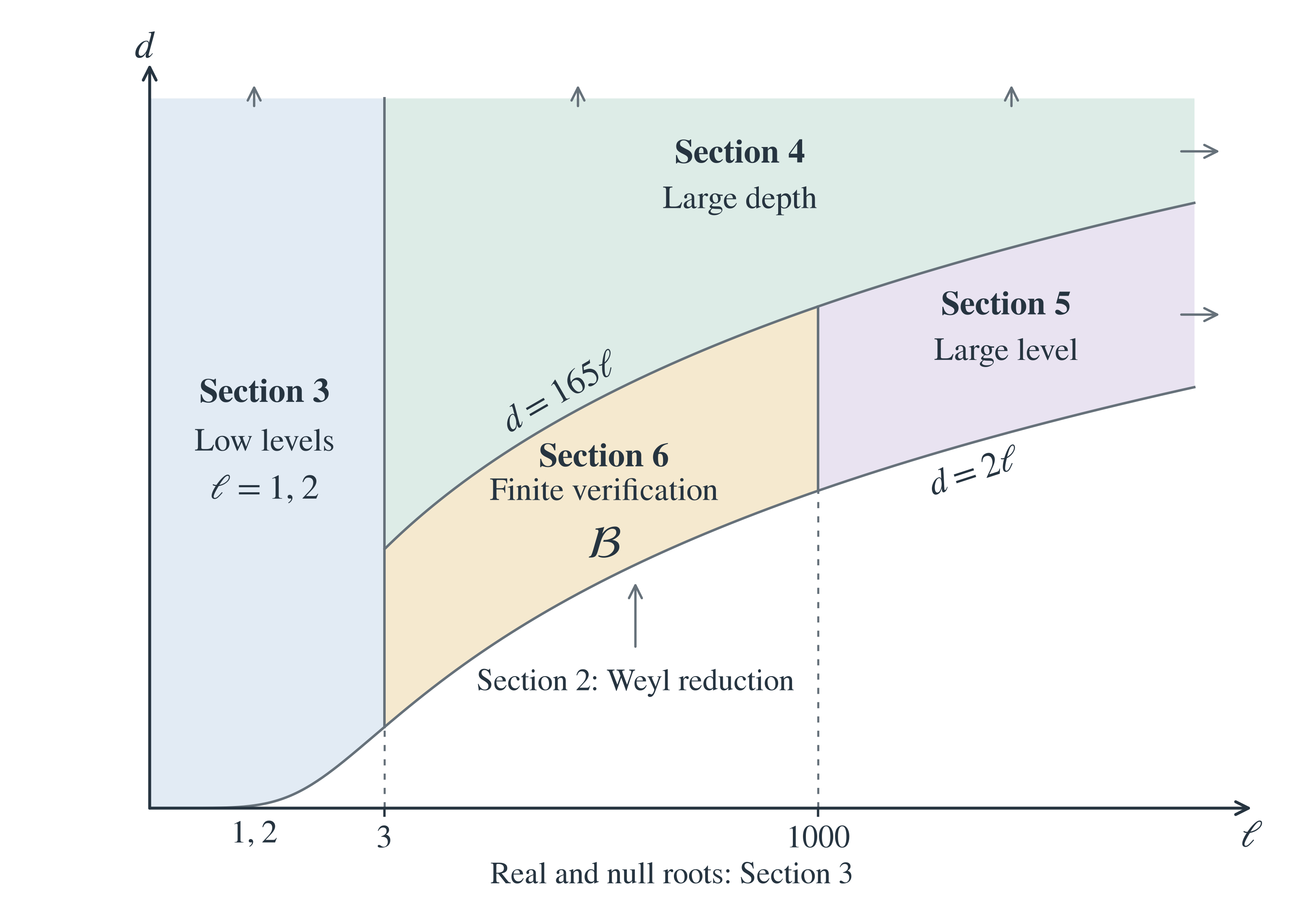}
\caption{The regions in Proposition~\ref{prop:architecture}, shown for
Weyl-reduced roots of negative norm. The charge satisfies
$-\ell/2\le s\le0$ throughout.}
\label{fig:proof-regions}
\end{figure}

\section{Root geometry, grading, and Weyl reduction}\label{sec:geometry}
The root geometry and affine-level structure of $A_1^{++}$ used below
were studied by Feingold and Frenkel \cite[Sections 2 and 4]{FF}.
We recall the required results in our conventions and give explicit
proofs of the reduction statements.

\subsection{Basic definitions and conventions}

The algebra $g(A)$ is graded by a root lattice, with root decomposition
$$g(A) = \mathfrak h \oplus \bigoplus\limits_{\alpha \in \Delta} g_\alpha$$
where $\mathfrak h$ is the Cartan subalgebra and $\Delta = \Delta^+ \cup \Delta^-$ is the set of roots, split into positive and negative roots.  A root is real if it is Weyl conjugate to a simple root, and imaginary otherwise.  For this algebra, all real roots satisfy $(\alpha, \alpha) = 2$, and imaginary roots satisfy $(\alpha, \alpha) \leq 0$ \cite[Section 2, Proposition 2.1]{FF}; see also \cite[Chapter 5]{Kac} for the general root-system theory.

Let
\begin{equation}\label{eq:N}
 (\alpha,\alpha)=2(\ell^2-\ell d+s^2),\qquad
 N(\ell,d,s)=1-\frac{(\alpha,\alpha)}2=\ell d-\ell^2+1-s^2
\end{equation}
and $p(n)$ be the ordinary partition function, with $p(0)=1$ and $p(n)=0$ for $n<0$.

The simple-root coefficients of positive roots are nonnegative and those of negative roots are nonpositive.  The Chevalley involution is an automorphism of $g(A)$ sending $g_\alpha$ to $g_{-\alpha}$.  Therefore, opposite roots have the same multiplicity, and to prove the theorem it is enough to prove it for positive roots.

For a vector space $M$ graded by the nonnegative integer span of the simple roots, with finite-dimensional graded pieces and $\ell > 0$, write
\begin{equation}\label{eq:character}
 \ch M=\sum_{\ell,d,s}\dim M_{\ell,d,s}\,t^\ell q^d z^s.
\end{equation}
These characters are formal series, completed by height $\height(\alpha)=\ell+2d+s$. Each degree has only finitely many decompositions into positive degrees, so products, and inverses and logarithms of series with constant term $1$, are well-defined.
For two characters $F$ and $G$, we write $F\succeq G$ if every
coefficient of $F-G$ is nonnegative.

\subsection{Full reduction of imaginary roots}
The Weyl group acts on the root lattice, preserving roots, root multiplicity and norm.  Also, simple reflections send positive roots to other positive roots, except for the corresponding simple root \cite[Chapters 3 and 5]{Kac}.  The Weyl action and root geometry for this algebra are developed in \cite[Section 2]{FF}. We use these to reduce the problem and acquire certain assumptions on level, depth and charge.

\begin{lemma}\label{lem:full-reduction}
Every positive imaginary root can be moved by the Weyl group to a
positive root satisfying
\begin{equation}\label{eq:chamber}
 d\ge2\ell,\qquad -\ell/2\le s\le0.
\end{equation}
After reduction, if its norm is negative, then $\ell>0$. If its norm is zero, then the
reduced root is $d(\alpha_0+\alpha_1)$, with coordinates $(0,d,0)$ and $d>0$.
\end{lemma}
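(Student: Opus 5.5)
The plan is the standard fundamental-chamber argument: choose a Weyl conjugate of minimal height and translate the inequalities $(\alpha,\alpha_i)\le0$ into the coordinates $(\ell,d,s)$.

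\textbf{Coordinates.} Write $\alpha=\ell\alpha_{-1}+d\alpha_0+(d+s)\alpha_1$. From the Cartan matrix \eqref{eq:cartan},
\[
(\alpha,\alpha_{-1})=2\ell-d,\qquad (\alpha,\alpha_0)=-\ell-2s,\qquad (\alpha,\alpha_1)=2s .
\]
Each simple root has height $1$, and $r_i(\alpha)=\alpha-(\alpha,\alpha_i)\alpha_i$. So applying $r_i$ changes the height $\height(\alpha)=\ell+2d+s$ by exactly $-(\alpha,\alpha_i)$. In coordinates the reflections act as
\[
r_{-1}:(\ell,d,s)\mapsto(d-\ell,d,s),\quad
r_0:(\ell,d,s)\mapsto(\ell,d+\ell+2s,-\ell-s),\quad
r_1:(\ell,d,s)\mapsto(\ell,d,-s).
\]

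\textbf{Reduction step.} Let $\alpha$ be a positive imaginary root. Its Weyl orbit consists of imaginary roots, and none of them is a simple root. Each simple reflection sends a positive root other than $\alpha_i$ to a positive root. Hence every element of the orbit reachable by successive simple reflections stays positive.

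Among these positive roots, pick one of minimal height. This exists because heights of positive roots are positive integers. Minimality forces $(\alpha,\alpha_i)\le0$ for $i=-1,0,1$, since otherwise $r_i$ would strictly lower the height. The three inequalities read $d\ge2\ell$, $s\ge-\ell/2$ and $s\le0$, which is exactly \eqref{eq:chamber}. Reflections preserve norm and multiplicity, so nothing is lost.

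\textbf{Norm analysis.} By \eqref{eq:N}, $(\alpha,\alpha)/2=\ell^2-\ell d+s^2$. Suppose $\ell\ge1$. Then $d\ge2\ell$ and $s^2\le\ell^2/4$ give
\[
\ell^2-\ell d+s^2\le-\ell^2+\ell^2/4<0 .
\]
Now suppose $\ell=0$. Then $-\ell/2\le s\le0$ forces $s=0$, so the norm is $0$. Therefore a reduced root has negative norm exactly when $\ell>0$, and zero norm exactly when $\ell=0$.

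In the zero-norm case the root is $d\alpha_0+d\alpha_1=d(\alpha_0+\alpha_1)$, with coordinates $(0,d,0)$. Here $d>0$ because the root is positive and hence nonzero. Since imaginary roots have $(\alpha,\alpha)\le0$, these two cases exhaust the possibilities.

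\textbf{Main obstacle.} The computations are routine. The only point needing care is the positivity-preservation step: we must justify that no simple reflection ever maps the current root to a negative root. This holds because an imaginary root is never $\pm$ a simple root, which I will cite from \cite[Chapter 5]{Kac}.
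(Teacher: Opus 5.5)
Your proposal is correct and follows essentially the same route as the paper. The paper runs an explicit height-decreasing descent by simple reflections, terminating because heights of positive roots are positive integers; this is equivalent to your choice of a minimal-height element of the orbit, and your norm analysis via $\ell^2-\ell d+s^2\le-\tfrac34\ell^2$ matches the paper's, as does your positivity-preservation argument (imaginary roots are never simple roots).
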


\begin{proof}
Write $\alpha = \ell \alpha_{-1} + d \alpha_0 + (d+s) \alpha_1$.  Then
\begin{equation}\label{eq:pairings}
 (\alpha,\alpha_{-1})=2\ell-d,\qquad
 (\alpha,\alpha_0)=-\ell-2s,\qquad
 (\alpha,\alpha_1)=2s.
\end{equation}
The height of a root is the sum of its simple-root coefficients.  If $(\alpha,\alpha_i)>0$, the simple reflection $r_i(\alpha)=\alpha-(\alpha,\alpha_i)\alpha_i$ lowers the height by this positive integer.  The root remains positive and imaginary, so its height remains a positive integer.  The process therefore terminates.  At termination, $2\ell - d \leq 0$, $-\ell-2s \leq 0$ and $2s \leq 0$, proving the inequalities.

Now assume the root is reduced.  If $\ell = 0$ then $s = 0$ as well, hence $(\alpha, \alpha) = 0$. Thus a reduced root of negative norm has $\ell>0$.

Furthermore, if $(\alpha, \alpha) = 0$,
$$0 = (\alpha, \alpha) = 2(\ell^2 - \ell d + s^2) \leq - \frac{3}{2} \ell^2 \leq 0.$$
The inequalities force $\ell=0$.  Then $s=0$, and $d>0$ because the root is positive and nonzero.
\end{proof}

The following lemma will be used for the full examination of multiplicities at level 2.

\begin{lemma}[Charge reduction at level two]\label{lem:level2-charge-reduction}
Every level-two root with coordinates $(2,d,s)$ is affine Weyl
conjugate to a root $(2,d',s')$ with $s'\in\{0,1\}$, where
\[
 s'=\begin{cases}0,&s\text{ even},\\1,&s\text{ odd},\end{cases}
 \qquad d'=d-\frac{s^2-(s')^2}{2}.
\]
Consequently, we may study level 2 multiplicities by the cases
$s=0$ and $s=1$.
\end{lemma}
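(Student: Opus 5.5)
We will use the affine Weyl group of the subalgebra $A_1^{(1)}$ generated by the reflections $r_0$ and $r_1$. It fixes the level $\ell$, since neither reflection changes the coefficient of $\alpha_{-1}$. From \eqref{eq:pairings}, the two reflections act on coordinates by
\[
 r_1(\ell,d,s)=(\ell,d,-s),\qquad r_0(\ell,d,s)=(\ell,d+\ell+2s,s+\ell+2s)=(\ell,d+\ell+2s,\ell+s).
\]
The second formula comes from adding $-(\alpha,\alpha_0)=\ell+2s$ to the $\alpha_0$ coefficient: the new depth is $d+\ell+2s$, and the $\alpha_1$ coefficient $d+s$ is unchanged, so the new charge is $(d+s)-(d+\ell+2s)=-\ell-s$. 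Combining this with $r_1$, the composite $t=r_1r_0$ sends $(\ell,d,s)\mapsto(\ell,\,d+\ell+2s,\,\ell+s)$, which is translation of the charge by $\ell$.

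At level $\ell=2$ we get $t(2,d,s)=(2,d+2+2s,s+2)$. Its inverse is $(2,d,s)\mapsto(2,d-2s+2,s-2)$; we check this directly, or read it off from the conjugate $r_0r_1$. Both maps preserve the norm, because $N=2d-4+1-s^2$ is Weyl invariant. Indeed $2(d+2+2s)-(s+2)^2=2d-s^2$, so the depth is determined by the norm and the charge.

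Repeated application of $t^{\pm1}$ shifts $s$ by multiples of $2$ and so brings it to $s'\in\{0,1\}$, the residue of $s$ mod $2$. Norm invariance then forces $2d'-(s')^2=2d-s^2$, that is, $d'=d-\tfrac{s^2-(s')^2}{2}$. This is an integer because $s\equiv s'$ mod $2$.

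The remaining point is that the image $(2,d',s')$ is again a root, and in fact a positive one. Weyl group elements send roots to roots. The level of the image is $2>0$, and roots with one positive coefficient have all coefficients nonnegative, so the image is positive. Hence multiplicities at level two depend only on $N$ and the parity of $s$, which gives the "consequently" clause. This argument is routine; the only step needing care is the sign bookkeeping in $r_0$. No genuine obstacle arises.
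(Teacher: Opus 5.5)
Your proof is correct and follows the paper's argument: you compute the actions of $r_0$ and $r_1$, observe that at level two the products $r_1r_0$ and $r_0r_1$ shift the charge by $\pm2$, and recover $d'$ from invariance of the norm. One slip to fix: your displayed formula for $r_0$ should read $r_0(\ell,d,s)=(\ell,d+\ell+2s,-\ell-s)$, which is what your next sentence correctly derives, so the expression $s+\ell+2s=\ell+s$ in the display is wrong; your composite $r_1r_0(\ell,d,s)=(\ell,d+\ell+2s,\ell+s)$ is nevertheless right.
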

\begin{proof}
The affine simple reflections act by
\begin{equation}\label{eq:affine-reflections}
 r_1:(\ell,d,s)\mapsto(\ell,d,-s),\qquad
 r_0:(\ell,d,s)\mapsto(\ell,d+\ell+2s,-\ell-s).
\end{equation}
At level two, their products translate the
charge by $2$ or $-2$ without changing level, so every even charge can be moved to $0$ and
every odd charge to $1$. Since the norm is preserved, \eqref{eq:N} gives $4-2d+s^2=4-2d'+(s')^2$, which yields the stated formula for $d'$.
\end{proof}

\section{The low-level cases}\label{sec:low}
The principal submatrix of $A$ on nodes $0,1$ is the Cartan matrix of $A_1^{(1)}$, giving an affine subalgebra of $g(A)$.  The action of $A_1^{(1)}$ (also known as $\widehat{\mathfrak{sl}}_2$) on $g(A)$ is level-preserving.  In particular, we may decompose $g(A)$ as
$$g(A) = \bigoplus\limits_{\ell \in \Z} g_{(\ell)}$$
where each $g_{(\ell)}$ is an $A_1^{(1)}$-module.  With our coordinates, the affine central element $h_0+h_1$ acts on $g_{(\ell)}$ by $-\ell$, and $h_1$ acts on charge $s$ by $2s$.  In this section, we obtain characters for levels $1$ and $2$ explicitly. For fixed-level characters, we omit the common factor $t^\ell$.

Real roots have norm $2$ and multiplicity $1$. By Lemma \ref{lem:full-reduction}, every positive null root is Weyl conjugate to $d(\alpha_0+\alpha_1)$ with $d>0$, whose root space belongs to the affine subalgebra $A_1^{(1)}$ and has dimension $1$ \cite[Section 4, p.~115]{FF}. Thus real and null roots attain equality, since $p(0)=p(1)=1$.

\subsection{Level one: the basic character}\label{sec:level1}
The level one module $g_{(1)}$ is the irreducible integrable basic lowest-weight module, generated by $e_{-1}$ \cite{FF}.  We will write $V$ for this module for brevity. Its lowest-weight vector has depth $0$ and eigenvalues $(-1,0)$ under $(h_0,h_1)$.  Write
\begin{equation}\label{eq:partition-product}
 P(q)=\prod_{n\ge1}(1-q^n)^{-1}=\sum_{n\ge0}p(n)q^n.
\end{equation}
The basic character formula gives the multiplicities $p(d-s^2)$, after applying the Chevalley involution to use our positive-root convention.  The character is then
\begin{equation}\label{eq:basic}
v(q,z)=\sum\limits_{d \geq 0} \sum\limits_{s \in \Z} p(d-s^2) q^d z^s =  P(q)\sum_{s\in\Z}q^{s^2}z^s.
\end{equation}
Since $N(1,d,s)=d-s^2$, every level one root attains equality in the bound.

\subsection{Irreducible affine modules and their characters}
The following is a review of standard concepts.  See for example \cite[Chapter 10]{Kac}.

Given integers $k \geq 1$ and $0 \leq j \leq k$, the usual irreducible integrable highest-weight module of $A_1^{(1)}$ has eigenvalues $(k-j,j)$ under $(h_0,h_1)$ on its highest-weight vector.  To match our positive-root convention, write $L_{k,j}$ for its Chevalley transform, the lowest-weight module with eigenvalues $(-(k-j),-j)$ on its lowest-weight vector. The central element $h_0+h_1$ acts by $-k$.

Put the lowest-weight vector at relative depth $0$, and let $L_{k,j}[n,s]$ be the subspace at relative depth $n$ with $h_1$-eigenvalue $2s$. Here $e_0$ increases relative depth by $1$ and $e_1$ preserves it. Define

$$\chi_{k,j} (q,z) = \sum\limits_{n \geq 0} \sum\limits_{s \in j/2 + \Z} \dim L_{k,j} [n,s] q^n z^s.$$

At relative depth $0$, the generators $e_1,f_1,h_1$ give the $(j+1)$-dimensional $\mathfrak{sl}_2$ representation, so
$$[q^0]\chi_{k,j}(q,z)=\sum_{a=0}^{j}z^{-j/2+a}.$$
Thus $\chi_{1,0}=v$ and $[q^0]\chi_{2,2}=z^{-1}+1+z$. A copy of $L_{k,j}$ starting at actual depth $d_0$ has character $q^{d_0}\chi_{k,j}(q,z)$, with the level factor omitted.

The coefficients of $\chi_{k,j}$ can be obtained from the Weyl-Kac character formula.  More details on characters can be found in Appendix \ref{app:characters}.

\subsection{The exact level-two character}
The following level-two construction is due to Feingold and Frenkel \cite[Section 4]{FF}. There is a surjection
$$
 \Lambda^2V\longrightarrow g_{(2)},\qquad u\wedge v\longmapsto[u,v].
$$
If we know the kernel of this surjection, then we can write exact character formulas for $g_{(2)}$.  Indeed, let $R \subset \Lambda^2V$ be the affine submodule generated by
$$w = e_{-1} \wedge [e_0, e_{-1}].$$
This vector represents the Serre relation $[e_{-1},[e_{-1},e_0]]=0$. The Serre relations of levels $0$ and $1$ already define the affine subalgebra and $V$. The remaining ideal in the free Lie algebra on $V$ is generated by $R$. Since bracketing with positive-level elements increases level, its level-two part is exactly $R$. Therefore,
$$g_{(2)} \cong\Lambda^2V/R.$$
The vector $w$ is nonzero in $\Lambda^2V$, has coordinates $(2,1,-1)$, and is killed by $f_0$ and $f_1$ under the diagonal affine action. Its eigenvalues under $(h_0,h_1)$ are $(0,-2)$. Since $V$ is integrable, $e_0,e_1,f_0,f_1$ act locally nilpotently on $\Lambda^2V$. Thus the submodule generated by $w$ is the irreducible lowest-weight module $L_{2,2}$, as identified in \cite[Section 4, pp.~115--116]{FF}. The general integrable-module theory used here is from \cite[Chapter 10]{Kac}.

One important detail is that $w$ has depth $1$, so the relative depth of $L_{2,2}$ is shifted by one. Its actual character is
\begin{equation}\label{eq:relation-character}
                    r(q,z):=\ch R=q\,\chi_{2,2}(q,z).
\end{equation}
By writing an explicit basis, we have
$$
             \ch\Lambda^2V=\frac{v(q,z)^2-v(q^2,z^2)}2.
$$
Therefore, writing $f_2(q,z)=\ch g_{(2)}$, we obtain
\begin{equation}\label{eq:f2}
 f_2(q,z)=\frac{v(q,z)^2-v(q^2,z^2)}2-r(q,z).
\end{equation}
This provides an exact character formula.  Unfortunately, due to alternating signs it is not as trivial to study as the level one formula.  We can however refine this.

\subsection{Level two: the branching identity and its normalization}
A lot of the work in this section originates from \cite[Section 2]{BB} and \cite[Sections 1 and 5.3]{CKMN}.  Let $h_n$ be the number of partitions of $2n+1$ into distinct positive odd parts.  Set
\begin{equation}\label{eq:H}
 H(q)=\sum_{n\ge0}h_nq^n,\qquad A_2(q)=\frac{H(q)-1}{q},
 \qquad Q(q)=\prod_{a\ge1}(1+q^a).
\end{equation}

Since $h_0=1$, the coefficient of $q^n$ in $A_2(q)$ counts partitions of $2n+3$ into distinct positive odd parts. The branching identity states that
$$\ch\Lambda^2V = H(q) r(q,z).$$
This expresses the exterior square of $V$ as a sum of depth-shifted copies of a single affine module, with multiplicities recorded by $H(q)$. The relation submodule $R$ is the copy corresponding to the constant term of $H(q)$. We can write explicitly

$$
	H(q) = 
 \frac{q^{-1/2}}2\left\{
 \prod_{a\ge1}(1+q^{a-1/2})-
 \prod_{a\ge1}(1-q^{a-1/2})\right\}.
$$
One can check that a monomial in this expression selects an odd number of distinct odd integers $2a - 1$ with sum $2n + 1$.

We now specialize to charge $0$ and charge $1$ as per Lemma \ref{lem:level2-charge-reduction}.  This is equivalent to selecting the coefficients of $z^0$ and $z^1$.  Then we have (\cite[eq. (5.24)]{CKMN})
\begin{equation}\label{eq:level2strings}
 [z^1]\chi_{2,2}(q,z)=\frac{E(q)}{Q(q)},\qquad
 [z^0]\chi_{2,2}(q,z)=\frac{O(q)}{Q(q)},
\end{equation}
where $E(q)=\sum_{n\ge0}p(2n)q^n$ and
$O(q)=\sum_{n\ge0}p(2n+1)q^n$.  A derivation in our notation is included in the Appendix.

Since $f_2=(H-1)r$, $H-1=qA_2$ and $r=q\chi_{2,2}$, we have

\begin{equation}\label{eq:level2-root-strings}
 [z^1]f_2(q,z)=q^2 \frac{A_2(q)E(q)}{Q(q)},\qquad
 [z^0]f_2(q,z)=q^2 \frac{A_2(q)O(q)}{Q(q)},
\end{equation}

For $s=0$, $N(2,d,0)=2d-3$. For $s=1$, $N(2,d,1)=2d-4$. The two series therefore cover odd and even values of $N$, respectively. Define
$$m_2(2b)=m(2,b+2,1),\qquad m_2(2b+1)=m(2,b+2,0),\qquad b\geq0,$$
and put

$$M_2(x) = \sum\limits_{N \geq 0} m_2(N)x^N.$$
Removing the common depth shift $q^2$ and substituting $q=x^2$ gives

\begin{equation}\label{eq:level2-parity-strings}
 \sum\limits_{b\geq 0}m_2(2b)x^{2b}= \frac{A_2(x^2)E(x^2)}{Q(x^2)},\qquad
 \sum\limits_{b\geq 0}m_2(2b+1)x^{2b+1}= x\frac{A_2(x^2)O(x^2)}{Q(x^2)}.
\end{equation}
Therefore,
$$M_2(x) = \frac{A_2(x^2)}{Q(x^2)} \left( E(x^2) + x O(x^2) \right) = P(x) \frac{A_2 (x^2)}{Q(x^2)}.$$

Here we have a relation that compares multiplicities of $g_{(2)}$ directly to the partition function. Cancelling factors in the products gives
$$\frac{P(x)}{Q(x^2)} = Q(x) P(x^4)$$
hence $P(x) = Q(x)P(x^4)Q(x^2)$.  Therefore,
$$P(x) - M_2(x) = Q(x)P(x^4)(Q(x^2) - A_2(x^2)).$$

To compare the partition function with the multiplicities, it is therefore enough to compare partitions of $2n+3$ into distinct positive odd parts with partitions of $n$ into distinct positive parts. We have the following lemma:

\begin{lemma}\label{lem:partition-injection}
For every integer $n\geq0$, there is an injection from partitions of $2n+3$ into distinct
positive odd parts to partitions of $n$ into distinct positive parts.
The partitions missing from its image are exactly
those of length $2r$ or $2r+1$, for $r\ge2$, with
$\mu_{r-1}-\mu_r=1$, where the parts $\mu_i$ are in decreasing order.
\end{lemma}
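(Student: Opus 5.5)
The plan is to build the injection explicitly. We sort partitions by their number of parts and shift each part by a fixed amount; then we read off the complement of the image. Let $\lambda=(\lambda_1>\dots>\lambda_k)$ be a partition of $2n+3$ into distinct positive odd parts. A sum of $k$ odd numbers has the parity of $k$, so $k$ is odd; write $k=2r+1$ with $r\ge0$. First set $\nu_i=(\lambda_i-1)/2$. This gives a strictly decreasing sequence of nonnegative integers $\nu_1>\dots>\nu_{2r+1}$, with $\nu_{2r+1}\ge0$ and
\[
 \sum_i\nu_i=\frac{2n+3-(2r+1)}2=n+1-r .
\]
The sum is off from $n$ by $1-r$, and the correction depends on $r$.

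\emph{Case $r=0$.} Here $\lambda=(2n+3)$. Map it to the partition $(n)$ of $n$ with one part, which is the empty partition when $n=0$.

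\emph{Case $r\ge1$.} Define $\mu_i=\nu_i+1$ for $1\le i\le r-1$ and $\mu_i=\nu_i$ for $r\le i\le 2r+1$, and delete $\mu_{2r+1}$ if it is $0$.
\begin{itemize}[leftmargin=2em]
\item The total is now $n+1-r+(r-1)=n$.
\item Strictness is preserved, since the first $r-1$ parts are raised uniformly.
\item The result has length $2r$ or $2r+1$, and for $r\ge2$ its gap satisfies $\mu_{r-1}-\mu_r=\nu_{r-1}-\nu_r+1\ge2$.
\item For $r=1$ no shift occurs, and we simply get $(\nu_1,\nu_2,\nu_3)$ with a possible trailing zero removed.
\end{itemize}

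Injectivity follows because the length of $\mu$ determines $r$: length $2r$ or $2r+1$ for $r\ge1$, and length $\le1$ for $r=0$. Once $r$ is known, the shift can be undone. We recover $\nu$ by subtracting $1$ from the first $r-1$ parts and padding with a zero part if $\mu$ has only $2r$ parts. Then $\lambda_i=2\nu_i+1$.

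The image is described length by length.
\begin{itemize}[leftmargin=2em]
\item Length $\le1$: partitions of $n$ into distinct parts of length $\le1$ are $\{(n)\}$ for $n\ge1$ and the empty partition for $n=0$. In both cases this is exactly the image of the $r=0$ case.
\item Lengths $2$ and $3$: every such partition is hit by the $r=1$ case, because the inverse recipe always yields distinct odd parts summing to $2n+3$.
\item Length $2r$ or $2r+1$ with $r\ge2$: a distinct-part partition $\mu$ lies in the image if and only if the inverse recipe produces a strictly decreasing $\nu$. That is, we need $\mu_{r-1}-1>\mu_r$, i.e.\ $\mu_{r-1}-\mu_r\ge2$. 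Since $\mu$ has distinct parts, $\mu_{r-1}-\mu_r\ge1$ always, so the partitions missing from the image are exactly those with $\mu_{r-1}-\mu_r=1$, which is the statement.
\end{itemize}

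The main point needing care is the bookkeeping of the zero part and the length classes. One must check that the trailing part $\nu_{2r+1}=0$ is the only part that can vanish; this holds because the parts are strictly decreasing and nonnegative. It then follows that the length classes $\{2r,2r+1\}$ for distinct $r$ are disjoint and exhaust all lengths $\ge2$, so no two cases collide. One should also verify the edge case $r=1$, where the shift is vacuous, and the case $n=0$, where the $r=0$ image is the empty partition.
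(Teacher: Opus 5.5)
Your proposal is correct and follows essentially the same construction as the paper. Both arguments use the same steps:
\begin{itemize}
\item halve $\lambda_i-1$;
\item add $1$ to the first $r-1$ entries and drop a trailing zero;
\item invert by reading $r$ off the length, with the image characterized by $\mu_{r-1}-\mu_r\ge2$ for $r\ge2$.
\end{itemize}
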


\begin{proof}
Begin with a partition $(a_1,\ldots,a_k)$ of $2n+3$ into distinct positive odd parts, in decreasing order. Its length $k=2r+1$ must be odd. If $r=0$, send it to $(n)$, interpreted as the empty partition when $n=0$.

If $r\geq1$, subtract $1$ from each $a_i$ and divide by $2$. This gives a decreasing sequence $(b_1,\ldots,b_{2r+1})$ of distinct nonnegative integers with sum $n+1-r$. Add $1$ to the first $r-1$ entries, and remove a terminal zero if present. This gives a partition of $n$ into $2r$ or $2r+1$ distinct positive parts.

To invert the construction, start from a partition $(\mu_1,\ldots,\mu_k)$ of $n$ into distinct positive parts. Lengths $0$ and $1$ are covered by the $r=0$ case. Otherwise, the length determines $r\geq1$ by $k=2r$ or $k=2r+1$. Append $0$ when the length is even, then subtract $1$ from the first $r-1$ entries. For $r=1$ there is nothing to subtract. For $r\geq2$, the entries remain distinct exactly when $\mu_{r-1}-\mu_r\geq2$. Doubling all entries and adding $1$ then uniquely recovers the partition of $2n+3$ into distinct positive odd parts. This proves injectivity and the stated description of the missing partitions.
\end{proof}

The lemma proves $Q(q)\succeq A_2(q)$ and hence the level-two bound. To describe the difference explicitly, put $(q;q)_L=\prod_{j=1}^{L}(1-q^j)$. Partitions into $L$ distinct positive parts have generating function $q^{L(L+1)/2}/(q;q)_L$. Requiring $\mu_{r-1}-\mu_r=1$ fixes the extra gap at that position to zero, removing the factor $(1-q^{r-1})^{-1}$. Therefore,

\begin{equation}\label{eq:partition-defect}
 Q(q)-A_2(q)=\sum_{r\ge2}(1-q^{r-1})
 \left\{\frac{q^{r(2r+1)}}{(q;q)_{2r}}+
        \frac{q^{(r+1)(2r+1)}}{(q;q)_{2r+1}}\right\}.
\end{equation}

The first missing partition is $(4,3,2,1)$, of weight $10$, so $Q(q)-A_2(q)$ begins with $q^{10}$. Thus $P(x)-M_2(x)$ vanishes through degree $19$. Since $Q(x)P(x^4)$ has a positive coefficient in every nonnegative degree, the coefficient of $x^{20}$ in $Q(x^2)-A_2(x^2)$ makes the difference strictly positive in every degree $N\geq20$. This proves that level-two equality occurs exactly for $0\leq N\leq19$.

\section{The Free Lie bound}\label{sec:analytical}\label{sec:tails}

So far we have been able to write explicit character formulas for levels $1$ and $2$.  At higher levels, formulas get much more complicated, and we cannot do so as easily.  To establish a comparison with the partition function, we will instead use estimates.  Our comparison uses an explicit theorem of Banerjee--Paule--Radu--Zeng \cite[Theorem 1.3]{BPRZ}:
\begin{equation}\label{eq:partition-bound}
 p(n)>\frac{e^{\Aconst\sqrt n}}{4\sqrt3\,n}
        \left(1-\frac1{2\sqrt n}\right),\qquad n\ge1,
\end{equation}
where $\Aconst = \pi \sqrt{2/3}$. We will bound $m(\ell,d,s)$ above by the right hand side with $n=N(\ell,d,s)$ when $\ell\geq3$, $d\geq165\ell$ and $|s|\leq\ell/2$, proving the strict bound in this region.

\subsection{A bound from the free Lie algebra}
Since $\nn=\bigoplus_{\ell>0} g_{(\ell)}$ is generated by its level one module $V$, there is a graded surjection from the free Lie algebra $L(V)$ on $V$ to $\nn$.  This gives an upper bound on root multiplicities. For a positive integer $\ell$ and an integer $s$, define
\begin{equation}\label{eq:theta-ell}
 \Theta_{\ell,s}(q)=
       \sum_{\substack{a_1,\ldots,a_\ell\in\Z\\a_1+\cdots+a_\ell=s}}q^{a_1^2+\cdots+a_\ell^2}.
\end{equation}

\begin{lemma}\label{lem:free-lie}
For every positive integer $\ell$, nonnegative integer $d$ and integer $s$,
\begin{equation}\label{eq:free-upper}
 m(\ell,d,s)\le\frac1\ell[q^d]P(q)^\ell\Theta_{\ell,s}(q).
\end{equation}
\end{lemma}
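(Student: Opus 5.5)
The plan is to bound $m(\ell,d,s)$ by the dimension of the corresponding graded piece of the free Lie algebra $L(V)$, and then bound that dimension by a crude count. Since $\nn$ is generated by $V=g_{(1)}$, the surjection $L(V)\to\nn$ of Section~\ref{sec:analytical} is graded. It sends the level-$\ell$ component $L_\ell(V)$, spanned by $\ell$-fold brackets of elements of $V$, onto $g_{(\ell)}$ and respects the $(d,s)$-grading. Hence $m(\ell,d,s)\le\dim L_\ell(V)_{d,s}$, and it suffices to show
\[
 \ell\,\ch L_\ell(V)\preceq \ch V^{\otimes\ell}=v(q,z)^\ell,
\]
where $v$ is the basic character \eqref{eq:basic}.

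\textbf{Reducing to tensor powers.} I would use the Dynkin--Specht--Wever idempotent. The map $\theta:V^{\otimes\ell}\to L_\ell(V)$, $x_1\otimes\cdots\otimes x_\ell\mapsto[x_1,[x_2,\ldots,x_\ell]\ldots]$, is surjective and grading-preserving. The composite $\iota\circ\theta$ with the inclusion $\iota:L_\ell(V)\hookrightarrow T(V)$ acts on $L_\ell(V)$ as multiplication by $\ell$.

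To get the factor $1/\ell$, use the cyclic group action. Let $\sigma$ be the cyclic shift on $V^{\otimes\ell}$. It is known that $L_\ell(V)$ embeds into $V^{\otimes\ell}$ isomorphically onto a complement of the image of $1-\sigma$, or equivalently maps injectively into the coinvariants $(V^{\otimes\ell})_{\Z/\ell}$. Indeed, the classical Witt/Klyachko argument gives $\dim L_\ell(V)_\lambda\le\dim (V^{\otimes\ell})^{\Z/\ell}_\lambda$ degreewise.

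A cleaner alternative uses the generalized Witt formula (Kang--Kim \cite{KK96}) for each multidegree:
\[
 \dim L_\ell(V)_{\lambda}=\frac1\ell\sum_{k\mid \ell}\mu(k)\,[\text{coefficient}]\, v(q^k,z^k)^{\ell/k}.
\]
Here the $k=1$ term is $\frac1\ell[q^dz^s]v^\ell$. The other terms count necklaces with period dividing $\ell/k$. The Möbius sum equals the number of primitive (aperiodic) necklaces of total degree $(d,s)$ built from a homogeneous basis of $V$, with colours taken from that basis. This number is at most the number of all words divided by $\ell$, since each aperiodic necklace has exactly $\ell$ distinct rotations. This is the step I would actually write: pick a homogeneous basis of $V$, note that Lyndon words in this basis give a basis of $L(V)$, and count Lyndon words of length $\ell$ and degree $(d,s)$. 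Each Lyndon word has $\ell$ distinct rotations, all of the same degree, and distinct Lyndon words have disjoint rotation classes. Hence $\#\{\text{Lyndon words}\}\le\frac1\ell\#\{\text{words}\}$.

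\textbf{Computing the count of words.} The number of words is
\[
 [q^dz^s]\,v^\ell=[q^dz^s]\,P(q)^\ell\Bigl(\sum_a q^{a^2}z^a\Bigr)^\ell=[q^d]\,P(q)^\ell\,\Theta_{\ell,s}(q),
\]
by \eqref{eq:basic} and \eqref{eq:theta-ell}. This gives \eqref{eq:free-upper}.

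The main care point is finiteness of graded pieces. Each $V_{d,s}$ is finite-dimensional, and since $d\ge s^2\ge0$ there are finitely many words of a given total degree, so the Lyndon basis count is legitimate degreewise.
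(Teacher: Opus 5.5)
Your committed argument is correct and proves the lemma, but by a different route from the paper's. You take the Lyndon basis in a homogeneous basis of $V$ and observe that each Lyndon word of length $\ell$ has $\ell$ distinct rotations of the same degree, with disjoint rotation classes. The finiteness check you give is the right one.

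Delete the paragraph on the cyclic shift, because its claims are false. For $\ell=2$, $L_2(V)$ is spanned by $x\otimes y-y\otimes x=(1-\sigma)(x\otimes y)$, so it \emph{is} the image of $1-\sigma$ rather than a complement to it. More generally, every Lie element of degree $\ge2$ is a sum of terms $ab-ba$, and these vanish in the cyclic coinvariants, so the map to coinvariants is zero, not injective. The true inequality $\dim L_\ell(V)_\lambda\le\dim(V^{\otimes\ell})^{\Z/\ell}_\lambda$ does not help either: the number of necklaces is at least, not at most, the number of words divided by $\ell$.

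\textbf{Comparison with the paper.} The paper never chooses a basis of $L(V)$. It writes $\ch U(L(V))=\ch T(V)=(1-tv)^{-1}$ and takes logarithms both of this and of the PBW product $\prod(1-t^jq^dz^s)^{-b_{j,d,s}}$. Comparing the two gives
\[
 \frac{v^\ell}{\ell}=b_\ell(q,z)+\sum_{k\mid\ell,\ k\ge2}\frac1k\,b_{\ell/k}(q^k,z^k),
\]
and the paper then drops the nonnegative terms with $k\ge2$.

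Multiplied by $\ell$, this identity is exactly your orbit decomposition. A word of length $\ell$ is uniquely $w^k$ with $w$ primitive of length $\ell/k$. Its rotation class has $\ell/k$ elements and contains exactly one $k$th power of a Lyndon word. You discard the periodic words; the paper discards the same $k\ge2$ terms. The two proofs are therefore the bijective and generating-function versions of one count.

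The paper's version needs only PBW and $U(L(V))\cong T(V)$. More importantly, it is the mechanism reused in Theorem~\ref{thm:odd}, where $1-tv$ is replaced by the truncated Euler characteristic $D_m$ and no word model is available. Your version is more explicit and elementary for this lemma. It relies on the Lyndon basis theorem over an infinite totally ordered alphabet, which is valid since any element involves only finitely many letters.
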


\begin{proof}
Let $L_\ell(V)$ be the part of $L(V)$ of bracket length $\ell$, and write $b_\ell(q,z)=\ch L_\ell(V)$, with the level factor omitted. By the surjection, we have
$$m(\ell,d,s) \leq [q^d z^s] b_\ell(q,z).$$
In the tensor algebra, we already have
$$b_\ell(q,z) \preceq v(q,z)^\ell$$
but this is not strong enough.  We require the factor $\frac{1}{\ell}$. To achieve this, use $U(L(V))\cong T(V)$ to write
$$\ch U(L(V)) = \sum\limits_{j \geq 0} t^j v(q,z)^j = \frac{1}{1 - t v(q,z)} = (1-tv)^{-1}.$$
We now use the logarithmic extraction from a PBW product identity employed
by Bauer and Bernard \cite[Section 1.2, equations (7)--(17)]{BB}.
$$\log \ch U(L(V)) = - \log(1 - tv) = \sum\limits_{\ell \geq 1} \frac{t^{\ell} v^{\ell}}{\ell}$$
hence $$[t^{\ell}] \log \ch U(L(V)) = \frac{v(q,z)^\ell}{\ell}.$$
On the other hand, if $b_{j,d,s} = \dim L_j(V)_{d,s}$, we can use the PBW theorem with an ordered homogeneous basis to write an explicit character for $U(L(V))$.  Its finite ordered monomials
$$x_1^{n_1} x_2^{n_2} ... x_r^{n_r}$$
form a basis of $U(L(V))$. The powers of a basis element $x$ of degree $(j,d,s)$ contribute the factor
$$1 + t^j q^d z^s + t^{2j} q^{2d} z^{2s} + ... = \frac{1}{1 - t^j q^d z^s}.$$
Since we have $b_{j,d,s}$ of these terms, we then have
$$\ch U(L(V)) = \prod\limits_{j \geq 1} \prod\limits_{d,s} (1 - t^j q^d z^s)^{- b_{j,d,s}}.$$
Now again take logarithms :
\begin{align*}
\log \ch U(L(V)) &=  \sum\limits_{j,d,s} b_{j,d,s} (- \log (1 - t^j q^d z^s))\\
&= \sum\limits_{j,d,s} b_{j,d,s} \sum\limits_{k \geq 1} \frac{t^{jk} q^{dk} z^{sk}}{k} \\
&= \sum\limits_{j \geq 1} \sum\limits_{k \geq 1} \frac{t^{jk}}{k} b_j(q^k, z^k).
\end{align*}
The coefficient at level $\ell$ is
\begin{align*}
[t^\ell] \log \ch U(L(V)) &= \sum\limits_{k | \ell} \frac{1}{k} b_{\ell / k} (q^k, z^k) \\
&= b_\ell (q,z) + \sum_{\substack{k | \ell \\ k \geq 2}} \frac{1}{k} b_{\ell / k} (q^k, z^k).
\end{align*}
Since all coefficients are nonnegative, comparing the two expressions gives
$$b_\ell(q,z) \preceq \frac{1}{\ell} v(q,z)^\ell.$$
Finally, \eqref{eq:basic} gives $[z^s]v(q,z)^\ell=P(q)^\ell\Theta_{\ell,s}(q)$, proving the lemma.
\end{proof}

We now estimate the coefficient in the lemma analytically. Fix $0<r<1$. Both $P(q)$ and $\Theta_{\ell,s}(q)$ converge absolutely for $|q|<1$. Using Cauchy's formula on $q=re^{i\theta}$, the triangle inequality and $|\Theta_{\ell,s}(re^{i\theta})|\leq\Theta_{\ell,s}(r)$, we have
\begin{align*}
\frac1\ell[q^d]P(q)^\ell\Theta_{\ell,s}(q) &= \frac{1}{2 \ell \pi i} \oint_{|q|=r} \frac{P(q)^\ell \Theta_{\ell,s}(q)}{q^{d+1}} dq \\
&= \frac{r^{-d}}{2 \ell \pi} \int_{- \pi}^{\pi} P(r e^{i \theta})^\ell \Theta_{\ell,s}(r e^{i \theta}) e^{- i d \theta} d \theta \\
&\leq r^{-d} \Theta_{\ell,s}(r) \frac{1}{2 \ell \pi} \int_{-\pi}^{\pi} |P(r e^{i \theta}) |^\ell d \theta \\
&= r^{-d} P(r)^\ell \Theta_{\ell,s}(r) \frac{1}{2 \ell \pi} \int_{-\pi}^{\pi} \left| \frac{P(r e^{i \theta})}{P(r)} \right|^\ell d \theta.
\end{align*}
For $\beta>0$, set
$$
 I_\ell(\beta)=\frac1{2\pi}\int_{-\pi}^{\pi}
       \left|\frac{P(re^{i\theta})}{P(r)}\right|^\ell d\theta, \qquad r = e^{-\beta}
$$
then
\begin{equation}\label{eq:analytic-cauchy}
 m(\ell,d,s)\le\frac{e^{\beta d}}\ell
 P(e^{-\beta})^\ell\Theta_{\ell,s}(e^{-\beta})I_\ell(\beta).
\end{equation}

The following three subsections give bounds on these terms, valid for $\ell \geq 3$ and $0 < \beta \leq 0.1$.  Here $\beta$ is a free parameter that determines the radius of the integration circle. The following function will appear:
\begin{equation}\label{eq:theta-remainder}
 R(\beta)=1+\frac{2u_\beta}{1-u_\beta},
 \qquad u_\beta=e^{-\pi^2/(2\beta)}.
\end{equation}

\subsection{The Gaussian bound}\label{sec:gaussian-bound}
\begin{lemma}[Gaussian bound]\label{lem:gaussian-bound}
For every $\beta>0$, positive integer $\ell$ and integer $s$, with
$R(\beta)$ as in \eqref{eq:theta-remainder},
\begin{equation} \label{eq:theta-uniform}
 \Theta_{\ell,s}(e^{-\beta})\le
 \frac{e^{-\beta s^2/\ell}}{\sqrt\ell}
 \left(\frac\pi\beta\right)^{(\ell-1)/2}R(\beta)^{\ell-1}.
\end{equation}
\end{lemma}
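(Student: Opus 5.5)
The plan is to prove \eqref{eq:theta-uniform} by induction on $\ell$. Each inductive step peels off one summation variable and handles it with a one-dimensional Poisson summation. The base case $\ell=1$ is immediate: $\Theta_{1,s}(e^{-\beta})=e^{-\beta s^2}$, which is exactly the right-hand side because all factors with exponent $\ell-1=0$ equal $1$.

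For $\ell\ge2$, isolate the last coordinate $a=a_\ell$ in \eqref{eq:theta-ell}. This gives the recursion
\[
 \Theta_{\ell,s}(q)=\sum_{a\in\Z}q^{a^2}\,\Theta_{\ell-1,s-a}(q).
\]
Every term is nonnegative at $q=e^{-\beta}$, so I will insert the inductive hypothesis for $\Theta_{\ell-1,s-a}$. The exponent that appears is $a^2+(s-a)^2/(\ell-1)$. Completing the square gives the identity
\[
 a^2+\frac{(s-a)^2}{\ell-1}=\frac{\ell}{\ell-1}\Bigl(a-\frac s\ell\Bigr)^2+\frac{s^2}{\ell},
\]
so the Gaussian factor $e^{-\beta s^2/\ell}$ comes out of the sum. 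What remains is
\[
 \Theta_{\ell,s}(e^{-\beta})\le \frac{e^{-\beta s^2/\ell}}{\sqrt{\ell-1}}\Bigl(\frac\pi\beta\Bigr)^{(\ell-2)/2}R(\beta)^{\ell-2}\sum_{a\in\Z}e^{-\gamma(a-c)^2},
 \qquad \gamma=\frac{\beta\ell}{\ell-1},\quad c=\frac s\ell .
\]

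The key step is the one-dimensional bound
\[
 \sum_{a\in\Z}e^{-\gamma(a-c)^2}\le\sqrt{\pi/\gamma}\,R(\beta),
\]
uniformly in the shift $c$. By Poisson summation,
\[
 \sum_{a\in\Z}e^{-\gamma(a-c)^2}=\sqrt{\pi/\gamma}\sum_{m\in\Z}e^{-\pi^2m^2/\gamma}e^{2\pi imc}.
\]
Taking absolute values bounds the right-hand side by $\sqrt{\pi/\gamma}\sum_m e^{-\pi^2m^2/\gamma}$. Since $\ell\ge2$, we have $\gamma\le2\beta$, hence $e^{-\pi^2m^2/\gamma}\le u_\beta^{m^2}\le u_\beta^{|m|}$. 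Summing the geometric series gives $\sum_m u_\beta^{|m|}=1+2u_\beta/(1-u_\beta)=R(\beta)$.

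Finally, $\sqrt{\pi/\gamma}=\sqrt{(\ell-1)/\ell}\,\sqrt{\pi/\beta}$. Combined with the prefactor $1/\sqrt{\ell-1}$, this produces exactly $1/\sqrt\ell$ and raises the powers of $(\pi/\beta)^{1/2}$ and $R(\beta)$ by one each, closing the induction.

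The only delicate point is the uniformity of the Poisson bound in the shift $c$, together with the passage from $u_\beta^{m^2}$ to the geometric series. Both are handled by the triangle inequality and the crude estimate $\gamma\le2\beta$, which is precisely what the definition of $u_\beta$ with $\pi^2/(2\beta)$ accommodates. A direct $(\ell-1)$-dimensional Poisson summation over the coset of the $A_{\ell-1}$ lattice would also yield the factor $1/\sqrt\ell$, as the covolume of that lattice. However, bounding its dual theta series by $R(\beta)^{\ell-1}$ is awkward, so the inductive one-variable route is the one I would follow.
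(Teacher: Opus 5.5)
Your proposal is correct and follows the paper's proof essentially step for step. Both arguments induct on $\ell$ over all charges at once and peel off one coordinate via $\Theta_{\ell,s}=\sum_a q^{a^2}\Theta_{\ell-1,s-a}$. Both complete the square to extract $e^{-\beta s^2/\ell}$, then apply the shifted one-dimensional Poisson bound $\sum_{a}e^{-\gamma(a-c)^2}\le\sqrt{\pi/\gamma}\,R(\beta)$, using $\gamma=\beta\ell/(\ell-1)\le2\beta$ and $m^2\ge|m|$. Finally, both observe that $\sqrt{(\ell-1)/\ell}$ combines with $1/\sqrt{\ell-1}$ to give $1/\sqrt{\ell}$.
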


\begin{proof}
The one-dimensional Gaussian Poisson identity \cite{DLMFtheta} is
\[
 \sum_{n\in\Z}e^{-a(n-h)^2}
 =\sqrt{\frac\pi a}\sum_{k\in\Z}e^{-\pi^2k^2/a}e^{2\pi ikh}
 \qquad(a>0,\ h\in\mathbb R).
\]
Taking absolute values and using $0<a\leq2\beta$ and $k^2\geq k$ for $k\geq1$, we have $e^{-\pi^2k^2/a}\leq u_\beta^k$. Summing the geometric series gives
\begin{equation}\label{eq:shifted-gaussian}
 \sum_{n\in\Z}e^{-a(n-h)^2}
 \le\sqrt{\frac\pi a}
       \left(1+2\sum_{k\ge1}e^{-\pi^2k^2/a}\right)
 \le\sqrt{\frac\pi a}\,R(\beta).
\end{equation}
The proof now proceeds by induction on $\ell$, for all integer charges simultaneously. For $\ell=1$, the claimed bound is equality:
$$\Theta_{1,s}(e^{-\beta}) = e^{-\beta s^2}.$$
For $\ell\geq2$, applying the induction hypothesis at charge $s-n$ gives
\begin{align*}
 \Theta_{\ell,s}(e^{-\beta})
 &=\sum_{n\in\Z}e^{-\beta n^2}\Theta_{\ell-1,s-n}(e^{-\beta}) \\ &\leq \sum\limits_{n \in \Z} e^{-\beta n^2} \frac{e^{-\beta (s-n)^2/(\ell-1)}}{\sqrt{\ell-1}}
 \left(\frac\pi\beta\right)^{(\ell-2)/2}R(\beta)^{\ell-2} \\
 &=  \frac{R(\beta)^{\ell-2}}{\sqrt{\ell-1}}
 \left(\frac\pi\beta\right)^{(\ell-2)/2} \sum\limits_{n \in \Z} e^{-\beta\left(n^2+\frac{(s-n)^2}{\ell-1}\right)}.
\end{align*}
We can rewrite by a simple rearrangement
\[
 n^2+\frac{(s-n)^2}{\ell-1}
 =\frac{\ell}{\ell-1}\left(n-\frac s\ell\right)^2
       +\frac{s^2}{\ell}
\]
then use \eqref{eq:shifted-gaussian} with $a=\beta\ell/(\ell-1)\leq2\beta$ and $h=s/\ell$ to obtain
\[
 \frac{R(\beta)^{\ell-2}}{\sqrt{\ell-1}}
 \left(\frac\pi\beta\right)^{(\ell-2)/2}
 e^{-\beta s^2/\ell}
 \sqrt{\frac{\pi(\ell-1)}{\beta\ell}}\,R(\beta) =  \frac{e^{-\beta s^2/\ell}}{\sqrt\ell}
 \left(\frac\pi\beta\right)^{(\ell-1)/2}R(\beta)^{\ell-1},
\]
as desired.
\end{proof}

\subsection{The circle bound}\label{sec:circle-bound}
\begin{lemma}[Circle bound]\label{lem:circle-bound}
For every integer $\ell\geq3$ and $0<\beta\leq1/10$, with $r=e^{-\beta}$,
\begin{equation}\label{eq:circle-final}
 I_\ell(\beta) = \frac1{2\pi}\int_{-\pi}^{\pi}
       \left|\frac{P(re^{i\theta})}{P(r)}\right|^\ell d\theta \le\frac{1.01\,\beta^{3/2}}{\sqrt{3\pi\ell}}.
\end{equation}
\end{lemma}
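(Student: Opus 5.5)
The plan is to write the integrand as the exponential of a sum of nonpositive terms and compare it with a Gaussian in $\theta$. Since $\log P(q)=\sum_{m\ge1}\frac{\sigma(m)}{m}q^m$, where $\sigma(m)=\sum_{k\mid m}k$ is the divisor sum, taking real parts at $q=re^{i\theta}$ gives
\[
 \log\left|\frac{P(re^{i\theta})}{P(r)}\right|
 =-\sum_{m\ge1}\frac{\sigma(m)}{m}\,r^m\bigl(1-\cos m\theta\bigr)=:-\Phi_\beta(\theta).
\]
Every term of $\Phi_\beta$ is nonnegative, so we may discard terms freely to get lower bounds on $\Phi_\beta$. The target constant shows how much room there is. The bound $1.01\beta^{3/2}/\sqrt{3\pi\ell}$ equals $\frac1{2\pi}\int e^{-\ell c\theta^2/\beta^3}\,d\theta$ with $c=3/4$, up to the factor $1.01$. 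The true quadratic coefficient is $\frac12\sum_m m\sigma(m)r^m\sim\pi^2/(6\beta^3)$, and $\pi^2/6>3/4$, so we only need a rather crude lower bound $\Phi_\beta(\theta)\ge c\,\theta^2/\beta^3$ on a major arc, plus exponential smallness elsewhere.

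\emph{Major arc} $|\theta|\le\theta_0\beta$, for a fixed constant $\theta_0$ (say $\theta_0=\pi$). Use $1-\cos x\ge\frac{2}{\pi^2}x^2$ for $|x|\le\pi$, keep only the terms with $m|\theta|\le\pi$, and use $\sigma(m)/m\ge1$. One could instead keep $\sigma(m)/m$ through its average; if the constant comes out too small, restrict to $m$ divisible by small $k$ to recover part of $\pi^2/6$. This yields
\[
 \Phi_\beta(\theta)\ge\frac{2\theta^2}{\pi^2}\sum_{m\le\pi/|\theta|}m^2e^{-\beta m}.
\]
The sum is compared with $\int_0^{\pi/|\theta|}x^2e^{-\beta x}dx$, an incomplete gamma function. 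Since $\pi/|\theta|\ge\pi/(\theta_0\beta)$, this is at least $\kappa\cdot2/\beta^3$ for an explicit $\kappa<1$, after an Euler--Maclaurin error of relative size $O(\beta)$ absorbed using $\beta\le1/10$. One must check that $4\kappa/\pi^2\ge3/4$. If not, I would first retain the full Taylor expansion near $\theta=0$, where $1-\cos x\ge x^2/2-x^4/24$, on a smaller arc $|\theta|\le\epsilon\beta$, and handle $\epsilon\beta\le|\theta|\le\pi$ separately. The Gaussian integral over the major arc then gives $\le\beta^{3/2}/\sqrt{3\pi\ell}$ exactly, with slack.

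\emph{Minor arc} $\theta_0\beta\le|\theta|\le\pi$. Keep only the $k=1$ part of $\sigma(m)/m$, i.e.\ replace $\sigma(m)/m$ by $1$, and sum the geometric series:
\[
 \Phi_\beta(\theta)\ge\sum_m r^m(1-\cos m\theta)
 =\frac{r}{1-r}-\operatorname{Re}\frac{re^{i\theta}}{1-re^{i\theta}}.
\]
For $|\theta|\ge\theta_0\beta$ this is $\ge\delta/\beta$ for an explicit $\delta>0$, since $|1-re^{i\theta}|\gtrsim|\theta|$. Hence the minor arc contributes at most $e^{-\ell\delta/\beta}\le e^{-30\delta/\beta}$, using $\ell\ge3$. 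This is negligible against $0.01\,\beta^{3/2}/\sqrt{3\pi\ell}$ uniformly for $\beta\le1/10$. The dependence on $\ell$ is harmless because $e^{-\ell\delta/\beta}\sqrt\ell$ is decreasing in $\ell$ once $\ell\ge3$.

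The main obstacle is making the constants fit: choosing $\theta_0$ and the truncation so that the major-arc quadratic coefficient is at least $3/(4\beta^3)$ while $\delta$ on the minor arc stays large enough that the tail fits inside the $1\%$ margin at the worst case $\beta=1/10$, $\ell=3$. If a single split does not work, I would use a three-piece split (tiny arc, intermediate arc, far arc) with a separate elementary bound on each, checked by explicit numerics at $\beta=1/10$ together with monotonicity in $\beta$.
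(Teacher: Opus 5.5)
Your architecture is the paper's: a nonnegative series for $\log P(r)-\log|P(re^{i\theta})|$, a Gaussian bound with coefficient $3/(4\beta^3)$ on an inner arc, a flat bound $\delta/\beta$ on the rest of the circle, and a ratio check at $\ell=3$, $\beta=1/10$. But what you call the main obstacle is the whole content of the lemma, and the estimates you actually propose cannot overcome it.

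\emph{The major-arc bound fails.} Using $1-\cos x\ge 2x^2/\pi^2$ cannot work. Your condition $4\kappa/\pi^2\ge3/4$ needs $\kappa\ge3\pi^2/16>1$. Even with the full weight $\sigma(m)/m$, the coefficient as $\theta\to0$ would only be $\frac{2}{\pi^2}\cdot\frac{\pi^2}{3\beta^3}=\frac{2}{3\beta^3}<\frac{3}{4\beta^3}$. So recovering part of $\pi^2/6$ does not help inside that framework.

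\emph{The $k=1$ term alone is not enough.} Suppose you use sharp constants but keep only $\sigma(m)/m\ge1$. Then your lower bound is $S(\beta,\theta)\approx\theta^2/(\beta(\beta^2+\theta^2))$, so the coefficient $3/4$ survives only for $|\theta|\lesssim\beta/\sqrt3$ (only $|\theta|\le\beta/2$ with your termwise Taylor bound). Beyond that split the flat bound is about $1/(4\beta)$. The tail ratio $\sqrt{3\pi\ell}\,\beta^{-3/2}e^{-\ell\delta/\beta}$ at $\ell=3$, $\beta=1/10$ is then about $0.09$. The $1\%$ margin needs $\delta\gtrsim0.32$, i.e.\ a split near $0.7\beta$ or later, where $k=1$ alone gives a coefficient below $0.7$. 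So no two-piece split works with only $k=1$ information, and your three-piece fallback is never carried out.

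The missing ingredient is to keep the $k=2$ term on the inner arc. You should also use the closed form of $S(x,y)=\sum_{n\ge1}e^{-nx}(1-\cos ny)$ instead of truncating at $m\le\pi/|\theta|$ and invoking Euler--Maclaurin. The paper takes $J_\beta(\theta)=S(\beta,\theta)+\frac12S(2\beta,2\theta)$ and uses
$1-\cos y\ge0.499y^2$, $2(1-\cos y)\le y^2$, $e^{-\beta}\ge0.9$, $e^{-2\beta}\ge0.8$, and $\coth(x/2)\ge2/x$.
This gives $J_\beta\ge\frac{0.998(0.9+0.8/4)}{1+4/9}\frac{\theta^2}{\beta^3}\ge\frac{3\theta^2}{4\beta^3}$ on $|\theta|\le2\beta/3$; the $k=2$ term supplies exactly the missing margin. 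Since $S$ is increasing in $1-\cos y$, this yields $J_\beta\ge1/(3\beta)$ on $2\beta/3\le|\theta|\le\pi/2$. For $|\theta|\ge\pi/2$, the bound $S(\beta,\pi/2)\ge0.9/\beta$ covers the rest. Thus $\delta=1/3$, and by monotonicity in $\ell$ and $\beta$ the tail ratio is at most $30\sqrt{10\pi}\,e^{-10}<0.0077$.
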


\begin{proof}
Write
$$\log P(q) = \sum\limits_{n,k \geq 1} \frac{q^{nk}}{k}$$
then at $q = r e^{i \theta}$ we have, from the real part of $\log P(q)$
$$\log | P(r e^{i \theta}) | = \sum\limits_{n,k \geq 1} \frac{e^{-n k \beta}}{k} \cos(n k \theta)$$
therefore
$$\log P(r) - \log | P(r e^{i \theta}) | = \sum\limits_{n,k \geq 1} \frac{e^{-n k \beta}}{k} (1 - \cos(n k \theta)).$$
Each summand is nonnegative, so retaining only $k=1,2$ gives a lower bound on the logarithmic difference. Define, for $x>0$ and real $y$,
$$S(x,y) = \sum\limits_{n \geq 1} e^{-nx}(1 - \cos (ny)),$$
then
$$\log P(r) - \log | P(r e^{i \theta}) | \geq S(\beta, \theta) + \frac{1}{2} S(2 \beta, 2 \theta) := J_{\beta}(\theta).$$
From there, we have
$$ \left|\frac{P(re^{i\theta})}{P(r)}\right|^\ell \leq e^{- \ell J_\beta (\theta)}$$
so if we know something about $J_\beta$ then we can bound the integral.

We first establish the following two-region damping estimate for
$0<\beta\le1/10$:
\begin{equation}\label{eq:two-regions}
 J_\beta(\theta)\ge
 \begin{cases}
 \displaystyle\frac{3\theta^2}{4\beta^3},&|\theta|\le2\beta/3,\\[3pt]
 \displaystyle\frac1{3\beta},&2\beta/3\le|\theta|\le\pi.
 \end{cases}
\end{equation}
An explicit formula for $S(x,y)$ is
\begin{equation}\label{eq:S}
 S(x,y)=\frac{\coth(x/2)e^{-x}(1-\cos y)}
 {(1-e^{-x})^2+2e^{-x}(1-\cos y)}.
\end{equation}
This formula follows by summing the geometric series for $\sum_{n\geq1}e^{-nx}$ and $\sum_{n\geq1}e^{-nx+iny}$, then taking the real part of the latter.

Then it is enough to bound the numerator and the denominators in each region.  

If $|\theta| \leq 2 \beta / 3$ then we have $| \theta |, | 2 \theta | \leq 2/15$.
Taylor's inequality gives
$1-\cos y\geq y^2/2-y^4/24\geq0.499y^2$ and $2(1-\cos y)\leq y^2$. We also use $e^{-\beta}\geq0.9$, $e^{-2\beta}\geq0.8$, $1-e^{-x}\leq x$ and $\coth(x/2)\geq2/x$. Substituting these bounds into \eqref{eq:S} gives
\begin{align*}
J_\beta(\theta)
&\geq\frac{0.998(0.9+0.8/4)\theta^2}{\beta(\beta^2+\theta^2)}\\
&\geq\frac{0.998(0.9+0.8/4)}{1+4/9}\frac{\theta^2}{\beta^3}
\geq\frac{3\theta^2}{4\beta^3}.
\end{align*}

Next assume $2\beta/3\leq|\theta|\leq\pi$. At the shared endpoint, the first-region estimate gives
$$J_\beta(2\beta/3)\geq\frac{3}{4\beta^3}\left(\frac{2\beta}{3}\right)^2=\frac{1}{3\beta}.$$
The expression \eqref{eq:S} is increasing in $1-\cos y$, so $S(x,y)$ is nondecreasing for $0\leq y\leq\pi$. Thus both $S(\beta,\theta)$ and $S(2\beta,2\theta)$ are nondecreasing in $|\theta|$ up to $\pi/2$, proving the bound on $2\beta/3\leq|\theta|\leq\pi/2$. For $\pi/2\leq|\theta|\leq\pi$, discard the second summand and use
$$J_\beta (\theta) \geq S(\beta, \theta) \geq S(\beta, \pi / 2) = \frac{\coth(\beta / 2) e^{-\beta}}{1 + e^{-2 \beta}} \geq \frac{e^{- \beta}}{\beta} \geq \frac{0.9}{\beta} > \frac{1}{3 \beta}.$$
This establishes \eqref{eq:two-regions}.

We may now integrate the bound to obtain
\begin{align*}
I_\ell (\beta) &\leq \frac{1}{2 \pi} \int_{|\theta| \leq 2 \beta / 3} e^{- 3\ell \theta^2 / (4 \beta^3)} d \theta + \frac{1}{2 \pi} \int_{2 \beta / 3 \leq | \theta | \leq \pi} e^{- \ell / (3 \beta)} d \theta \\
&\leq \frac{1}{2 \pi} \int_{-\infty}^{\infty} e^{- 3\ell \theta^2 / (4 \beta^3)} d \theta + \frac{1}{2 \pi} \int_{- \pi}^\pi e^{- \ell / (3 \beta)} d \theta
\\ &= \frac{\beta^{3/2}}{\sqrt{3 \pi \ell}} + e^{-\ell / (3 \beta)}.
\end{align*}
All that is left is to absorb the second term into the first with a factor of $0.01$. Their ratio is
$$T(\ell,\beta)=\frac{\sqrt{3\pi\ell}}{\beta^{3/2}}e^{-\ell/(3\beta)}.$$
For $\ell\geq3$ and $0<\beta\leq0.1$, its logarithmic derivatives satisfy
$$\partial_\ell\log T=\frac{1}{2\ell}-\frac{1}{3\beta}<0,\qquad
\partial_\beta\log T=-\frac{3}{2\beta}+\frac{\ell}{3\beta^2}>0.$$
Thus the ratio is largest at $\ell=3$, $\beta=0.1$, where
$$T(3,0.1)=30\sqrt{10\pi}\,e^{-10}<0.007634<0.01.$$
Hence
$$e^{-\ell / (3 \beta)} < 0.01 \frac{\beta^{3/2}}{\sqrt{3 \pi \ell}}.$$
This proves \eqref{eq:circle-final}.
\end{proof}

\subsection{The partition-product bound}\label{sec:product-bound}
\begin{lemma}[Partition-product bound]\label{lem:product-bound}
For $0<\beta\leq1/10$, with $R(\beta)$ as in \eqref{eq:theta-remainder},
\begin{equation} \label{eq:product-bound}
 R(\beta)P(e^{-\beta})<1.001\sqrt{\frac\beta{2\pi}}
                  e^{\pi^2/(6\beta)-\beta/24}.
\end{equation}
\end{lemma}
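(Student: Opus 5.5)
We derive this from the modular transformation of the Dedekind eta function. Write $\eta(\tau)=q^{1/24}\prod_{n\ge1}(1-q^n)$ with $q=e^{2\pi i\tau}$. At $\tau=i\beta/(2\pi)$ we have $q=e^{-\beta}$, and the transformation $\eta(-1/\tau)=\sqrt{-i\tau}\,\eta(\tau)$ relates $q=e^{-\beta}$ to $\tilde q=e^{-4\pi^2/\beta}$. Taking reciprocals gives the exact identity
\[
 P(e^{-\beta})=\sqrt{\frac{\beta}{2\pi}}\,
 e^{\pi^2/(6\beta)-\beta/24}\prod_{n\ge1}\bigl(1-e^{-4\pi^2 n/\beta}\bigr).
\]
This is the standard Hardy--Ramanujan form. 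It follows from Poisson summation, or equivalently from the Jacobi triple product together with the theta transformation already used in Lemma~\ref{lem:gaussian-bound}. The trailing product lies in $(0,1)$, so
\[
 P(e^{-\beta})<\sqrt{\beta/(2\pi)}\,e^{\pi^2/(6\beta)-\beta/24}.
\]

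It therefore suffices to show $R(\beta)<1.001$ on $0<\beta\le1/10$. The function $u_\beta=e^{-\pi^2/(2\beta)}$ is increasing in $\beta$, and $R(\beta)=1+2u_\beta/(1-u_\beta)$ is increasing in $u_\beta$. Hence the maximum of $R$ on this range is attained at $\beta=1/10$. There $u_{1/10}=e^{-5\pi^2}<e^{-49}<10^{-21}$. This gives $R(\beta)\le 1+3\cdot10^{-21}<1.001$, and multiplying by the previous display yields \eqref{eq:product-bound}.

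The only real obstacle is justifying the eta transformation rigorously in the paper's self-contained style. We would cite it (e.g.\ DLMF 27.14 or Apostol) rather than reprove it. If a self-contained argument is preferred, we would write
\[
 \log P(e^{-\beta})=\sum_{k\ge1}\frac1k\,\frac{e^{-k\beta}}{1-e^{-k\beta}}.
\]
We would then apply the Mellin transform: the poles at $s=1$, $0$, $-1$ give $\pi^2/(6\beta)$, $\tfrac12\log(\beta/2\pi)$ and $-\beta/24$ respectively. The remaining contour shift produces exactly $\sum_n\log(1-e^{-4\pi^2n/\beta})\le0$. Either route gives the inequality with a huge margin, so the constant $1.001$ is far from tight.
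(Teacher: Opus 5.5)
Your route is the paper's (eta transformation, then monotonicity in $\beta$ to evaluate at $\beta=1/10$). However, the transformation identity you wrote has the correction factor inverted, and this breaks the key reduction. At $\tau=i\beta/(2\pi)$, the relation $\eta(-1/\tau)=\sqrt{-i\tau}\,\eta(\tau)$ reads
\[
 \sqrt{\frac{\beta}{2\pi}}\,\frac{e^{-\beta/24}}{P(e^{-\beta})}
 =e^{-\pi^2/(6\beta)}\prod_{n\ge1}\bigl(1-e^{-4\pi^2n/\beta}\bigr),
\]
so
\[
 P(e^{-\beta})=\sqrt{\frac{\beta}{2\pi}}\,e^{\pi^2/(6\beta)-\beta/24}\,P(e^{-4\pi^2/\beta}).
\]
The trailing factor is the \emph{reciprocal} product $P(e^{-4\pi^2/\beta})=\prod_{n\ge1}(1-e^{-4\pi^2n/\beta})^{-1}>1$. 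A quick check shows your version cannot be right. At the self-dual point $\beta=2\pi$ the prefactor equals $1$ and $e^{-4\pi^2/\beta}=e^{-\beta}$, so your identity would give $P(e^{-2\pi})=1/P(e^{-2\pi})$, i.e.\ $P(e^{-2\pi})=1$, which is false. Consequently your intermediate claim $P(e^{-\beta})<\sqrt{\beta/(2\pi)}\,e^{\pi^2/(6\beta)-\beta/24}$ is false for every $\beta>0$; the reverse strict inequality holds. The reduction ``it therefore suffices to show $R(\beta)<1.001$'' is thus invalid. Your Mellin remark has the same sign slip: the leftover term is $\log P(e^{-4\pi^2/\beta})=-\sum_n\log(1-e^{-4\pi^2n/\beta})\ge0$, not $\le0$.

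What is missing is an upper bound on $P(e^{-4\pi^2/\beta})$, which must be absorbed into the constant $1.001$ together with $R(\beta)$. The paper does exactly this. For $0<x<1$ it uses
\[
 \log P(x)=\sum_{n,k\ge1}\frac{x^{nk}}{k}\le\sum_{n\ge1}\frac{x^n}{1-x^n}\le\frac{x}{(1-x)^2}.
\]
It then notes that $R(\beta)$ and $P(e^{-4\pi^2/\beta})$ both increase with $\beta$, so their product is largest at $\beta=1/10$. There the product is at most $(1+2u/(1-u))\,e^{v/(1-v)^2}$ with $u=e^{-5\pi^2}$ and $v=e^{-40\pi^2}$, far below $1.001$. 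With this one extra estimate your argument goes through, and you are right that the margin is enormous. As written, though, the proof rests on a false inequality.
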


\begin{proof}
Begin with the eta transformation \cite{DLMFeta} to obtain
\begin{equation}\label{eq:eta-transform}
 P(e^{-\beta})=\sqrt{\frac\beta{2\pi}}\,
 e^{\pi^2/(6\beta)-\beta/24}P(e^{-4\pi^2/\beta}).
\end{equation}
We also make use of, for $0 < x < 1$,
$$\log P(x) = \sum\limits_{n,k \geq 1} \frac{x^{nk}}{k} \leq \sum\limits_{n \geq 1} \frac{x^n}{1 - x^n} \leq \frac{1}{1-x} \sum\limits_{n \geq 1} x^n = \frac{x}{(1-x)^2}$$
hence
$$1 \leq P(x) \leq \exp \left( \frac{x}{(1-x)^2} \right).$$
Both $R(\beta)$ and $P(e^{-4\pi^2/\beta})$ increase with $\beta$, so their product is largest at $\beta=1/10$. Applying the preceding bound on $\log P$ at this endpoint gives
\begin{equation}\label{eq:radial-remainder}
 R(\beta)P(e^{-4\pi^2/\beta})
 \le\left(1+\frac{2u}{1-u}\right)e^{v/(1-v)^2}<1.001,
 \quad u=e^{-5\pi^2},\quad v=e^{-40\pi^2}.
\end{equation}
Combining this with \eqref{eq:eta-transform} proves \eqref{eq:product-bound}.
\end{proof}

\subsection{Putting it all together}\label{sec:assembly}
Recall $\Aconst = \pi \sqrt{2/3}$. Assume integer coordinates $(\ell,d,s)$ with $\ell\geq3$, $d\geq165\ell$ and $|s|\leq\ell/2$. Then $N = \ell d - \ell^2 + 1 - s^2$ and we have
\begin{equation}\label{eq:tail-domain}
 N\ge\frac{655}{4}\ell^2+1,\qquad
 N+\ell^2-1=\ell d-s^2\ge\frac{659}{4}\ell^2.
\end{equation}

\begin{proposition}[Common analytical estimate]\label{prop:common-tail}
Under these assumptions,
\begin{equation}\label{new:eq:general-tail}
 m(\ell,d,s)<\frac32(1.001)^\ell 2^{-(\ell-1)/2}
             \frac{e^{\Aconst\sqrt{N+\ell^2-1}}}{4\sqrt3\,N}.
\end{equation}
\end{proposition}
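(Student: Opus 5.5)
The plan is to plug the three lemmas of the preceding subsections into the Cauchy estimate \eqref{eq:analytic-cauchy}, and then choose the free parameter $\beta$ as the saddle point of the exponent. Write $M=N+\ell^2-1=\ell d-s^2$ and $X=M/\ell=d-s^2/\ell$.

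\textbf{Step 1: substitute the three lemmas.} Into \eqref{eq:analytic-cauchy} I substitute:
\begin{itemize}[leftmargin=2em]
\item Lemma~\ref{lem:gaussian-bound} for $\Theta_{\ell,s}(e^{-\beta})$, which contributes $R(\beta)^{\ell-1}$;
\item Lemma~\ref{lem:circle-bound} for $I_\ell(\beta)$;
\item Lemma~\ref{lem:product-bound} for $P(e^{-\beta})^\ell$.
\end{itemize}
For the last item I use $R(\beta)\ge1$, so that $P^\ell R^{\ell-1}\le (RP)^\ell$. All three lemmas require $\ell\ge3$ and $0<\beta\le1/10$; I will verify the constraint on $\beta$ in Step 2.

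Collecting the powers of $\beta$ and $\pi$,
\[
\Bigl(\tfrac{\beta}{2\pi}\Bigr)^{\ell/2}\Bigl(\tfrac{\pi}{\beta}\Bigr)^{(\ell-1)/2}\beta^{3/2}=\frac{2^{-\ell/2}\beta^2}{\sqrt\pi}.
\]
Together with the factor $\ell^{-1}\cdot\ell^{-1/2}\cdot(3\pi\ell)^{-1/2}$, this gives
\[
m(\ell,d,s)\le \frac{1.01\,(1.001)^\ell\,2^{-\ell/2}\,\beta^2}{\pi\sqrt3\,\ell^2}\exp\Bigl(\beta X+\frac{\ell\pi^2}{6\beta}-\frac{\ell\beta}{24}\Bigr).
\]
I drop the harmless term $-\ell\beta/24$. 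The inequality can be made strict, since the product bound is strict.

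\textbf{Step 2: choose $\beta$ and check it is admissible.} I minimize $\beta X+\ell\pi^2/(6\beta)$ by taking
\[
\beta=\pi\sqrt{\ell/(6X)}=\frac{\pi\ell}{\sqrt{6M}}.
\]
The exponent then becomes $2\pi\sqrt{\ell X/6}=\Aconst\sqrt{M}$, which is exactly the exponential in \eqref{new:eq:general-tail}.

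The main point to check is admissibility, $\beta\le1/10$. This is where the hypothesis $d\ge165\ell$ enters through \eqref{eq:tail-domain}. Since $M\ge 659\ell^2/4$,
\[
\beta\le \frac{\pi}{\sqrt{6\cdot659/4}}=\frac{\pi}{\sqrt{988.5}}<0.09993 .
\]
So the choice is legal, and the constant $165$ appears to be chosen precisely so that this works. I expect this to be the only delicate step: the margin is tight, so I will write out the numerical value.

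\textbf{Step 3: compare prefactors.} With $\beta^2=\pi^2\ell^2/(6M)$, the prefactor becomes
\[
\frac{1.01\,(1.001)^\ell\,2^{-\ell/2}\,\pi}{6\sqrt3\,M}.
\]
The target prefactor is
\[
\frac32\,2^{-(\ell-1)/2}\frac{1}{4\sqrt3\,N}=\frac{3\sqrt2}{8\sqrt3}\,\frac{2^{-\ell/2}}{N}.
\]
Since $M=N+\ell^2-1\ge N>0$, I may replace $1/M$ by $1/N$. It then suffices that
\[
\frac{1.01\,\pi}{6}<\frac{3\sqrt2}{8},
\]
that is, $0.529<0.530$. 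This holds, though only barely.

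This last numerical comparison is the other place to be careful. If it were too tight, I would instead keep the ratio $N/M<1$ or retain the discarded factor $e^{-\ell\beta/24}$ to gain slack. With Steps 2 and 3 checked, the bound \eqref{new:eq:general-tail} follows.
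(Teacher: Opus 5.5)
Your proposal is correct and follows the paper's proof step for step. It uses the same substitution of Lemmas~\ref{lem:gaussian-bound}, \ref{lem:circle-bound} and \ref{lem:product-bound} into \eqref{eq:analytic-cauchy}, with $R^{\ell-1}P^\ell\le(RP)^\ell$ and $e^{-\ell\beta/24}$ dropped. It then makes the same choice $\beta=\pi\ell/\sqrt{6(N+\ell^2-1)}\le\pi\sqrt{2/1977}<0.1$, the same replacement of $1/(N+\ell^2-1)$ by $1/N$, and the same closing constant check: your $1.01\pi/6<3\sqrt2/8$ is just a rescaling of the paper's $1.01\pi\sqrt2/3<3/2$.
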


\begin{proof}
We finally choose our value of $\beta$ to be
\[
 \beta=\frac{\pi\ell}{\sqrt{6(N+\ell^2-1)}}
       \le\pi\sqrt{\frac{2}{1977}}<0.1.
\]
This is where we use $d \geq 165 \ell$.  Then all three bounds apply.  Substitute \eqref{eq:theta-uniform},
\eqref{eq:circle-final}, and \eqref{eq:product-bound} into
\eqref{eq:analytic-cauchy}. The $\Theta$ bound contributes $R(\beta)^{\ell-1}$. Since $R(\beta)\geq1$, we may bound $R(\beta)^{\ell-1}P(e^{-\beta})^\ell$ above by $[R(\beta)P(e^{-\beta})]^\ell$. Drop $e^{-\ell\beta/24}\leq1$ from the partition-product bound and simplify to obtain
\begin{equation}\label{eq:common-before-saddle}
 m(\ell,d,s)\le
 \frac{1.01\,\beta^2(1.001)^\ell}{2^{\ell/2}\sqrt3\,\pi\ell^2}
 \exp\!\left(\frac{\beta(N+\ell^2-1)}{\ell}
                   +\frac{\pi^2\ell}{6\beta}\right).
\end{equation}
Substituting $\beta$ makes the exponent $\Aconst\sqrt{N+\ell^2-1}$. Using $\beta^2/\ell^2=\pi^2/[6(N+\ell^2-1)]\leq\pi^2/(6N)$ gives
\begin{align*}
m(\ell,d,s)&\leq\frac{1.01\pi(1.001)^\ell}{2^{\ell/2}6\sqrt3\,N}e^{\Aconst\sqrt{N+\ell^2-1}}\\
&=\frac{1.01\pi\sqrt2}{3}\frac{(1.001)^\ell}{2^{(\ell-1)/2}4\sqrt3\,N}e^{\Aconst\sqrt{N+\ell^2-1}}.
\end{align*}
Finally, for a simpler form, we have
\[
 \frac{1.01\pi\sqrt2}{3}<\frac32
\]
which proves the proposition.
\end{proof}

From there it remains only to compare to \eqref{eq:partition-bound}, which is the reason the constants were split instead of combined.  Begin with
\[
 \Aconst\bigl(\sqrt{N+\ell^2-1}-\sqrt N\bigr)
 =\frac{\Aconst(\ell^2-1)}{\sqrt{N+\ell^2-1}+\sqrt N}
 <\frac{\Aconst}{\sqrt{655}}\left(\ell-\frac1\ell\right)
 <0.101\left(\ell-\frac1\ell\right).
\]
Also $1/(2\sqrt N)<1/(\ell\sqrt{655})$. Dividing
\eqref{new:eq:general-tail} by the partition lower bound
\eqref{eq:partition-bound} therefore gives
\begin{equation}\label{new:eq:tail-ratio}
 \frac{m(\ell,d,s)}{p(N)}<
 \frac{(3/2)(1.001)^\ell 2^{-(\ell-1)/2}
       e^{0.101(\ell-1/\ell)}}
 {1-1/(\ell\sqrt{655})}.
\end{equation}
We then have an expression that only depends on $\ell$.

\begin{proposition}\label{prop:tail}
For every $\ell\ge3$, $|s|\le\ell/2$, and $d\ge165\ell$,
$m(\ell,d,s)<p(N)$.
\end{proposition}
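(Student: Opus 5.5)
The plan is to show that the right-hand side of \eqref{new:eq:tail-ratio} is strictly less than $1$ for every integer $\ell\ge3$. Combined with the positivity $p(N)>0$, this gives the strict inequality $m(\ell,d,s)<p(N)$. Proposition~\ref{prop:common-tail} and the two elementary estimates that follow it already provide \eqref{new:eq:tail-ratio} under the present hypotheses $\ell\ge3$, $|s|\le\ell/2$, $d\ge165\ell$. By \eqref{eq:tail-domain}, these hypotheses also give $N\ge1$, so the partition lower bound \eqref{eq:partition-bound} applies. What remains is therefore a one-variable inequality
\[
 F(\ell):=\frac{(3/2)(1.001)^\ell 2^{-(\ell-1)/2}e^{0.101(\ell-1/\ell)}}{1-1/(\ell\sqrt{655})}<1 .
\]

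To handle the numerator, I will write it as $\tfrac32\sqrt2\,\exp\bigl(\ell(\log1.001-\tfrac12\log2+0.101)-0.101/\ell\bigr)$. The coefficient of $\ell$ is about $0.001-0.3466+0.101\approx-0.2446<0$, so the numerator is eventually exponentially decreasing. The factor $e^{-0.101/\ell}\le1$ can be dropped. The denominator increases in $\ell$, so it is at least its value $1-1/(3\sqrt{655})>0.98$ at $\ell=3$. This gives
\[
F(\ell)\le \frac{3\sqrt2}{2\cdot 0.98}\,e^{-0.2446\,\ell},
\]
which is decreasing in $\ell$. It suffices to evaluate this bound at $\ell=3$, where it equals roughly $2.165\cdot e^{-0.734}\approx2.165\cdot0.480\approx1.04$. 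That is slightly too large, so the crude bound does not close the base case.

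The main obstacle is therefore the smallest levels, where the margin is thin. At $\ell=3$ I will compute $F(3)$ exactly rather than dropping terms:
\[
F(3)=\tfrac32(1.001)^3\cdot\tfrac12\,e^{0.101\cdot 8/3}\big/\bigl(1-1/(3\sqrt{655})\bigr).
\]
This is about $0.7523\cdot1.309/0.98698\approx0.998<1$, which is tight but holds. I would verify it with rigorous interval arithmetic, as elsewhere in the paper. For $\ell=4$ a direct evaluation gives $F(4)\approx\tfrac32(1.004)\,2^{-3/2}e^{0.3788}/0.9902\approx0.78<1$. For $\ell\ge5$, the crude monotone bound above gives at most $2.165\,e^{-1.223}\approx0.64<1$. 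Together these cover all $\ell\ge3$ and prove the proposition.

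If the $\ell=3$ margin turned out not to survive rigorous rounding, I would fall back on sharpening the constants. The first choice is to keep the exact factor $1.01\pi\sqrt2/3\approx1.4957$ instead of $3/2$. Another option is to use $\Aconst/\sqrt{655}\approx0.1003$ instead of $0.101$. Either change restores a comfortable margin at $\ell=3$.
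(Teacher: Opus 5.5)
Your proposal is correct and follows essentially the paper's argument: both reduce to the one-variable bound \eqref{new:eq:tail-ratio}, check it at $\ell=3$ (value $\approx0.998$), and handle larger $\ell$ by monotonicity. The paper does this through the per-step numerator factor $\frac{1.001}{\sqrt2}e^{0.101(1+1/(\ell(\ell+1)))}<0.790$, while you use a cruder exponentially decaying majorant valid from $\ell=5$ plus a direct check at $\ell=4$. One small rounding slip: $\log1.001-\tfrac12\log2+0.101\approx-0.24457>-0.2446$, so $e^{-0.2446\ell}$ is not quite an upper bound; write $e^{-0.2445\ell}$ instead, which still gives about $0.64$ at $\ell=5$.
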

\begin{proof}
At $\ell=3$, the right side of \eqref{new:eq:tail-ratio} is
\[
 \frac{(3/4)(1.001)^3e^{8(0.101)/3}}
 {1-1/(3\sqrt{655})}<0.998<1.
\]
As $\ell$ increases by one, the denominator increases, while the
numerator decreases because it is multiplied by
\[
 \frac{1.001}{\sqrt2}
 e^{0.101(1+1/(\ell(\ell+1)))}
 \le\frac{1.001}{\sqrt2}e^{(0.101)13/12}<0.790<1.
\]
Thus the comparison only improves for $\ell\ge3$.
\end{proof}

This completes the proof for the large-depth region $d\geq165\ell$. The free Lie coefficient bound \eqref{eq:free-upper} does not suffice throughout the remaining region $2\ell\leq d<165\ell$. For example, at $(\ell,d,s)=(10,20,0)$, it gives the upper bound $246128486901/2$, which exceeds $p(101)=214481126$. We therefore need a sharper bound for the remaining region.

\section{The homological bound}\label{sec:homology}
In this section, we develop a methodology to study $A_1^{++}$ from parabolic homology.  This will provide much sharper bounds.

\subsection{The positive-level radical and its resolution}
Let
\begin{equation}\label{eq:radical}
 \nn=\bigoplus_{\ell>0}g_{(\ell)},\qquad B=U(\nn),\qquad U=\ch B.
\end{equation}
The level-zero part of $B$ is a copy of $\C$. Let $B_+$ be the augmentation ideal, the kernel of the algebra homomorphism $\epsilon:B\to\C$ that sends $1$ to $1$ and every element of $\nn$ to zero. Although the positive-level spaces are infinite-dimensional, $B$ has finite-dimensional pieces in each root degree, by PBW and the finite number of decompositions of a fixed positive degree.

The following construction is the multigraded version of the standard
minimal graded free resolution; see \cite[Section 1.5, Lemmas 1.5.1--1.5.2]{Rogalski}
for graded Nakayama and the minimality criterion. The proof below uses height and does not require $B$ or its kernels to be finitely generated.

\begin{lemma}[Minimal multigraded resolution]\label{lem:minimal}
The trivial left $B$-module has a multigraded free resolution
\[
 \cdots\longrightarrow F_2\longrightarrow F_1\longrightarrow
 F_0=B\longrightarrow\C\longrightarrow0,
 \qquad F_i\cong B\otimes_{\C}E_i,
\]
such that the matrix entries of $d_i:F_i\to F_{i-1}$ lie in $B_+$ for $i\geq1$ and
$E_i\cong\Tor_i^B(\C,\C)\cong H_i(\nn,\C)$ as multigraded
vector spaces. The modules $F_i$ have finite-dimensional pieces in each root degree.
\end{lemma}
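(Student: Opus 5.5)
We construct the resolution step by step, choosing minimal generators at each stage, and use height in place of finite generation. Every nonzero multidegree in $B$ has positive level, hence positive height $\height=\ell+2d+s$. Each homogeneous piece is finite-dimensional, and a fixed height admits only finitely many multidegrees supported in the nonnegative root cone.

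\emph{Graded Nakayama.} For a multigraded left $B$-module $M$ with finite-dimensional pieces, supported in degrees of height bounded below, let $E=M/B_+M$. Choose a homogeneous section $E\to M$. We show that the induced map $B\otimes E\to M$ is surjective, by induction on height. An element of $M$ of a given degree is a combination of section elements plus an element of $B_+M$. The latter is a finite sum of terms $bm'$ with $b\in B_+$ homogeneous of positive height, so $m'$ has strictly smaller height and is handled by the inductive hypothesis. Since heights are bounded below, the induction is well founded, even though $M$ need not be finitely generated.

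\emph{Construction.} Start with $F_0=B\to\C$, whose kernel is $B_+$. Given $d_i:F_i\to F_{i-1}$ with kernel $K_i\subseteq B_+F_i$, apply the Nakayama step to $K_i$. Set $E_{i+1}=K_i/B_+K_i$ and $F_{i+1}=B\otimes E_{i+1}$, with $d_{i+1}$ mapping onto $K_i$. Then $F_{i+1}$ has finite-dimensional pieces, because $B$ and $E_{i+1}$ do and each degree has finitely many decompositions. The image of $d_{i+1}$ is $K_i\subseteq B_+F_i$, so the matrix entries of $d_{i+1}$ lie in $B_+$.

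It remains to check that the next kernel $K_{i+1}$ again lies in $B_+F_{i+1}$. Take $x\in K_{i+1}$ and write $x=\sum b_j\otimes e_j+y$ with $b_j\in\C$ and $y\in B_+F_{i+1}$. Its image in $K_i/B_+K_i=E_{i+1}$ is $\sum b_je_j$, and this must vanish because $d_{i+1}(x)=0$. Since the $e_j$ form a basis, all $b_j=0$, so $x\in B_+F_{i+1}$. Minimality therefore propagates by induction.

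\emph{Identification.} Apply $\C\otimes_B-$ to the resolution. Because the matrix entries lie in $B_+$, the induced differentials on $\C\otimes_BF_i\cong E_i$ are all zero. Hence $\Tor_i^B(\C,\C)\cong E_i$ multigradedly. Finally, $\Tor^{U(\nn)}_i(\C,\C)\cong H_i(\nn,\C)$ is the standard identification via the Chevalley--Eilenberg (Koszul) resolution $U(\nn)\otimes\Lambda^\bullet\nn$. This resolution is multigraded and also has finite-dimensional pieces, so the comparison preserves degrees.

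The main obstacle is the Nakayama step without finite generation. It must be phrased carefully so that each homogeneous degree involves only finitely many lower-height contributions; the height completion set up in Section~\ref{sec:geometry} supplies exactly this.
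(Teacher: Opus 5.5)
Your proposal is correct and follows essentially the same route as the paper. A height induction replaces finite generation in the graded Nakayama step, and minimality comes from linear independence of the chosen basis of $K_i/B_+K_i$ (the paper phrases this with the augmentation $\epsilon(b_a)$, you via $d_{i+1}(y)\in B_+K_i$ for $y\in B_+F_{i+1}$); $\Tor_i^B(\C,\C)\cong E_i$ then follows because the differentials vanish after $\C\otimes_B-$, and the identification with $H_i(\nn,\C)$ is the same Chevalley--Eilenberg comparison the paper cites.
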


\begin{proof}
We must construct $E_i$ first.  Set $E_0 = \C, F_0 = B$ and $K_0 = \ker(B \to \C) = B_+$.

We then proceed iteratively. Suppose $K_{i-1}\subseteq F_{i-1}$ is the next kernel to handle. The submodule $B_+K_{i-1}$ consists of finite sums of products $bk$ with $b\in B_+$ and $k\in K_{i-1}$. Choose a homogeneous vector-space basis $\{\bar k_a\}$ of $K_{i-1}/B_+K_{i-1}$ and representatives $k_a\in K_{i-1}$ of the same multidegrees. Introduce formal symbols $e_a$ with $\deg e_a=\deg k_a$. Define
\[
 E_i=\bigoplus_a\C e_a,\qquad
 F_i=B\otimes_{\C}E_i,\qquad
 d_i(b\otimes e_a)=bk_a\in K_{i-1}\subset F_{i-1}.
\]
By construction $F_i$ maps onto our chosen representatives.

We need to show that this map is surjective onto $K_{i-1}$.  We proceed by induction on the integer height $h=\ell+2d+s$ of a homogeneous element $k\in K_{i-1}$ of degree $(\ell,d,s)$.

Expressing the class of $k$ in the quotient basis gives
\[
 k=\sum_a c_a k_a+\sum_j b_jk'_j,
 \qquad c_a\in\C,\quad b_j\in B_+,\quad k'_j\in K_{i-1}.
\]

Both sums are finite. Taking homogeneous components, we may assume that every term has the same degree as $k$, with each $b_j$ and $k'_j$ homogeneous. For $h=0$, the second sum is absent because $B_+K_{i-1}$ has no height-zero component. Thus $k$ is a linear combination of the chosen representatives and belongs to the image of $d_i$.

Now let $h\geq1$. Each $k'_j$ has height $h-\height(b_j)<h$, since $b_j\in B_+$. By induction, each $k'_j$ is in the image of $d_i$. The map is $B$-linear, so each $b_jk'_j$ is also in the image, as is each chosen representative. This proves surjectivity.

Set $K_i=\ker d_i$ and repeat, giving exactness at every stage. Local finiteness is also preserved: $E_i$ has the graded dimensions of a quotient of $K_{i-1}$, and each degree of $B\otimes_{\C}E_i$ is a finite sum of tensor products of finite-dimensional spaces.

Next we show that the resolution is minimal. Let $\sum_a b_a\otimes e_a\in K_i$. Its image is $\sum_a b_a k_a=0$. In the quotient $K_{i-1}/B_+K_{i-1}$, this becomes
$$\sum_a\epsilon(b_a)\bar k_a=0.$$
By linear independence, each $\epsilon(b_a)=0$, hence $K_i\subseteq B_+F_i$. Together with $K_0=B_+F_0$, this gives $d_i(F_i)=K_{i-1}\subseteq B_+F_{i-1}$ for every $i\geq1$. Since $B_+$ is supported on positive levels, it also follows inductively that $E_i$ is supported on levels at least $i$ and $K_i$ on levels at least $i+1$.

Finally, tensor over $B$ with the trivial right module $\C=B/B_+$.  More explicitly we form
$$\C \otimes_B F_i \qquad c \cdot b = c \epsilon(b).$$
We obtain $\C\otimes_B F_i\cong E_i$ from the map $c \otimes (b \otimes e) \to c \epsilon(b) e$, and all differentials become
zero because their entries lie in $B_+$. The resulting homology is
therefore $E_i$, while by definition it computes
$\Tor_i^B(\C,\C)$. The identification with Lie algebra
homology $H_i(\nn,\C)$ is \cite[Corollary 7.3.6]{Weibel}; the
Chevalley--Eilenberg free resolution realizes it explicitly
\cite[Theorem 7.7.2 and Corollary 7.7.3]{Weibel}.
\end{proof}

\subsection{Odd truncation and the logarithmic majorant}
Now we have the main tool we need to state the theorem.

\begin{theorem}[Odd homology truncation]\label{thm:odd}
For an odd positive integer $m$, define
\begin{equation}\label{eq:Dm}
 D_m=\sum_{i=0}^m(-1)^i\ch H_i(\nn,\C).
\end{equation}
Then $D_m^{-1}$ and $-\log D_m$ have nonnegative coefficients, and
\begin{equation}\label{eq:majorant}
 m(\ell,d,s)\le[t^\ell q^d z^s](-\log D_m)
 \qquad(\ell>0).
\end{equation}
\end{theorem}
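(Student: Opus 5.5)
The plan is to take the Euler characteristic of the minimal resolution of Lemma~\ref{lem:minimal}, truncated at stage $m$, and keep track of the kernel left over at the cut. Let $K_m=\ker(d_m:F_m\to F_{m-1})$. Lemma~\ref{lem:minimal} gives an exact sequence of multigraded vector spaces with finite-dimensional pieces in each degree:
\[
 0\longrightarrow K_m\longrightarrow F_m\longrightarrow\cdots\longrightarrow F_0\longrightarrow\C\longrightarrow0 .
\]
Each graded piece is finite-dimensional, so the alternating sum of characters vanishes degree by degree. Since $\ch F_i=U\cdot\ch E_i$ and $E_i\cong H_i(\nn,\C)$, this gives
\[
 U\,D_m=\sum_{i=0}^m(-1)^i\ch F_i=1+(-1)^{m}\ch K_m = 1-\ch K_m
\]
when $m$ is odd. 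Here the level-zero copy of $\C$ contributes the constant $1$. This identity is the key step, and it is the only place where the parity of $m$ enters: the truncation error enters with a minus sign, so $D_m$ undershoots the exact denominator $1/U$ by a nonnegative amount.

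From $D_m=(1-\ch K_m)/U$, both claims on the sign of coefficients follow. By the last paragraph of the proof of Lemma~\ref{lem:minimal}, $K_m$ is supported in levels $\ge m+1\ge 2$, so $\ch K_m$ has no constant term. The geometric and logarithmic series in $\ch K_m$ are therefore well defined in the height-completed ring of \eqref{eq:character}. We obtain
\[
 D_m^{-1}=U\sum_{j\ge0}(\ch K_m)^j,\qquad
 -\log D_m=\log U+\sum_{j\ge1}\frac{(\ch K_m)^j}{j}.
\]
The first series has nonnegative coefficients because $U$ and $\ch K_m$ do. For the second, note that $-\log D_m\succeq\log U$ and that $\log U$ is itself nonnegative, as shown next.

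For \eqref{eq:majorant}, apply the PBW computation from the proof of Lemma~\ref{lem:free-lie} to $\nn$ in place of $L(V)$. With $x=t^\ell q^dz^s$ running over the positive-level degrees, this gives
\[
 \log U=\sum_{\ell>0}\sum_{d,s} m(\ell,d,s)\sum_{k\ge1}\frac{x^{k}}{k}\ \succeq\ \sum_{\ell>0}\sum_{d,s}m(\ell,d,s)\,t^\ell q^dz^s ,
\]
where the inequality keeps only the $k=1$ terms. Combining this with $-\log D_m\succeq\log U$ and extracting the coefficient of $t^\ell q^dz^s$ yields \eqref{eq:majorant} for every $\ell>0$.

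The main point to handle carefully is the exactness bookkeeping at the truncation. One must make sure that the alternating character sum is taken in each root degree separately, where all spaces are finite-dimensional, so the infinite-dimensional levels cause no trouble. One must also check the sign convention so that $K_m$ enters as $-\ch K_m$ exactly when $m$ is odd. The remaining formal manipulations (inverse and logarithm of a series with constant term $1$, and positivity) are routine in the height completion.
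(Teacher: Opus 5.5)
Your proposal is correct and follows essentially the same route as the paper. Both arguments take the truncated Euler characteristic of the minimal resolution, keeping the leftover kernel $K_m$, to get $UD_m=1-\ch K_m$ for odd $m$. Both then expand $D_m^{-1}$ and $-\log D_m$ in $\ch K_m$, which is supported in levels $\ge m+1$, to obtain positivity and $-\log D_m\succeq\log U$. Finally, both read off the bound from the first-order terms of $\log U$ via the PBW product over positive-level roots.
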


\begin{proof}
Let $K_m = \ker(F_m \to F_{m-1})$ from Lemma \ref{lem:minimal}.  Write $Z_m = \ch K_m$.  Then we have an exact sequence
\[
 0\longrightarrow K_m\longrightarrow F_m\longrightarrow\cdots
 \longrightarrow F_0\longrightarrow\C\longrightarrow0
\]
with, degree by degree, the Euler-characteristic identity
\cite[pp.~31--32, equations (2.C)--(2.D)]{Rogalski}
\begin{equation}\label{eq:truncated-euler}
 \sum_{i=0}^m(-1)^i\ch F_i=1+(-1)^mZ_m.
\end{equation}
Recall $U=\ch B$. Lemma \ref{lem:minimal} gives $\ch F_i=U\,\ch H_i(\nn,\C)$. Substituting into \eqref{eq:truncated-euler} and using that $m$ is odd gives
\begin{equation}\label{eq:sign-controlled}
                       D_mU=1-Z_m.
\end{equation}
By Lemma \ref{lem:minimal}, $Z_m$ has nonnegative coefficients and is supported on levels at least $m+1$. Also, $U$ and $D_m$ have constant term $1$. The following expansions are therefore well-defined in the completed character ring of Section \ref{sec:geometry}:
\begin{align}
 D_m^{-1}&=\frac{U}{1-Z_m}=U\sum_{r\geq0}Z_m^r\succeq0,\label{eq:inverse-positive}\\
 -\log D_m&=\log U-\log(1-Z_m)
            =\log U+\sum_{r\ge1}\frac{Z_m^r}{r}
            \succeq\log U.\label{eq:log-positive}
\end{align}

For $\alpha=\ell\alpha_{-1}+d\alpha_0+(d+s)\alpha_1$, write $e^\alpha=t^\ell q^d z^s$ and $\ell(\alpha)=\ell$. We may once again use the PBW theorem \cite[Theorem 7.3.7]{Weibel}, as in Lemma \ref{lem:free-lie}, applied to an ordered homogeneous basis in each finite root degree, to obtain
\begin{equation}\label{eq:pbw}
 U=\prod_{\substack{\alpha\in\Delta^+\\\ell(\alpha)>0}}
       (1-e^\alpha)^{-\dim g_\alpha},\qquad
 \log U=\sum_{\substack{\alpha\in\Delta^+\\\ell(\alpha)>0}}
       \dim g_\alpha\sum_{r\ge1}\frac{e^{r\alpha}}r.
\end{equation}
All terms on the right are nonnegative, so \eqref{eq:log-positive} also proves that $-\log D_m$ has nonnegative coefficients. For each positive-level root $\alpha$, its $r=1$ term gives
$$\dim g_\alpha\leq[e^\alpha](\log U)\leq[e^\alpha](-\log D_m)$$
or equivalently, the statement of the theorem.
\end{proof}

\begin{remark}
The statement provides a bound for all odd values of $m$. For even $m$, \eqref{eq:truncated-euler} instead gives $D_mU=1+Z_m$, so the same argument does not give these coefficientwise bounds. For $m=1$, we have $H_1(\nn,\C)\cong\nn/[\nn,\nn]\cong V$ and $D_1=1-tv$, recovering the free Lie bound of Lemma \ref{lem:free-lie}.
\end{remark}

The rest of this section specializes the bound to $m=5$. Our first objective is to calculate $D_5$ explicitly.

\subsection{Parabolic Homology}
In this section we identify the homology groups $H_1,\ldots,H_5$ with the goal of calculating $D_5$.  We use the parabolic homology formula and the notation from \cite{KK}, where these methods were applied to $A_1^{++}$ extensively.

Let $I = \{ -1, 0, 1 \}, J = \{0,1\}$ and $W_J$ be the subgroup of $W$ generated by $r_0, r_1$.   Let $\rho$ satisfy $\langle \rho, \alpha_i^\vee \rangle = 1$ for all $i$.  Following Kang--Kim \cite[Section 3.2]{KK}, write
\[
 \Delta_J^+=\Delta^+\cap\sum_{j\in J}\Z\alpha_j,
 \qquad \Delta_+(J)=\Delta^+\setminus\Delta_J^+,
\]
for the positive roots inside and outside the Levi, respectively, and set
\begin{equation}\label{eq:minimal-reps}
 W(J)=\{w\in W:w\Delta^-\cap\Delta^+\subseteq\Delta_+(J)\}.
\end{equation}
These are the minimal-length representatives of the right cosets
$W_Jw$, so $W=W_JW(J)$. For the enumeration we use the equivalent test
\[
 w\in W(J)\quad\Longleftrightarrow\quad
 w^{-1}\alpha_j\in\Delta^+\quad\text{for every }j\in J.
\]

The affine Levi is generated by $e_0,e_1,f_0,f_1$ and the full Cartan subalgebra of $g(A)$. We include the full Cartan subalgebra so that the lowest weight also specifies the depth at which each affine module starts. Write $V_J(\lambda)$ for its irreducible integrable highest-weight module with highest weight $\lambda$. The following theorem is the Kac--Moody analogue of \emph{Kostant's formula} for finite-dimensional semisimple Lie algebras \cite{Kostant61}, due to Garland and Lepowsky \cite{GL}. We use the formulation of Kang and Kim \cite[Proposition 3.4, equation (3.9)]{KK}.

\begin{theorem}[Parabolic homology]\label{thm:parabolic-homology}
For the negative-level radical $\nn_-=\bigoplus_{\ell<0}g_{(\ell)}$ and every $i\ge0$, the
parabolic homology formula \cite[Proposition 3.4, equation (3.9)]{KK} gives
\begin{equation}\label{eq:GL}
 H_i(\nn_-,\C)\cong
 \bigoplus_{\substack{w\in W(J)\\\len(w)=i}}
                  V_J(w\rho-\rho).
\end{equation}
Correspondingly, $H_i(\nn,\C)$ is the direct sum of the irreducible
lowest-weight affine modules with lowest weights
\begin{equation}\label{eq:beta}
                              \beta(w)=\rho-w\rho
\end{equation}
for $w\in W(J)$ with $\len(w)=i$, each occurring once.
\end{theorem}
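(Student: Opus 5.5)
The first half of the statement is the Garland--Lepowsky theorem in the form given by Kang--Kim, so I will quote it rather than reprove it. What remains is to check that the hypotheses of \cite[Proposition 3.4]{KK} hold for our data, and then to transport the result from $\nn_-$ to $\nn$.

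\textbf{Hypotheses.} I will check the following in order.
\begin{enumerate}[label=(\alph*)]
\item $J=\{0,1\}$ is a subset of the index set, and the Levi subalgebra $\mathfrak g_J$ (generated by $e_0,e_1,f_0,f_1$ and the full Cartan $\mathfrak h$) is the affine algebra $A_1^{(1)}$ extended by $\mathfrak h$.
\item $\nn_-=\bigoplus_{\ell<0}g_{(\ell)}$ is exactly the nilradical $\bigoplus_{\alpha\in-\Delta_+(J)}g_\alpha$ of the opposite parabolic. This holds because $\Delta_+(J)$ consists precisely of the positive roots with $\ell>0$.
\item $W(J)$ as in \eqref{eq:minimal-reps} is the Kang--Kim set of minimal coset representatives. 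The equivalent test $w^{-1}\alpha_j>0$ is the standard one: $w^{-1}\alpha_j<0$ is equivalent to $\alpha_j\in w\Delta^-\cap\Delta^+$. Since $\Delta_J^+$ is spanned by the $\alpha_j$ together with everything generated from them inside the Levi, and $w\Delta^-\cap\Delta^+$ is closed under the relevant sums, the two conditions agree.
\end{enumerate}
With these in place, \eqref{eq:GL} is literally \cite[equation (3.9)]{KK}. The integrability needed for $V_J(w\rho-\rho)$ holds because $w\in W(J)$ implies $\langle w\rho,\alpha_j^\vee\rangle=\langle\rho,w^{-1}\alpha_j^\vee\rangle>0$ for $j\in J$, so $w\rho-\rho$ is $J$-dominant integral.

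\textbf{Transport to $\nn$.} Apply the Chevalley involution $\omega$. It is an automorphism of $g(A)$ with $\omega(g_\alpha)=g_{-\alpha}$ and $\omega|_{\mathfrak h}=-\mathrm{id}$. It therefore maps $\nn_-$ isomorphically onto $\nn$ and the Levi onto itself. Consequently $H_i(\nn,\C)\cong H_i(\nn_-,\C)^\omega$ as Levi-modules, where $^\omega$ denotes the twist by $\omega$.

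The twist of the irreducible highest-weight module $V_J(\lambda)$ is the irreducible lowest-weight module with lowest weight $-\lambda$, because weights are negated and $e_j$ is exchanged with $-f_j$. Taking $\lambda=w\rho-\rho$ gives lowest weight $\rho-w\rho=\beta(w)$, as in \eqref{eq:beta}.

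Multiplicity one comes from the direct-sum indexing in \eqref{eq:GL}, together with the fact that distinct $w\in W(J)$ give distinct $w\rho$, since $\rho$ is regular. The root-lattice grading is preserved by this identification, with $\alpha$ replaced by $-\alpha$. This matches the convention of Lemma~\ref{lem:minimal}, where $H_i(\nn,\C)$ is multigraded by positive degrees.

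\textbf{Main obstacle.} The delicate step is matching conventions. Kang--Kim may use homology of the positive radical with lowest-weight modules, or a dual or cohomological version, so signs and duals need care. The point to confirm is that the homology of $\nn$ (as opposed to the cohomology) is the twist described above; cohomology would instead introduce restricted duals. In each weight space these are finite-dimensional and have the same character, so the character identity, which is all that $D_5$ requires, is unaffected either way. I would also confirm that including the full Cartan subalgebra pins down the starting depth of each module through $\beta(w)$.
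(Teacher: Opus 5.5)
Your proposal is correct and follows the paper's proof: quote Kang--Kim's formula (3.9) with trivial coefficients for $\nn_-$, then transport along the Chevalley involution, which negates Cartan weights and turns each $V_J(w\rho-\rho)$ into the irreducible lowest-weight module with lowest weight $\rho-w\rho$. Your hypothesis checks, the $J$-dominance of $w\rho-\rho$, and the regularity of $\rho$ are detail the paper leaves implicit, but two points should be tightened: in (c) the clean reason is linearity, since for $\gamma=c_0\alpha_0+c_1\alpha_1\in\Delta_J^+$ the root $w^{-1}\gamma=c_0w^{-1}\alpha_0+c_1w^{-1}\alpha_1$ has nonnegative simple-root coefficients once both $w^{-1}\alpha_j$ are positive, and in your last paragraph a restricted dual has the weight-negated character rather than the same one, which is harmless only because the restricted dual of $V_J(\lambda)$ and its $\omega$-twist are both the irreducible lowest-weight module of lowest weight $-\lambda$.
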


\begin{proof}
Formula \eqref{eq:GL} is \cite[Proposition 3.4, equation (3.9)]{KK} with trivial coefficients. The Chevalley involution maps $\nn_-$ to $\nn$ and negates Cartan weights, so it takes each highest-weight summand to a lowest-weight module with lowest weight $\rho-w\rho$.
\end{proof}

It then suffices to enumerate the Weyl elements to obtain characters for each homology group. In products of reflections, the rightmost reflection acts first. Starting from the identity, we multiply the reflection matrices on the right by each $r_i$, retaining each distinct matrix at its shortest length, and apply the test above. There are $37$ Weyl group elements of length $0$ to $5$, of which $7$ lie in $W(J)$. Their weights in Table \ref{tab:homology} can be calculated recursively from
\[
 \beta(1)=0,\qquad \beta(wr_i)=\beta(w)+w\alpha_i.
\]

\begin{table}[H]
\centering
\begin{tabular}{@{}clrrr@{}}
\toprule
$i$ & A representative $w$ & $\beta=(\ell,d,s)$ & $j$ & $d$\\
\midrule
0 & $1$ & $(0,0,0)$ & 0 & 0\\
1 & $r_{-1}$ & $(1,0,0)$ & 0 & 0\\
2 & $r_{-1}r_0$ & $(2,1,-1)$ & 2 & 1\\
3 & $r_{-1}r_0r_1$ & $(4,3,-2)$ & 4 & 3\\
4 & $r_{-1}r_0r_1r_0$ & $(7,6,-3)$ & 6 & 6\\
5 & $r_{-1}r_0r_1r_0r_{-1}$ & $(10,10,-5)$ & 10 & 10\\
5 & $r_{-1}r_0r_1r_0r_1$ & $(11,10,-4)$ & 8 & 10\\
\bottomrule
\end{tabular}
\caption{All parabolic homology constituents through homological degree five.
The coordinates are level, depth and charge; $j=-2s$ is the horizontal highest weight.}
\label{tab:homology}
\end{table}

For a lowest weight $\beta=(\ell,d,s)$, the affine labels are
\[
 \langle\beta,\alpha_0^\vee\rangle=-\ell-2s=-(\ell-j),
 \qquad \langle\beta,\alpha_1^\vee\rangle=2s=-j.
\]
Thus each row of positive homological degree gives a copy of $L_{\ell,j}$ starting at depth $d$, with character $t^\ell q^d\chi_{\ell,j}(q,z)$. Since $\chi_{\ell,j}$ already records charge, no shift in $z$ is needed. Together with $H_0=\C$, we obtain
\begin{equation}\label{eq:d5-characters}
\begin{aligned}
D_5(t,q,z) ={}& 1 - t \chi_{1,0} + t^2 q \chi_{2,2} - t^4 q^3 \chi_{4,4}\\
 &{}+ t^7 q^6 \chi_{7,6} - t^{10} q^{10} \chi_{10,10} - t^{11} q^{10} \chi_{11,8},
\end{aligned}
\end{equation}
where all characters are evaluated at $(q,z)$.

Numerical analysis indicates that this bound is good enough in the high level region.  We simply have to prove it.

\subsection{A bound for high level}\label{sec:cutoff}
At first glance, $D_5$ appears to be quite the complicated object, and it may even look like we have not made much progress.  However recall that Theorem \ref{thm:odd} bounds the multiplicities of $A_1^{++}$ by the coefficients of $F=-\log D_5$, not $D_5$ itself.  We can estimate these coefficients using the zeroes of $D_5$.

Fix positive $q,z$ for which the affine characters converge.  Then $D_5(t,q,z)$ is a polynomial of degree $11$ in $t$ with constant term $1$.  Factor this polynomial as
\[
D_5(t,q,z) = \prod_{j=1}^{11} (1-t/\rho_j(q,z)),
\]
where the zeroes $\rho_j(q,z)$ are counted with multiplicity. They depend on the fixed pair $(q,z)$.  Taking the formal logarithm at $t=0$ gives, for every integer $\ell\geq1$,
\[
0\leq[t^\ell]F(t,q,z)
 =\frac{1}{\ell}\sum_{j=1}^{11}\rho_j(q,z)^{-\ell}
 \leq\frac{1}{\ell}\sum_{j=1}^{11}|\rho_j(q,z)|^{-\ell}.
\]
If we know something about these zeroes uniformly then we can get a uniform bound.

\begin{lemma}[Zero bounds]\label{lem:input}
Let $\ell,d,s$ be real numbers with $\ell>0$, $2\ell\leq d\leq165\ell$, and $-\ell/2\leq s\leq0$. Put $x=d/\ell$ and $u=s/\ell$, and choose
\begin{equation}\label{eq:cutoff-rule}
 \beta=\frac{\pi}{\sqrt{6(x-u^2-7/8)}},\qquad
 q=e^{-\beta},\quad z=q^{-2u},\quad
 T=e^{1/42}q^{\,x-15/8}.
\end{equation}

The series $F=-\log D_5$ converges at the evaluation point $(T,q,z)$.
Every complex zero $\rho(q,z)$ of the polynomial
$t\mapsto D_5(t,q,z)$ satisfies $|\rho(q,z)|\ge T$.
\end{lemma}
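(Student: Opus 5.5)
Since the polynomial $t\mapsto D_5(t,q,z)$ has constant term $1$ and nonnegative-coefficient logarithm (Theorem~\ref{thm:odd}), the plan is to prove both claims from one positivity argument. Write $D_5=1-\sum_{k}a_k(q,z)t^k$ after grouping terms by level. Instead of locating the zeros directly, I will work with $D_5^{-1}=\sum_\ell c_\ell(q,z)t^\ell$. By \eqref{eq:inverse-positive} this series has nonnegative coefficients in $(t,q,z)$, so for positive $q,z$ each $c_\ell(q,z)\ge0$. Pringsheim's theorem then says the radius of convergence in $t$ of $D_5^{-1}$ is attained at a positive real singularity. That singularity must be the smallest positive real zero $\rho_*$ of $D_5(\cdot,q,z)$, and every complex zero satisfies $|\rho|\ge\rho_*$. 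The lemma therefore reduces to a one-variable real statement: $D_5(t,q,z)>0$ for $0\le t\le T$.

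The same reduction handles convergence of $F$. We have $F=\sum_j-\log(1-t/\rho_j)$, which converges absolutely for $|t|<\min|\rho_j|$. Because $F$ has nonnegative coefficients, convergence at the point $(T,q,z)$ is the same as $T$ lying strictly inside the radius. So it suffices to show $D_5(t,q,z)>0$ on the closed interval $[0,T]$, which gives $\rho_*>T$. One preliminary point: the affine characters $\chi_{k,j}(q,z)$ converge at $z=q^{-2u}$ with $|u|\le1/2$. This follows from the theta-function form of the Weyl--Kac formula in Appendix~\ref{app:characters}, which has Gaussian decay in the charge.

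The core step is an explicit two-sided estimate of each $\chi_{k,j}(q,z)$, and the main obstacle is making it uniform in the relevant parameters. Using the string-function / theta decomposition, the leading behaviour is
\[
\chi_{k,j}(q,z)\approx P(q)^{3}\cdot(\text{theta factor})\cdot(\text{Weyl numerator}),
\]
with $P(q)\approx\sqrt{\beta/2\pi}\,e^{\pi^2/6\beta}$ by \eqref{eq:eta-transform}, and the theta sum is controlled as in Lemma~\ref{lem:gaussian-bound}. After normalising by $t=\tau\,q^{x-15/8}$ (the constants $7/8$ and $15/8$ come from this saddle-point normalisation), each term $t^\ell q^{d_0}\chi_{\ell,j}$ becomes $\tau^\ell$ times a quantity depending only on $(\beta,u)$. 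The inequality then reads $1-\tau g_1+\tau^2g_2-\tau^4g_4+\tau^7g_7-\tau^{10}g_{10}-\tau^{11}g_{11}>0$ for $0\le\tau\le e^{1/42}$. Here the $g_k$ carry explicit upper bounds (for the negative terms) and lower bounds (for the positive terms, or simply $\ge0$) in terms of $\beta\le\pi/\sqrt{6(9/8-1/4)}$ and $u\in[-1/2,0]$.

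I will establish these bounds analytically with rigorous error terms. The resulting two-parameter inequality on the compact box $(\beta,u)$ I plan to verify by subdividing into finitely many rectangles and using monotonicity or interval arithmetic on each one, which is the computer check mentioned after Proposition~\ref{prop:architecture}. At the edge $\beta\to0$, which corresponds to large $x$ and the bound $d\le165\ell$, the factor $e^{1/42}$ must beat the leading $\tau g_1$ term. I expect this to be the tightest region, needing the sharp constant $R(\beta)$-type corrections rather than crude bounds. Dropping the positive $\tau^2,\tau^7$ terms should be tried first, and retained only if the margin fails there.
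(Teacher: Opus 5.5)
Your reduction is correct and uses the same mechanism as the paper: nonnegative coefficients plus Pringsheim. You apply it to $D_5^{-1}\succeq0$, which puts a zero of minimal modulus on the positive axis. Positivity of $D_5$ on $[0,T]$ then gives both $|\rho|>T$ and convergence of $F$ at $T$. The paper applies Pringsheim to $-\log D_5$ at finitely many radii, and gets the zero-free disc from holomorphy of $F$ on $|t|<T$.

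Where you genuinely differ is in passing from finitely many checks to the whole parameter domain. The paper never encloses $D_5$ over a parameter cell. Convexity of nonnegative series in logarithmic coordinates (Lemma~\ref{lem:55-convexity}), together with the concavity allowance $E_B$ (Lemma~\ref{lem:55-rectangle}), reduces each cell to four high-precision point evaluations. These are taken at the slightly inflated radius $e^{\tau(b,g)+E_B}$, with $\tau=\log T$ as in \eqref{eq:tau}. The price, $E_B$, is quadratic in the cell size, which is why $87$ rectangles suffice.

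Your cell-wise interval scheme avoids those two lemmas. In exchange, it needs enclosures that are uniform over each cell. Independent upper and lower bounds for the six terms lose first order in the cell width. That matters because $D_5(T)$ is what survives heavy cancellation. At $(\beta,u)=(1,0)$ the terms are roughly $1-1.888+0.968-0.078+0.001\approx3\times10^{-3}$. The paper's certified lower bounds in Table~\ref{tab:high-level-vertices} go down to about $10^{-6}$. So you would need either a very large number of tiny cells, or centred/Taylor-model enclosures that restore second-order accuracy.

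Some of your heuristics point the wrong way.
\begin{itemize}
\item The tight region is not small $\beta$; near $\beta=1/10$ the paper's certified bounds are about $0.15$. It is the opposite end, $\beta$ close to its maximum $2\pi/\sqrt{21}\approx1.37$, i.e.\ depth ratio near $2$.
\item The saddle estimate $T\chi_{1,0}(q,z)\approx e^{1/42+23\beta/24}/\sqrt2$ exceeds $1$ once $\beta\gtrsim1/3$. So the positive $t^2$ term must be kept on most of the box; positivity there comes from cancellation, not from $e^{1/42}$ beating the linear term.
\item If you bound the characters through the product form \eqref{eq:charged-char}, note that on the edge $u=-1/2$ of the domain one has $z=q$. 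There the factor $1-q/z$ vanishes and the formula becomes $0/0$, so that edge needs separate treatment.
\end{itemize}
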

We delay the proof to Section \ref{sec:zero-bound}.  There we also show that $T$ is chosen so that, with $N=\ell d-\ell^2+1-s^2$,
\begin{equation}\label{eq:exact-exponent}
 T^{-\ell}q^{-d}z^{-s}
 \leq e^{\Aconst\sqrt{N-1}-\ell/60}.
\end{equation}
Now suppose $\ell,d,s$ are integers in the same region, and fix the corresponding $T,q,z$ from \eqref{eq:cutoff-rule}. Applying the zero bound at this same pair $(q,z)$ gives $\sum_{j=1}^{11}|\rho_j(q,z)|^{-\ell}\leq11T^{-\ell}$. By Theorem \ref{thm:odd} and the nonnegative depth and charge coefficients of $F$, we have
\[
m(\ell,d,s)q^dz^s\leq[t^\ell]F(t,q,z)
 \leq\frac{11}{\ell}T^{-\ell}.
\]
Multiplying by $q^{-d}z^{-s}$ and applying \eqref{eq:exact-exponent} gives
\[
 m(\ell,d,s)\le\frac{11}{\ell}T^{-\ell}q^{-d}z^{-s}
 \le\frac{11}{\ell}e^{\Aconst\sqrt{N-1}-\ell/60}.
\]
Since $\sqrt{N-1}\leq\sqrt N$, we can compare this with the partition bound \eqref{eq:partition-bound}. Using $3\ell^2/4+1\le N\le164\ell^2+1$ gives
\[
 \frac{m(\ell,d,s)}{p(N)}
 <\frac{44\sqrt3\,N}{\ell(1-1/(2\sqrt N))}e^{-\ell/60}
 \le\frac{44\sqrt3(164\ell^2+1)}{\ell-1/\sqrt3}e^{-\ell/60}.
\]
The logarithmic derivative of the last expression is
\[
\frac{328\ell}{164\ell^2+1}-\frac{1}{\ell-1/\sqrt3}-\frac{1}{60}
 <\frac{1}{\ell}-\frac{1}{60}<0\qquad(\ell>60).
\]
At $\ell=1000$ the expression is less than $0.723$, so it remains less than $1$ for all $\ell\geq1000$. This proves

\begin{proposition}\label{prop:homological-cutoff}
For integer coordinates with $\ell\ge1000$, $2\ell\le d\le165\ell$, and
$-\ell/2\le s\le0$, one has $m(\ell,d,s)<p(N)$.
\end{proposition}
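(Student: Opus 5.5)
The proof assembles the estimates already established in this subsection; the only genuinely new input is Lemma~\ref{lem:input} together with the exponent inequality \eqref{eq:exact-exponent}, both deferred to Section~\ref{sec:zero-bound}. I will take them as given and argue as follows.

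First I fix integers $(\ell,d,s)$ in the region and choose $(T,q,z)$ by \eqref{eq:cutoff-rule}. Since $q,z>0$, the coefficients of the polynomial $D_5(t,q,z)$ are real. It has constant term $1$ and degree $11$, so I factor it over its zeros $\rho_j$. Expanding $-\log(1-t/\rho_j)$ then gives
\[
 [t^\ell]F=\frac1\ell\sum_j\rho_j^{-\ell}.
\]

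Next I pass from the formal series to this numerical evaluation. By Theorem~\ref{thm:odd}, $F=-\log D_5$ has nonnegative coefficients in $t^\ell q^dz^s$. Lemma~\ref{lem:input} guarantees convergence at $(T,q,z)$, so the evaluated coefficient $[t^\ell]F(t,q,z)$ dominates the single term $m(\ell,d,s)q^dz^s$. Every zero satisfies $|\rho_j|\ge T$ by Lemma~\ref{lem:input}, which gives
\[
 m(\ell,d,s)\le\frac{11}{\ell}\,T^{-\ell}q^{-d}z^{-s}.
\]
Applying \eqref{eq:exact-exponent} turns this into
\[
 m(\ell,d,s)\le\frac{11}{\ell}\,e^{\Aconst\sqrt{N-1}-\ell/60}.
\]

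I then divide by the lower bound \eqref{eq:partition-bound} for $p(N)$. This needs $N\ge1$, which holds because in the region $N\ge\tfrac34\ell^2+1$; the upper estimate $N\le164\ell^2+1$ comes from $d\le165\ell$ and $s^2\ge0$. The ratio is at most
\[
 \frac{44\sqrt3\,(164\ell^2+1)}{\ell-1/\sqrt3}\,e^{-\ell/60},
\]
where I have used $2\sqrt N\ge\sqrt3\,\ell$ to control the factor $1-1/(2\sqrt N)$. The logarithmic derivative of this expression is bounded by $1/\ell-1/60$, so the expression decreases for $\ell>60$. It therefore suffices to evaluate it at $\ell=1000$. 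There $e^{-1000/60}\approx5.8\times10^{-8}$ against a prefactor of about $1.25\times10^7$, so the product is about $0.72<1$.

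The main obstacle is not in this assembly but in Lemma~\ref{lem:input} itself, which requires uniform control of all zeros of $D_5$ over a two-parameter family of $(q,z)$ through the affine characters. I expect to handle it by a Rouché or dominant-term comparison: show that the terms $1-t\chi_{1,0}$ dominate on $|t|=T$. This would use Weyl--Kac/Gaussian bounds of the type in Lemma~\ref{lem:gaussian-bound} for $\chi_{k,j}$ at real $(q,z)$, checked by subdividing the $(x,u)$ rectangle.
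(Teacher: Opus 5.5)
Your argument is correct and is essentially the paper's own proof of this proposition. It uses the same factorization of $D_5(t,q,z)$ over its zeros, the same domination $m(\ell,d,s)q^dz^s\le[t^\ell]F$ from Theorem~\ref{thm:odd}, and the exponent inequality \eqref{eq:exact-exponent}, leading to the same ratio $\frac{44\sqrt3(164\ell^2+1)}{\ell-1/\sqrt3}e^{-\ell/60}$, which decreases for $\ell>60$ and is about $0.723$ at $\ell=1000$.

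Your closing Rouché plan for Lemma~\ref{lem:input} is not needed here, and it would not work as stated. At the corner $(x,u)=(2,-1/2)$ one has $T\chi_{1,0}(q,z)\approx2.7>1$, so $1-t\chi_{1,0}$ itself vanishes inside $|t|<T$; if it dominated, Rouché would give $D_5$ a zero there. The paper instead certifies $D_5>0$ on real segments $[0,R]$ at the vertices of a rectangle covering. It then excludes complex zeros via Pringsheim's theorem and log-convexity of the nonnegative series $F$.
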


\subsection{Uniform bounds for zeroes}\label{sec:zero-bound} 

The goal of this section is to prove Lemma \ref{lem:input}.

Put $F = - \log D_5$, whose coefficients are nonnegative by Theorem \ref{thm:odd}.  Let $x = d/\ell$ and $u = s / \ell$.  For $(x,u) \in [2,165] \times [-1/2,0]$, choose
\begin{equation}\label{eq:rule}
 \beta=\frac{\pi}{\sqrt{6(x-u^2-7/8)}},\qquad
 q=e^{-\beta},\quad z=q^{-2u},\quad
 T=e^{1/42}q^{\,x-15/8}.
\end{equation}

Observe that $(x,u)$ are in a finite box but cover all possible values in the infinite region $2 \ell \leq d \leq 165 \ell$ and $- \ell/2 \leq s \leq 0$.  This will allow us to perform a finite computation, which will be done by computer.

Put $Y = x-1-u^2$ so that $N-1 = \ell^2 Y$ and $Y\geq3/4$.

\begin{lemma}\label{lem:55-extraction}
\begin{equation}
 \beta(2Y+1/8)\le\Aconst\sqrt Y+\frac1{140}
\end{equation}
\end{lemma}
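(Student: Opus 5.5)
The plan is to verify the inequality by direct algebra. The point is that $\beta$ is an explicit function of $Y$ alone. By \eqref{eq:rule} and the definition $Y=x-1-u^2$, we have $x-u^2-7/8=Y+1/8$. Put $W=Y+1/8$, so that
\[
 \beta=\frac{\pi}{\sqrt{6W}},\qquad 2Y+\tfrac18=2W-\tfrac18 .
\]
Using $\Aconst=\pi\sqrt{2/3}=2\pi/\sqrt6$, the left-hand side becomes
\[
 \beta\Bigl(2Y+\tfrac18\Bigr)=\Aconst\sqrt W-\frac{\pi}{8\sqrt6\,\sqrt W}.
\]
So the claim is equivalent to
\[
 \Aconst\bigl(\sqrt W-\sqrt Y\bigr)-\frac{\pi}{8\sqrt6\,\sqrt W}\le\frac1{140}.
\]

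Next, rationalize with $\sqrt W-\sqrt Y=\frac{1/8}{\sqrt W+\sqrt Y}$ and factor out $\pi/(8\sqrt6)$. The left side becomes
\[
 \frac{\pi}{8\sqrt6}\left(\frac{2}{\sqrt W+\sqrt Y}-\frac1{\sqrt W}\right)
 =\frac{\pi}{8\sqrt6}\cdot\frac{\sqrt W-\sqrt Y}{\sqrt W\,(\sqrt W+\sqrt Y)}
 =\frac{\pi}{64\sqrt6}\cdot\frac{1}{\sqrt W\,(\sqrt W+\sqrt Y)^2}.
\]
This expression is positive and strictly decreasing in $Y$, since $W=Y+1/8$ increases with $Y$. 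Therefore it is maximized at the smallest admissible value of $Y$.

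The constraint comes from the region: $x\ge2$ and $u^2\le1/4$ give $Y\ge3/4$, as already noted before the lemma. At $Y=3/4$ we have $W=7/8$. Numerically $\sqrt W\approx0.9354$ and $\sqrt Y\approx0.8660$, so $\sqrt W(\sqrt W+\sqrt Y)^2\approx3.035$. Also $\pi/(64\sqrt6)\approx0.02004$. The bound is therefore about $0.0066<1/140\approx0.00714$.

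For a rigorous statement, I would replace the decimals by crude rational bounds: $\pi<3.1416$, $\sqrt6>2.449$, $\sqrt{7/8}>0.935$, $\sqrt{3/4}>0.866$. These yield the constant with margin.

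The only delicate point is keeping the cancellation exact. Expanding $\beta$ to first order loses the $1/8$-scale terms that make the bound work, so I would carry out the algebra exactly as above. The final step is a single numerical evaluation at the endpoint $Y=3/4$.
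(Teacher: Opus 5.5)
Your proof is correct and follows the paper's argument. Both derive the exact identity $\beta(2Y+1/8)-\Aconst\sqrt Y=\Aconst/\bigl(128\sqrt{Y+1/8}\,(\sqrt{Y+1/8}+\sqrt Y)^2\bigr)$ (your $\pi/(64\sqrt6)$ equals $\Aconst/128$) and then use that it decreases in $Y$ with $Y\ge3/4$. The only difference is that the paper first weakens $(\sqrt{Y+1/8}+\sqrt Y)^2\ge4Y$ to get the closed form $\pi/(96\sqrt{21})\approx0.0071412$, which beats $1/140$ only barely, whereas your direct evaluation at $Y=3/4$ leaves a comfortable margin.
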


\begin{proof}
Observe that $Y \geq 3/4$ and $\beta = \Aconst/(2 \sqrt{Y+1/8})$.  Then
\begin{align*}
\beta(2Y+1/8) - \Aconst \sqrt{Y} &= \frac{\Aconst}{128 \sqrt{Y + 1/8} (\sqrt{Y + 1/8} + \sqrt{Y})^2} \\
&\leq \frac{\Aconst}{512Y \sqrt{Y + 1/8}} \leq \frac{\pi}{96 \sqrt{21}} < \frac{1}{140}
\end{align*}
as desired.
\end{proof}
Since $1/42-1/140=1/60$, we have
\begin{equation}\label{eq:rule2}
T^{-\ell} q^{-d} z^{-s} = e^{\ell (\beta(2Y + 1/8) - 1/42)} \leq e^{\Aconst \sqrt{N-1} - \ell/60}.
\end{equation}
The constants in \eqref{eq:rule} were chosen in reverse from \eqref{eq:rule2} which we know to be enough.

Before we can prove Lemma \ref{lem:input}, we need a few more technical results.  Let $\gamma=-2\beta u$, so that $z=e^{-\gamma}$, and define
\begin{equation}\label{eq:tau}
 \tau(\beta,\gamma):=\log T
 =\beta-\frac{\pi^2/6+\gamma^2/4}{\beta}+\frac1{42}.
\end{equation}

The inverse change of coordinates is
\[
 u=-\frac{\gamma}{2\beta},\qquad
 x=\frac78+\frac{\pi^2/6+\gamma^2/4}{\beta^2}.
\]
Thus the ratio rectangle $[2,165]\times[-1/2,0]$ maps exactly onto
\begin{equation}\label{eq:real-domain}
\mathcal D=\left\{(\beta,\gamma):\ \begin{gathered}
 \beta>0,\quad 0\leq\gamma\leq\beta,\\
 \frac98\beta^2\leq\frac{\pi^2}{6}+\frac{\gamma^2}{4}
 \leq\frac{1313}{8}\beta^2
\end{gathered}\right\}.
\end{equation}
Since $1/10<\beta<11/8$ and $0\leq\gamma\leq\beta$, this domain is contained in the rectangle
\[
 \mathcal Q=[1/10,11/8]\times[0,11/8].
\]

\begin{lemma}[Convexity in logarithmic coordinates]\label{lem:55-convexity}
Let
\[
 G(t,q,z)=\sum_{n\ge0,\ d,s\in\Z}c_{n,d,s}t^nq^dz^s,
 \qquad c_{n,d,s}\ge0.
\]
Suppose $G$ converges at finitely many positive points
$p_j=(t_j,q_j,z_j)$ and $G(p_j)\le M$.
For $\lambda_j\ge0$ with $\sum_j\lambda_j=1$, it converges at
\[
  p=\left(\prod_jt_j^{\lambda_j},
                 \prod_jq_j^{\lambda_j},
                 \prod_jz_j^{\lambda_j}\right),
 \qquad G(p)\le M.
\]
\end{lemma}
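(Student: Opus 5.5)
The plan is to prove the lemma termwise, using the weighted arithmetic--geometric mean inequality, or equivalently H\"older's inequality. Fix a monomial $t^nq^dz^s$. At the point $p$ its value is
\[
\Bigl(\prod_j t_j^{\lambda_j}\Bigr)^n\Bigl(\prod_j q_j^{\lambda_j}\Bigr)^d\Bigl(\prod_j z_j^{\lambda_j}\Bigr)^s=\prod_j\bigl(t_j^nq_j^dz_j^s\bigr)^{\lambda_j}.
\]
This identity holds for all integer exponents, including negative $s$, because all the coordinates are positive reals. Write $a_j=t_j^nq_j^dz_j^s>0$. The weighted AM--GM inequality gives $\prod_j a_j^{\lambda_j}\le\sum_j\lambda_ja_j$. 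Weights $\lambda_j=0$ cause no trouble: we use the convention $a^0=1$, and such factors drop out of both sides.

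Next I multiply by $c_{n,d,s}\ge0$ and sum over any finite set of indices $(n,d,s)$. The partial sum of $G$ at $p$ is then bounded by
\[
\sum_j\lambda_j\cdot\bigl(\text{partial sum of }G\text{ at }p_j\bigr)\le\sum_j\lambda_j G(p_j)\le M .
\]
The middle inequality holds because every term is nonnegative, so each partial sum at $p_j$ is at most the full convergent sum $G(p_j)$. It follows that the partial sums of $G$ at $p$ are nonnegative and uniformly bounded by $M$. A series of nonnegative terms with bounded partial sums converges, in any order, since unconditional and absolute convergence coincide for nonnegative terms. Passing to the supremum then gives $G(p)\le M$.

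I do not expect a real obstacle here. The one point to state carefully is what "converges" means for a series indexed by $\Z^3$ that may include negative powers of $q$ and $z$. I will take it to mean summability of the nonnegative terms, that is, bounded finite partial sums. With that reading, the termwise argument above covers the general case.
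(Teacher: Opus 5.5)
Your proof is correct and is essentially the paper's argument. The paper applies convexity of the real exponential to $\sum_j\lambda_j(n\log t_j+d\log q_j+s\log z_j)$, which is exactly your weighted AM--GM step; it then sums over finite index sets and takes the supremum of the nonnegative partial sums to obtain both convergence and $G(p)\le M$.
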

\begin{proof}
Let $S$ be the sum over any finite set of monomial indices. For one
index $(n,d,s)$, convexity of the real exponential gives
\begin{align*}
 c_{n,d,s}\prod_j(t_j^nq_j^dz_j^s)^{\lambda_j}
 &=c_{n,d,s}\exp\!\left(\sum_j\lambda_j
        (n\log t_j+d\log q_j+s\log z_j)\right)\\
 &\le\sum_j\lambda_j c_{n,d,s}
        \exp(n\log t_j+d\log q_j+s\log z_j).
\end{align*}
Summing this inequality over the finite set yields
\[
 S(p)\le\sum_j\lambda_jS(p_j)
 \le\sum_j\lambda_jG(p_j)
 \le M\sum_j\lambda_j=M.
\]
Convergence follows by taking the supremum of these nonnegative finite sums.
\end{proof}

In the proof of Lemma \ref{lem:input} we will interpolate in these logarithmic coordinates.  We need to bound the interpolation error (see figure \ref{fig:interpolation-allowance}).  For $B=[b_0,b_1]\times[g_0,g_1]\subset\mathcal Q$, with
$b_0<b_1$ and $g_0<g_1$, define the interpolation allowance
\begin{equation}\label{eq:error}
 E_B=\frac{(\pi^2/3+g_1^2/2)(b_1-b_0)^2}{8b_0^3}
          +\frac{(g_1-g_0)^2}{16b_0}.
\end{equation}

\Needspace{12\baselineskip}
\begin{lemma}[From rectangle vertices to the entire rectangle]
\label{lem:55-rectangle}
Let $B$ and $E_B$ be as above, let $\tau$ be given by
\eqref{eq:tau}, and let $G$ be a nonnegative series as in
Lemma~\ref{lem:55-convexity}.
If it converges and is at most $M$ at all four points
\[
 \left(e^{\tau(b_i,g_j)+E_B},e^{-b_i},e^{-g_j}\right),
 \qquad i,j\in\{0,1\},
\]
then it converges and is at most $M$ at
\[
 \left(e^{\tau(\beta,\gamma)},e^{-\beta},e^{-\gamma}\right)
 \qquad\text{for every }(\beta,\gamma)\in B.
\]
\end{lemma}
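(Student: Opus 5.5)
The plan is to combine Lemma~\ref{lem:55-convexity} with a monotonicity step in $t$ and a standard bilinear-interpolation error estimate for $\tau$.

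\textbf{Step 1: bilinear weights.} Fix $(\beta,\gamma)\in B$ and set
\[
\lambda=\frac{\beta-b_0}{b_1-b_0},\qquad \mu=\frac{\gamma-g_0}{g_1-g_0},
\]
both in $[0,1]$. Define the four bilinear weights
\[
\lambda_{00}=(1-\lambda)(1-\mu),\quad \lambda_{10}=\lambda(1-\mu),\quad \lambda_{01}=(1-\lambda)\mu,\quad \lambda_{11}=\lambda\mu .
\]
They are nonnegative and sum to $1$. Since $\log q=-\beta$ and $\log z=-\gamma$ are linear in $(\beta,\gamma)$, bilinear interpolation reproduces them exactly: $\sum\lambda_{ij}b_i=\beta$ and $\sum\lambda_{ij}g_j=\gamma$.

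\textbf{Step 2: apply the convexity lemma.} Lemma~\ref{lem:55-convexity} applies to the four corner points. It shows that $G$ converges and is at most $M$ at
\[
\bigl(e^{\,\mathcal I\tau(\beta,\gamma)+E_B},\,e^{-\beta},\,e^{-\gamma}\bigr),
\qquad\text{where } \mathcal I\tau(\beta,\gamma)=\sum_{i,j}\lambda_{ij}\,\tau(b_i,g_j)
\]
is the bilinear interpolant of $\tau$ on $B$.

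\textbf{Step 3: monotonicity in $t$.} Every monomial has $n\ge0$ and nonnegative coefficient. For fixed positive $q,z$, each finite partial sum is therefore nondecreasing in $t>0$, and hence so is $G$, including its convergence. So it suffices to prove the pointwise inequality
\[
\tau(\beta,\gamma)\le \mathcal I\tau(\beta,\gamma)+E_B .
\]

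\textbf{Step 4: interpolation error (the only real computation).} Write $\mathcal I=\mathcal I_\beta\mathcal I_\gamma$ as the composition of the two one-dimensional linear interpolations, and decompose
\[
\tau-\mathcal I\tau=(\tau-\mathcal I_\beta\tau)+\mathcal I_\beta(\tau-\mathcal I_\gamma\tau).
\]
I will use the one-variable estimate $|f-\mathcal I f|\le \tfrac{h^2}{8}\max|f''|$ for linear interpolation on an interval of length $h$. From \eqref{eq:tau},
\[
\partial_\beta^2\tau=-\frac{2(\pi^2/6+\gamma^2/4)}{\beta^3},\qquad
\partial_\gamma^2\tau=-\frac{1}{2\beta}.
\]
On $B$ these satisfy
\[
|\partial_\beta^2\tau|\le \frac{\pi^2/3+g_1^2/2}{b_0^3},\qquad |\partial_\gamma^2\tau|\le\frac1{2b_0},
\]
using $\gamma\le g_1$ and $\beta\ge b_0>0$.

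The first term is therefore at most $\frac{(b_1-b_0)^2(\pi^2/3+g_1^2/2)}{8b_0^3}$. The inner error $\tau-\mathcal I_\gamma\tau$ at $\beta=b_i$ is at most $\frac{(g_1-g_0)^2}{16b_i}\le\frac{(g_1-g_0)^2}{16b_0}$. Since $\mathcal I_\beta$ is a convex combination, the second term obeys the same bound. Adding the two bounds gives exactly $E_B$ from \eqref{eq:error}.

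Both second derivatives are negative, so $\tau$ is in fact concave in each variable and the true sign is favourable. The absolute-value bound suffices, however, so no sign analysis is needed.

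I expect no serious obstacle. The only point needing care is that $E_B$ uses the crude bounds $g_1$ and $b_0$ uniformly over $B$, which is what makes the sup of $|\partial_\beta^2\tau|$ and $|\partial_\gamma^2\tau|$ over the rectangle controllable. Combining Steps 2 through 4 then gives convergence and the bound $M$ at $(e^{\tau(\beta,\gamma)},e^{-\beta},e^{-\gamma})$.
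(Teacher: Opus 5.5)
Your proposal is correct and follows the paper's proof essentially step for step. It uses the same bilinear weights fed into Lemma~\ref{lem:55-convexity}, the same splitting $\tau-I_B\tau=(\tau-I_\beta\tau)+I_\beta(\tau-I_\gamma\tau)$ with the one-dimensional $h^2\|f''\|_\infty/8$ remainder, and the same monotonicity in $t$. One side remark is backwards, though harmless because you work with absolute values: concavity of $\tau$ in each variable gives $\tau\ge I_B\tau$. That is the \emph{unfavourable} direction, and it is precisely why the allowance $E_B$ is needed at all.
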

\begin{proof}
Differentiating \eqref{eq:tau} gives
\[
 \tau_{\beta\beta}
   =-\frac{\pi^2/3+\gamma^2/2}{\beta^3},\qquad
 \tau_{\gamma\gamma}=-\frac1{2\beta}.
\]
On $B$ we have $\beta\ge b_0>0$ and $0\le\gamma\le g_1$, so
\[
 |\tau_{\beta\beta}|\le
 M_\beta:=\frac{\pi^2/3+g_1^2/2}{b_0^3},\qquad
 |\tau_{\gamma\gamma}|\le M_\gamma:=\frac1{2b_0}.
\]

If $If$ linearly interpolates $f\in C^2([a,b])$ at its endpoints,
then, for $a<t<b$, the interpolation remainder gives
\[
 f(t)-If(t)=\frac{f''(\xi)}2(t-a)(t-b)
 \quad\text{for some }\xi\in(a,b).
\]
Since $(t-a)(b-t)\le(b-a)^2/4$,
\[
 |f(t)-If(t)|\le\frac{\|f''\|_\infty(b-a)^2}{8}.
\]
The same bound holds at the endpoints, where the error is zero.

Let $I_\beta$ and $I_\gamma$ be linear interpolation in the respective
coordinates, and let $I_B=I_\beta I_\gamma$ be bilinear interpolation
from the four vertices. The identity
\[
 \tau-I_B\tau=(\tau-I_\beta\tau)
                    +I_\beta(\tau-I_\gamma\tau)
\]
and the fact that $I_\beta$ takes convex averages give
\begin{align*}
 |\tau-I_B\tau|
 &\le |\tau-I_\beta\tau|
                  +I_\beta|\tau-I_\gamma\tau|\\
 &\le\frac{M_\beta(b_1-b_0)^2}{8}
             +\frac{M_\gamma(g_1-g_0)^2}{8}=E_B.
\end{align*}
In particular,
\begin{equation}\label{eq:55-interpolation}
 \tau(\beta,\gamma)\le I_B\tau(\beta,\gamma)+E_B.
\end{equation}

Fix $(\beta,\gamma)\in B$, and write
\[
 \theta=\frac{\beta-b_0}{b_1-b_0},\qquad
 \eta=\frac{\gamma-g_0}{g_1-g_0}.
\]
Both belong to $[0,1]$. The bilinear weights are
\[
 \lambda_{00}=(1-\theta)(1-\eta),\quad
 \lambda_{10}=\theta(1-\eta),\quad
 \lambda_{01}=(1-\theta)\eta,\quad
 \lambda_{11}=\theta\eta.
\]
They are nonnegative and satisfy
\[
 \sum_{i,j}\lambda_{ij}=1,\qquad
 \sum_{i,j}\lambda_{ij}b_i=\beta,\qquad
 \sum_{i,j}\lambda_{ij}g_j=\gamma,\qquad
 \sum_{i,j}\lambda_{ij}\tau(b_i,g_j)=I_B\tau.
\]
Thus the geometric average of the four assumed convergence points is
\[
 \left(e^{I_B\tau(\beta,\gamma)+E_B},e^{-\beta},e^{-\gamma}\right).
\]
Lemma~\ref{lem:55-convexity} proves convergence there, with value at
most $M$. Finally, \eqref{eq:55-interpolation} shows that the desired
first coordinate is no larger. Since the powers of $t$ in $G$ are
nonnegative, decreasing $t$ decreases every nonnegative monomial.
Consequently
\[
 G\!\left(e^{\tau(\beta,\gamma)},e^{-\beta},e^{-\gamma}\right)
 \le G\!\left(e^{I_B\tau(\beta,\gamma)+E_B},e^{-\beta},e^{-\gamma}\right)
 \le M.
\]
This termwise comparison also proves convergence at the desired point.
\end{proof}

\begin{figure}[htbp]
\centering
\includegraphics[width=0.88\linewidth]{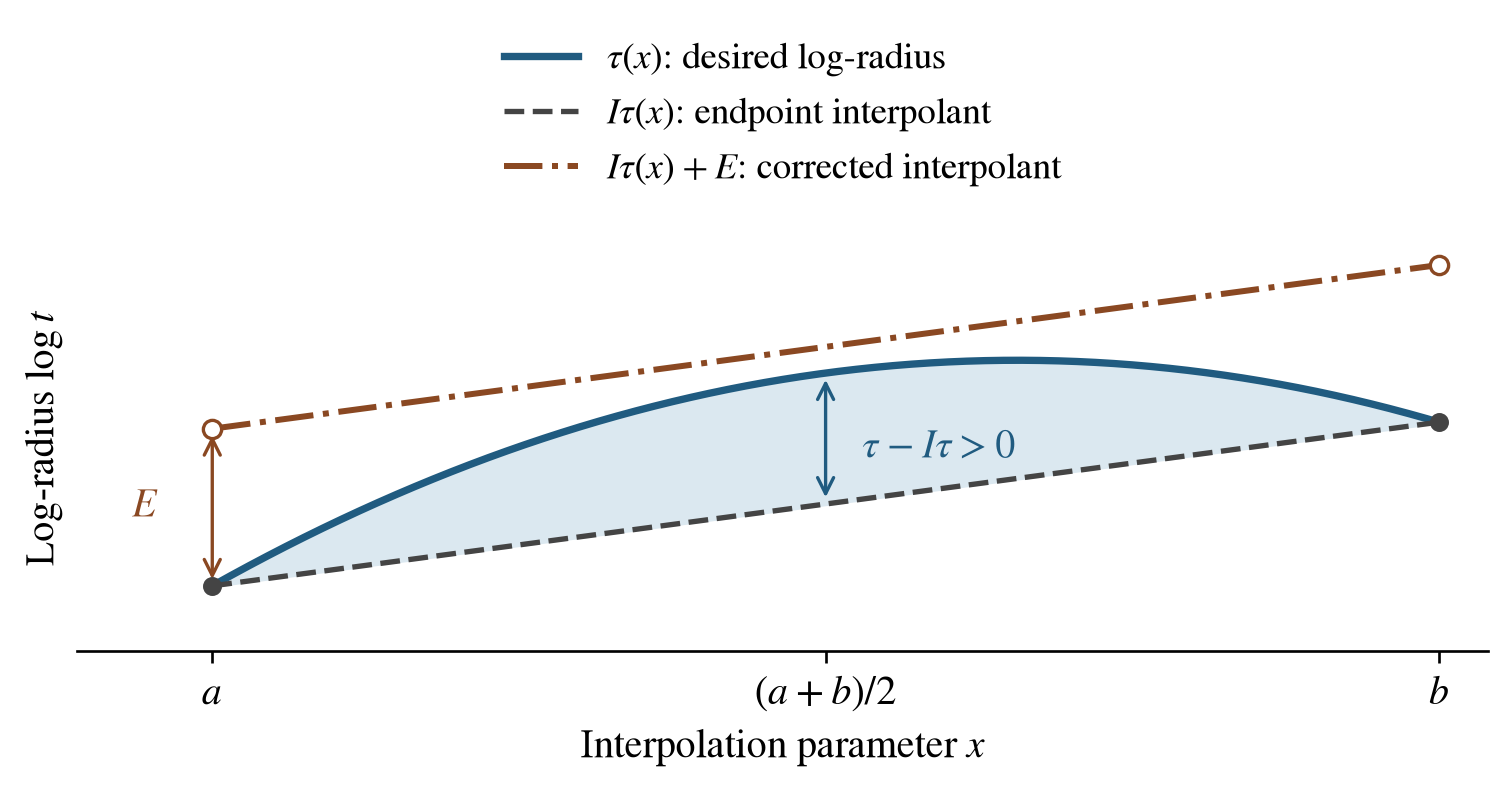}
\caption{The interpolation allowance in one dimension (schematic).
The concave curve gives the desired log-radius. The dashed chord
is controlled by uncorrected endpoint checks, but lies below the
curve. Raising both endpoint log-radii by $E$ produces an
interpolant above the curve, so monotonicity in $t$ gives the
required bound throughout the interval.}
\label{fig:interpolation-allowance}
\end{figure}

The next lemma helpful to convert a bound from a handful of points to a bound on an interval.

\begin{lemma}[Bernstein lower bounds on a whole interval]
\label{lem:55-bernstein}
Let $p$ be a real polynomial of degree at most $m$, and let $a<b$.
Write
\[
 p(a+(b-a)y)=\sum_{j=0}^{m}B_j\binom mj y^j(1-y)^{m-j}.
\]
If certified numbers $C_j$ satisfy
$C_j\le B_j$, then
\[
 p(t)\ge\min_{0\le j\le m}C_j
 \qquad(a\le t\le b).
\]
The same conclusion applies piecewise to a finite partition of an interval.
\end{lemma}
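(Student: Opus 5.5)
The plan is to use the two defining properties of the Bernstein basis on $[0,1]$: its functions are nonnegative and they sum to one. First, the substitution $t=a+(b-a)y$ maps $[a,b]$ bijectively and affinely onto $[0,1]$. So it suffices to show $\tilde p(y):=p(a+(b-a)y)\ge\min_j C_j$ for $0\le y\le1$. Since $\tilde p$ again has degree at most $m$, and the polynomials $\binom mj y^j(1-y)^{m-j}$ for $0\le j\le m$ form a basis of polynomials of degree at most $m$, the coefficients $B_j$ exist and are unique. The hypothesis $C_j\le B_j$ is therefore well posed.

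Second, for $y\in[0,1]$ each basis function $b_j(y)=\binom mj y^j(1-y)^{m-j}$ is nonnegative. By the binomial theorem,
\[
\sum_{j=0}^m b_j(y)=(y+(1-y))^m=1 .
\]
Hence $\tilde p(y)=\sum_j B_jb_j(y)$ is a convex combination of the $B_j$. This gives
\[
\tilde p(y)\ \ge\ \sum_j C_j b_j(y)\ \ge\ \Bigl(\min_j C_j\Bigr)\sum_j b_j(y)=\min_j C_j .
\]
The first inequality uses $B_j\ge C_j$ and $b_j\ge0$.

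Third, for the piecewise statement, let $a=t_0<t_1<\dots<t_k=b$ be a partition. Apply the bound on each $[t_{i-1},t_i]$, with its own Bernstein coefficients and certified lower bounds $C^{(i)}_j$. Every $t\in[a,b]$ lies in some subinterval, so $p(t)\ge\min_{i,j}C^{(i)}_j$.

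There is no real obstacle here. The only point needing care is that the certification $C_j\le B_j$ must be rigorous, for example via interval or rational arithmetic when computing the change of basis. The lemma itself only assumes this certification as a hypothesis.
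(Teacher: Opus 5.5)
Your proof is correct and follows the paper's own argument. On $[0,1]$ the Bernstein basis is nonnegative and sums to one by the binomial theorem, so $p(a+(b-a)y)$ is a convex combination of the $B_j$ and is at least $\min_j C_j$; the piecewise case follows by applying this on each subinterval, and the paper's extra change-of-basis formula $B_j=\sum_{k\le j}c_k\binom jk/\binom mk$ is only a computational aid, which your existence-and-uniqueness remark makes unnecessary for the lemma itself.
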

\begin{proof}
For $0\le y\le1$, every basis function is nonnegative and
\[
 \sum_{j=0}^{m}\binom mj y^j(1-y)^{m-j}
 =(y+(1-y))^m=1.
\]
The value of the polynomial is therefore a convex combination of its
Bernstein coefficients. In particular,
\[
 p(a+(b-a)y)
 \ge\sum_{j=0}^{m}C_j\binom mj y^j(1-y)^{m-j}
 \ge\min_jC_j.
\]
Applying this on every subinterval of a partition covers the entire
original interval, including its endpoints.

To calculate the Bernstein coefficients, set
$p(a+(b-a)y)=\sum_{k=0}^{m}c_ky^k$, then
\[
 B_j=\sum_{k=0}^{j}c_k\frac{\binom jk}{\binom mk}.
\]
Indeed,
\begin{align*}
 \sum_{j=k}^{m}\frac{\binom jk}{\binom mk}
                   \binom mj y^j(1-y)^{m-j}
 &=y^k\sum_{r=0}^{m-k}\binom{m-k}{r}y^r(1-y)^{m-k-r}\\
 &=y^k
\end{align*}
as desired.  In practice we then only need to check a finite number of inequalities to prove a bound for the entire interval.
\end{proof}

\begin{proof}[Proof of Lemma \ref{lem:input}]
We will prove convergence and the bound
\[
0\leq F(e^{\tau(\beta,\gamma)},e^{-\beta},e^{-\gamma})\leq14
\qquad((\beta,\gamma)\in\mathcal D).
\]
To obtain this uniform bound, we cover $\mathcal D$ by small rectangles and use Lemma \ref{lem:55-rectangle} to reduce each rectangle to its four vertices. For a vertex $(b,g)$ of a rectangle $B$, put $R=e^{\tau(b,g)+E_B}$. Suppose that
\begin{equation}\label{eq:vertex}
 D_5\!\left(Ry,e^{-b},e^{-g}\right)
                 \ge e^{-14}\qquad(0\le y\le1).
\end{equation}
Set $P(t)=D_5(t,e^{-b},e^{-g})$. The affine characters converge absolutely at the vertices used, so the Taylor coefficients of $-\log P$ are the specialized coefficients of $F$ and are nonnegative. If this series had radius $r\leq R$, Pringsheim's theorem \cite[Theorem IV.6]{FS} would make $r$ a singularity. But \eqref{eq:vertex} gives $P>0$ on $[0,R]$, so its logarithm is analytic near $r$. Thus the radius exceeds $R$, and
\[
0\leq F(R,e^{-b},e^{-g})=-\log P(R)\leq14.
\]
Nonnegativity also gives convergence of the full series. Once this holds at all four vertices, Lemma \ref{lem:55-rectangle} gives convergence and $F\leq14$ throughout $B$.

It remains to check \eqref{eq:vertex}. The expression is a polynomial of degree $11$ in $y$, so Lemma \ref{lem:55-bernstein} applies. A computer program bounds its Bernstein coefficients using rigorous estimates of the affine characters, subdividing the interval when needed. Starting from $\mathcal Q$, it subdivides rectangles whose vertex checks fail and discards only rectangles disjoint from $\mathcal D$. This produces a verified covering by $87$ accepted rectangles with $149$ distinct vertices, at which \eqref{eq:vertex} holds. Hence $F$ converges and is at most $14$ throughout $\mathcal D$.

Finally, fix any pair $(q,z)$ in the prescribed family. Convergence at its corresponding radius $T=e^{\tau(\beta,\gamma)}$ makes $F(t,q,z)$ holomorphic on $|t|<T$. Since $D_5=\exp(-F)$ there, $D_5$ has no zero in this disk. At the endpoint, continuity gives
\begin{equation}\label{eq:d5-lower-bound}
D_5(T,q,z)=e^{-F(T,q,z)}\geq e^{-14}.
\end{equation}
Appendix~\ref{app:characters} gives the character formulas and tail
bounds used in these checks. Appendix~\ref{app:high-level-cover}
gives the covering diagram, its verification, and the full table of
$149$ vertices and their certified bounds.
This proves Lemma \ref{lem:input}.
\end{proof}

\section{The finite remainder}\label{sec:remainder}
With both a high depth and a high level bound, the region remaining is
\begin{equation}\label{eq:s6-region}
 \mathcal B=\{(\ell,d,s)\in\Z^3:\;3\le\ell\le999,\quad
 2\ell\le d\le165\ell-1 , \quad
 -\lfloor\ell/2\rfloor\le s\le0\}.
\end{equation}
At each point we must prove
\[
 m(\ell,d,s)\le p\bigl(N(\ell,d,s)\bigr),\qquad
 N(\ell,d,s)=\ell d-\ell^2+1-s^2.
\]
There are $997$ levels and
\[
 |\mathcal B|
 =\sum_{\ell=3}^{999}163\ell\bigl(\lfloor\ell/2\rfloor+1\bigr)
 =27{,}186{,}972{,}935
\]
integer triples.

One could compute all these multiplicities using Peterson's recurrence \cite[Chapter~11]{Kac}.  However, this is still a lot of roots, so the purpose of this section is to reduce the number requiring exact computation.

Set $N_0=180\,000$ and define
\begin{equation}\label{eq:final-remainder}
 \boxed{\begin{aligned}
  \mathcal E&=\{(\ell,d,s)\in\mathcal B:N(\ell,d,s)\le N_0\},\\
  \mathcal R&=\{(\ell,d,s)\in\mathcal B:N(\ell,d,s)>N_0\}.
 \end{aligned}}
\end{equation}

With this split, $|\mathcal E|=25{,}043{,}737$. Exact computation using the Weyl denominator recurrence verifies the bound at every point of $\mathcal E$. We therefore concentrate on proving the bound on $\mathcal R$ using $D_5$.

\subsection{Monotonicity}

The test we need to design is motivated by the following fact.

\begin{lemma}[Monotonicity]
\label{lem:s6-monotonicity}
For integers $\ell\ge1$, $d\ge2\ell$, and
$-\lfloor\ell/2\rfloor\le s\le0$, one has
\begin{align}
 m(\ell,d,s)&\le m(\ell,d,s+1)\qquad(s<0),
                                      \label{eq:s6-charge-monotonicity}\\
 m(\ell,d,s)&\le m(\ell,d+1,s).
                                      \label{eq:s6-depth-monotonicity}
\end{align}
Moreover $m(\ell,d,s)=m(\ell,d,-s)$. These statements are not restricted
to $d<165\ell$ or $\ell<1000$.
\end{lemma}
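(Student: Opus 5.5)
The plan is to derive all three statements from the representation theory of the two $\mathfrak{sl}_2$-subalgebras $\langle e_1,f_1,h_1\rangle$ and $\langle e_0,f_0,h_0\rangle$ of the affine Levi acting on the level-$\ell$ module $g_{(\ell)}$. Both act locally nilpotently on $g(A)$, so $g_{(\ell)}$ is an integrable module for each. We use the following standard fact about a locally finite $\mathfrak{sl}_2$-module $M$ whose weight spaces are finite-dimensional. The weight multiplicities along any string $\mu+2\Z$ are symmetric under $\mu\mapsto-\mu$ and unimodal. Equivalently, $\dim M_\mu\le\dim M_{\mu'}$ whenever $\mu'\equiv\mu\pmod 2$ and $|\mu'|\le|\mu|$. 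This follows by decomposing $M$ into finite-dimensional irreducibles, each of which contributes $1$ to every weight of its string between $-n$ and $n$.

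\emph{Symmetry and charge monotonicity.} The vector $e_1$ moves $(\ell,d,s)$ to $(\ell,d,s+1)$, and $h_1$ acts on charge $s$ by $2s$. So the $\alpha_1$-string through $(\ell,d,s)$ at fixed $\ell,d$ has $h_1$-weights $2s$. The reflection $r_1:(\ell,d,s)\mapsto(\ell,d,-s)$ of \eqref{eq:affine-reflections} preserves multiplicities, which gives $m(\ell,d,s)=m(\ell,d,-s)$. For $s<0$ we have $|2s+2|\le|2s|$, so the $\mathfrak{sl}_2$ fact gives \eqref{eq:s6-charge-monotonicity}. More generally, this argument gives $m(\ell,d,s)\le m(\ell,d,s')$ whenever $|s'|\le|s|$, with no restriction on $d$ or $\ell$.

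\emph{Depth monotonicity.} Since $\alpha_0$ has coordinates $(0,1,-1)$, the vector $e_0$ moves $(\ell,d,s)$ to $(\ell,d+1,s-1)$. By \eqref{eq:pairings}, the $h_0$-weight at $(\ell,d,s)$ is $\mu=-\ell-2s$, and one step along the string raises it to $\mu+2$.
\begin{itemize}
\item Suppose $\ell+2s\ge1$, so $\mu\le-1$. Then $|\mu+2|\le|\mu|$, and the $\mathfrak{sl}_2$ fact gives $m(\ell,d,s)\le m(\ell,d+1,s-1)$. Charge monotonicity at depth $d+1$, applicable because $s-1<0$, then gives $m(\ell,d+1,s-1)\le m(\ell,d+1,s)$.
\item The only case left by the hypothesis $s\ge-\lfloor\ell/2\rfloor$ is $\ell$ even with $s=-\ell/2$. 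There we first use symmetry to write $m(\ell,d,s)=m(\ell,d,\ell/2)$. At charge $\ell/2$ the $h_0$-weight is $-2\ell$, and $|-2\ell+2|\le 2\ell$ because $\ell\ge1$. Hence $m(\ell,d,\ell/2)\le m(\ell,d+1,\ell/2-1)$. Since $|\ell/2-1|\le\ell/2$, the general form of charge monotonicity and symmetry give $m(\ell,d+1,\ell/2-1)\le m(\ell,d+1,\ell/2)=m(\ell,d+1,s)$.
\end{itemize}
This proves \eqref{eq:s6-depth-monotonicity}. Nothing in the argument used $d<165\ell$ or $\ell<1000$.

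The main obstacle is the bookkeeping of the $h_0$-weight in the depth step. A naive single move along $\alpha_0$ goes in the wrong direction exactly at the boundary charge $s=-\ell/2$, where the weight is $0$. The two-route case split above is designed to handle that case. I also need to confirm that the $\mathfrak{sl}_2$ unimodality applies even when some intermediate coordinates are not roots: there the dimension is simply $0$, which is consistent with the decomposition into irreducibles.
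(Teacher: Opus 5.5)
There is a genuine gap in your boundary case ($\ell$ even, $s=-\ell/2$). Your $\mathfrak{sl}_2$ unimodality fact is a valid substitute for the paper's injectivity lemma (that lemma is essentially its proof), and your first case is fine. The failure is in your last step: ``since $|\ell/2-1|\le\ell/2$, \dots\ $m(\ell,d+1,\ell/2-1)\le m(\ell,d+1,\ell/2)$.'' This has the inequality backwards. Unimodality along the $\alpha_1$-string says the multiplicity at smaller $|s|$ is at least the multiplicity at larger $|s|$, so what you actually get is $m(\ell,d+1,\ell/2)\le m(\ell,d+1,\ell/2-1)$.

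The step you need is also false in general. For $\ell=2$, $d=4$, your chain reads $m(2,4,-1)=m(2,4,1)\le m(2,5,0)\le m(2,5,1)$. But the level-two result $m_2(N)=p(N)$ for $N\le19$ gives $m(2,5,0)=p(7)=15>11=p(6)=m(2,5,1)$. Once the $\alpha_0$-move has taken you to $|s|=\ell/2-1$, unimodality in $s$ only bounds multiplicities at larger $|s|$ from above, never from below. So you cannot climb back out to $|s|=\ell/2$.

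The repair is to reverse the order of the two moves, as the paper does.
\begin{itemize}
\item For any $s<0$, first step along $\alpha_1$: $m(\ell,d,s)\le m(\ell,d,s+1)$, since the $h_1$-weight is $2s<0$.
\item Then step along $\alpha_0$ from $(\ell,d,s+1)$ to $(\ell,d+1,s)$. The $h_0$-weight at $(\ell,d,s+1)$ is $-\ell-2s-2\le-2$ because $s\ge-\ell/2$. Hence $|{-\ell-2s}|\le|{-\ell-2s-2}|$, and the step is valid.
\end{itemize}
At $s=-\ell/2$ the second move goes from $h_0$-weight $-2$ to weight $0$, so no special treatment or use of symmetry is needed. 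Your $\alpha_0$-first order is only required at $s=0$, where it coincides with the paper's argument. With this change your proof is complete.
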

\begin{proof}
We use a basic injectivity fact. Suppose operators $E,F_0,H$ satisfy
$[E,F_0]=H$, $[H,E]=2E$, and $[H,F_0]=-2F_0$, with $F_0$ locally
nilpotent. Then $E$ is injective on a weight space of weight $\lambda<0$.
Indeed, if $v\ne0$, $Hv=\lambda v$, and $Ev=0$, choose the largest
$r\ge0$ for which $F_0^rv\ne0$. The commutator formula gives
\[
 EF_0^{r+1}v=(r+1)(\lambda-r)F_0^rv.
\]
Its left side is zero, since $F_0^{r+1}v=0$, while its right side is
nonzero: $r+1>0$, $\lambda-r<0$, and $F_0^rv\ne0$. This contradiction
proves injectivity. Local nilpotence of the adjoint simple-root
operators is part of the standard integrability of the adjoint
Kac--Moody action; see \cite{Kac}.

Write $g_{\ell,d,s}$ for the root space with level $\ell$, depth $d$ and charge $s$. The weights for $h_0$ and $h_1$ are
\[
          -\ell-2s\quad\hbox{and}\quad2s,
\]
respectively, by \eqref{eq:pairings}. Adding $\alpha_1$ changes
$(d,s)$ to $(d,s+1)$, whereas adding $\alpha_0$ changes it to
$(d+1,s-1)$. For $s<0$, the weight $2s$ is negative, so
\[
 \operatorname{ad}e_1:
       g_{\ell,d,s}\hookrightarrow g_{\ell,d,s+1}
\]
is injective. This proves \eqref{eq:s6-charge-monotonicity}.

For depth increase with $s<0$, compose
\[
 g_{\ell,d,s}
 \xhookrightarrow{\ \operatorname{ad}e_1\ }
 g_{\ell,d,s+1}
 \xhookrightarrow{\ \operatorname{ad}e_0\ }
 g_{\ell,d+1,s}.
\]
The second map has source weight
\[
 -\ell-2(s+1)=-\ell-2s-2\le-2<0,
\]
since $s\ge-\ell/2$. Both maps are therefore injective. For $s=0$,
reverse their order:
\[
 g_{\ell,d,0}
 \xhookrightarrow{\ \operatorname{ad}e_0\ }
 g_{\ell,d+1,-1}
 \xhookrightarrow{\ \operatorname{ad}e_1\ }
 g_{\ell,d+1,0}.
\]
The source weights are now $-\ell$ and $-2$. This proves
\eqref{eq:s6-depth-monotonicity}, including its boundary case.
Finally, reflection in $\alpha_1$ sends $(\ell,d,s)$ to
$(\ell,d,-s)$ and preserves multiplicities.
\end{proof}

\begin{corollary}
\label{cor:s6-strip}
For $\ell,d,s$ as in Lemma~\ref{lem:s6-monotonicity}, all integers
$r\ge0$ and $k$ with $s\le k\le-s$ satisfy
\begin{equation}\label{eq:s6-strip}
                  m(\ell,d+r,k)\ge m(\ell,d,s).
\end{equation}
\end{corollary}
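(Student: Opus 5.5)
We derive the corollary directly from Lemma~\ref{lem:s6-monotonicity}, chaining the depth and charge monotonicities. Fix $\ell\ge1$, $d\ge2\ell$, $-\lfloor\ell/2\rfloor\le s\le0$, an integer $r\ge0$, and $k$ with $s\le k\le -s$. The plan is to first raise the depth from $d$ to $d+r$ at fixed charge $s$, and then move the charge from $s$ to $k$ at fixed depth $d+r$.

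\emph{Step 1 (depth).} Apply \eqref{eq:s6-depth-monotonicity} $r$ times. At each intermediate point $(\ell,d+i,s)$ the hypotheses of the lemma still hold: the level is unchanged, $d+i\ge d\ge2\ell$, and $s$ is unchanged. By induction on $i$ this gives $m(\ell,d+r,s)\ge m(\ell,d,s)$.

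\emph{Step 2 (charge).} Split into two cases according to the sign of $k$.

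If $s\le k\le0$, apply \eqref{eq:s6-charge-monotonicity} repeatedly at depth $d+r$, passing through the charges $s,s+1,\dots,k$. Every intermediate charge $s'$ satisfies $-\lfloor\ell/2\rfloor\le s\le s'<k\le0$, so $s'<0$ and the lemma applies at each step. This gives $m(\ell,d+r,k)\ge m(\ell,d+r,s)$.

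If $0<k\le -s$, use the symmetry $m(\ell,d+r,k)=m(\ell,d+r,-k)$ from the lemma. Since $s\le -k<0$, the previous case applies to $-k$.

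Combining Step~1 and Step~2 yields \eqref{eq:s6-strip}.

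None of these steps presents a real obstacle. The only point needing care is to confirm at every application that the point lies inside the range where Lemma~\ref{lem:s6-monotonicity} holds, namely $d'\ge2\ell$ and $-\lfloor\ell/2\rfloor\le s'\le0$, with $s'<0$ whenever the charge step is used. Both conditions are preserved because depth only increases and charge only moves toward $0$. The lemma is not restricted to $d<165\ell$ or $\ell<1000$, so no upper bound on the depth is needed.
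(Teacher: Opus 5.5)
Your proposal is correct and matches the paper's proof: first raise the depth $r$ times at fixed charge $s$, then raise the charge from $s$ to $-|k|$, and use the reflection symmetry $m(\ell,d+r,k)=m(\ell,d+r,-k)$ when $k>0$. Your explicit check that every intermediate point stays in the range where Lemma~\ref{lem:s6-monotonicity} applies (with $s'<0$ at each charge step) is a welcome detail that the paper leaves implicit.
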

\begin{proof}
First increase depth $r$ times, obtaining
$m(\ell,d+r,s)\ge m(\ell,d,s)$. Put $k'=-|k|$. The inequalities
$s\le k\le-s$ give $s\le k'\le0$, so repeated charge increase gives
$m(\ell,d+r,k')\ge m(\ell,d+r,s)$. If $k>0$, horizontal reflection
identifies the multiplicities at $k$ and $k'$; otherwise $k=k'$.
\end{proof}

\subsection{The $D_5$ test}

Put
\[
 F(t,q,z)=-\log D_5(t,q,z)
       =\sum_{\ell\ge1}\sum_{d,s\in\Z}c_{\ell,d,s}t^\ell q^dz^s.
\]

Theorem~\ref{thm:odd} gives
\begin{equation}\label{eq:s6-majorant}
                   c_{\ell,d,s}\ge m(\ell,d,s)\ge0.
\end{equation}
For fixed $\ell$, define
\begin{equation}\label{eq:s6-level-series}
 A_\ell (q,z):=[t^\ell]F(t,q,z)
               =\sum_{d,s}c_{\ell,d,s}q^dz^s.
\end{equation}

At positive parameters where the six characters in
\eqref{eq:d5-characters} converge absolutely, $A_\ell$ is finite. To see
this without first assuming convergence in $t$, use
\[
 [t^\ell]F=\sum_{j=1}^{\ell}\frac1j[t^\ell](1-D_5)^j.
\]

\begin{lemma}[Monotonicity-enhanced real bound]\label{lem:s6-real-bound}
Let $\ell\ge1$, $d\ge2\ell$, and
$-\lfloor\ell/2\rfloor\le s\le0$ be integers. Suppose $0<q_0 <1$, $z_0 >0$,
and $A_\ell(q_0,z_0)$ converges. Define
\[
                      S_s(z_0)=\sum_{j=0}^{-2s}z_0^j.
\]
Then
\begin{equation}\label{eq:finite-bound}
 \quad
 m(\ell,d,s)\le
       \frac{(1-q_0)A_\ell(q_0,z_0)}{S_s(z_0)}q_0^{-d}z_0^{-s}.
 \quad
\end{equation}
\end{lemma}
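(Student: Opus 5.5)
The plan is to bound $A_\ell(q_0,z_0)$ from below by a sum of coefficients that are all at least $m(\ell,d,s)$, by combining the majorant \eqref{eq:s6-majorant} with Corollary~\ref{cor:s6-strip}. All coefficients $c_{\ell,d',k}$ are nonnegative, so $A_\ell(q_0,z_0)$ is at least the sum over any subset of monomials. I would keep the strip
\[
 \Sigma=\{(d+r,k): r\ge0,\ s\le k\le -s\}.
\]
On $\Sigma$, Corollary~\ref{cor:s6-strip} gives $m(\ell,d+r,k)\ge m(\ell,d,s)$, and \eqref{eq:s6-majorant} gives $c_{\ell,d+r,k}\ge m(\ell,d+r,k)$. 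For this I need the hypotheses $\ell\ge1$, $d\ge2\ell$ and $-\lfloor\ell/2\rfloor\le s\le0$, which are exactly those of Lemma~\ref{lem:s6-monotonicity}.

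Summing over $\Sigma$ then gives
\[
 A_\ell(q_0,z_0)\ge m(\ell,d,s)\sum_{r\ge0}q_0^{d+r}\sum_{k=s}^{-s}z_0^{k}
 = m(\ell,d,s)\,\frac{q_0^{d}}{1-q_0}\,z_0^{s}\sum_{j=0}^{-2s}z_0^{j}.
\]
The last step substitutes $k=s+j$, so that $\sum_{k=s}^{-s}z_0^k=z_0^sS_s(z_0)$. The geometric series in $r$ converges because $0<q_0<1$.

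Rearranging gives \eqref{eq:finite-bound}. This is legitimate since $S_s(z_0)\ge1>0$, $q_0^d z_0^s>0$, and $A_\ell(q_0,z_0)$ is finite by assumption.

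The main point to check is that the rearrangement of the nonnegative double series is valid. Because every term is nonnegative, $A_\ell(q_0,z_0)$ equals the supremum of its finite partial sums regardless of ordering, so restricting to $\Sigma$ is harmless. I should also make sure the charges $k$ with $k>0$ fall under the reflection part of the corollary, and they do. No further obstacle is expected: the lemma is a direct weighted-counting consequence of monotonicity together with the coefficientwise majorant.
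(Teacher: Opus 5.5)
Your proposal is correct and is essentially the paper's proof. You restrict the nonnegative series $A_\ell(q_0,z_0)$ to the strip $\{(d+r,k):r\ge0,\ s\le k\le -s\}$, apply the majorant \eqref{eq:s6-majorant} and Corollary~\ref{cor:s6-strip}, sum the geometric series, and divide. The only cosmetic difference is that the paper truncates at a finite depth cutoff $J$ and lets $J\to\infty$, whereas you justify the infinite restriction directly by nonnegativity of the terms.
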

\begin{proof}
For an integer cutoff $J\ge0$, nonnegativity,
\eqref{eq:s6-majorant}, and Corollary~\ref{cor:s6-strip} give
\begin{align*}
 A_\ell(q_0,z_0)
 &\ge\sum_{r=0}^{J}\sum_{k=s}^{-s}
                  c_{\ell,d+r,k}q_0^{d+r}z_0^k\\
 &\ge\sum_{r=0}^{J}\sum_{k=s}^{-s}
                  m(\ell,d+r,k) q_0^{d+r}z_0^k\\
  &\ge\sum_{r=0}^{J}\sum_{k=s}^{-s}
                  m(\ell,d,s) q_0^{d+r}z_0^k\\
 &= m(\ell,d,s)q_0^d
                   \left(\sum_{r=0}^{J}q_0^r\right)
                   \left(\sum_{k=s}^{-s}z_0^k\right)\\
 &=m(\ell,d,s)q_0^d z_0^s\frac{1-q_0^{J+1}}{1-q_0}S_s(z_0).
\end{align*}
Let $J\to\infty$. Since $0<q_0<1$, the factor $q_0^{J+1}$ tends to zero.
Divide by $q_0^d z_0^s S_s(z_0)/(1-q_0)>0$ to obtain the claim.
\end{proof}

Fix a level $3\leq\ell\leq999$ and a nondegenerate rectangle in the ratio region $[2,165]\times[-1/2,0]$. Let $(x_c,u_c)$ be its midpoint. The corresponding integer depth and charge ranges in $\mathcal B$ define a rectangle $R=[a,b]\times[c,e]$, with
\[
2\ell\leq a\leq b\leq165\ell-1,\qquad
-\lfloor\ell/2\rfloor\leq c\leq e\leq0.
\]
Empty cells need no test. The lemma gives an upper bound for any admissible parameters; we choose the familiar point
\begin{equation}\label{eq:s6-midpoint-rule}
 \quad
 \beta=\frac{\pi}{\sqrt{6(x_c-u_c^2-7/8)}},\qquad
 q_0=e^{-\beta},\qquad z_0=q_0^{-2u_c}.
 \quad
\end{equation}
Since $x_c-u_c^2-7/8>0$ and $0<q_0<z_0<1$, the characters converge at this point. Set $\overline A_{\ell,R}=A_\ell(q_0,z_0)$.

\begin{remark}
Write $D_5(t,q,z)=1+\sum_{j=1}^{11}b_jt^j$, with $b_j=0$ for $j>11$. The identity $D_5\partial_tF=-\partial_tD_5$ gives
\[
\ell A_\ell=-\ell b_\ell
 -\sum_{j=1}^{\min(11,\ell-1)}b_j(\ell-j)A_{\ell-j}.
\]
This recurrence is used in the computation. The computed values are approximate. We bound their error, including the error in the character evaluations, and replace the exact value $\overline A_{\ell,R}$ by the resulting upper bound. All the estimates below remain valid with this replacement.
\end{remark}

Set the constant
\begin{equation}\label{eq:s6-rectangle-constant}
       C_R=\frac{(1-q_0)\overline A_{\ell,R}}{S_e(z_0)}.
\end{equation}

If $s\le e$, then $-2s\ge-2e$ and $z_0^j>0$, so
\[
             S_s(z_0)\ge S_e(z_0)>0.
\]

Consequently the same constant works at every integer point of $R$:
\begin{equation}\label{eq:s6-rectangle-bound}
                 m(\ell,d,s)\le C_Rq_0^{-d}z_0^{-s}.
\end{equation}

Next write the logarithm of the partition lower bound
\eqref{eq:partition-bound} as
\begin{equation}\label{eq:s6-partition-log}
 f(n)=\Aconst\sqrt n-\log(4\sqrt3\,n)
                    +\log\left(1-\frac1{2\sqrt n}\right),
       \qquad \Aconst=\pi\sqrt{2/3}.
\end{equation}
For positive integers $n$, we have $e^{f(n)}<p(n)$. For a fixed pair
$(q_0,z_0)$ and the rectangle constant $C_R$ in
\eqref{eq:s6-rectangle-constant}, set
\begin{equation}\label{eq:depth-gap}
        \Phi_R(d,s)=f\bigl(N(\ell,d,s)\bigr)
                         +d\log q_0+s\log z_0-\log C_R.
\end{equation}

\begin{lemma}[Real comparison from four corners]\label{new:lem:concave}
If $\Phi_R$ is strictly positive at all distinct corners of $R$, then
\[
                 m(\ell,d,s)<p\bigl(N(\ell,d,s)\bigr)
                       \qquad((d,s)\in R\cap\Z^2).
\]
\end{lemma}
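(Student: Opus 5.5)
The plan is to show that $\Phi_R$, viewed as a function of real $(d,s)$ on the rectangle $R=[a,b]\times[c,e]$, is concave. A concave function on a rectangle attains its minimum at a vertex. So positivity at the (distinct) corners gives $\Phi_R>0$ at every real point of $R$, and in particular at every integer point.

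Once that holds, the conclusion is immediate. By \eqref{eq:s6-rectangle-bound}, $\log m(\ell,d,s)\le \log C_R-d\log q_0-s\log z_0$ whenever $m>0$. By definition, $\Phi_R>0$ means exactly $\log C_R-d\log q_0-s\log z_0<f(N)$. Since $e^{f(n)}<p(n)$ for positive integers $n$, we get $m(\ell,d,s)<p(N)$. If $m=0$ the claim is trivial as long as $p(N)\ge1$, which holds because $N\ge 3\ell^2/4+1\ge1$ on $\mathcal B$.

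For concavity, the terms $d\log q_0+s\log z_0-\log C_R$ are affine in $(d,s)$ and do not matter. The remaining term is $f(N(\ell,d,s))$, where $N=\ell d-\ell^2+1-s^2$ is concave in $(d,s)$: it is linear in $d$ and concave in $s$. A nondecreasing concave function of a concave function is concave, so it suffices to show that $f$ is concave and nondecreasing on the relevant range $n\ge 3\ell^2/4+1\ge 7$ (indeed here $n>N_0$ after the split, but $n\ge7$ is enough).

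Write $f(n)=\Aconst\sqrt n-\log n+\log(1-\tfrac1{2\sqrt n})-\log(4\sqrt3)$ and check the three pieces.
\begin{itemize}
\item $\Aconst\sqrt n$ is concave.
\item $-\log n$ is convex, with second derivative $1/n^2$. It is dominated by the concavity of the square root: $\Aconst n^{-3/2}/4>1/n^2$ once $\sqrt n>4/\Aconst\approx1.56$.
\item For $g(n)=\log(1-\tfrac12 n^{-1/2})$, differentiating gives $g'=\tfrac14 n^{-3/2}/(1-\tfrac12n^{-1/2})=1/(4n^{3/2}-2n)$. This is positive and decreasing for $n\ge1$, because the denominator $4n^{3/2}-2n$ is increasing there. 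So $g$ is concave and increasing.
\end{itemize}
For monotonicity, $f'(n)=\Aconst/(2\sqrt n)-1/n+g'(n)>0$ once $\sqrt n>2/\Aconst$, which certainly holds for $n\ge7$. Hence $f$ is increasing and concave on $[7,\infty)$, and $f\circ N$ is concave on $R$.

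The main obstacle is only bookkeeping. The corners must lie in the domain where $N\ge7$, which is guaranteed by $d\ge2\ell$ and $|s|\le\ell/2$. In the degenerate cases $a=b$ or $c=e$, the rectangle is a segment or a point, and "distinct corners" covers them. The same endpoint argument applies there.
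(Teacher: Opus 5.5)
Your proof is correct and follows the paper's argument essentially step for step: $f$ is increasing and concave (the paper computes $h''$ explicitly, where you instead argue that $g'$ is decreasing), $N$ is concave in $(d,s)$, so $\Phi_R$ is concave and is minimized at a corner, and then \eqref{eq:s6-rectangle-bound} finishes the argument. One small quibble: your parenthetical that $n>N_0$ holds throughout $R$ is not guaranteed, since the algorithm only requires $N(\ell,b,e)>N_0$ at the top corner. Your argument never uses that claim, though; the bound $n\ge 3\ell^2/4+1\ge 7$ is what it relies on.
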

\begin{proof}
This is a concavity argument. With
$h(n)=\log(1-1/(2\sqrt n))$, direct differentiation gives, for $n\ge3$,
\[
 h'(n)=\frac1{2n(2\sqrt n-1)}>0,\qquad
 h''(n)=-\frac{6\sqrt n-2}{(4n^{3/2}-2n)^2}<0.
\]
Hence
\begin{align*}
 f'(n)&=\frac{\Aconst}{2\sqrt n}-\frac1n+h'(n)
        >\frac{\Aconst\sqrt n-2}{2n}>0,\\
 f''(n)&=-\frac{\Aconst}{4n^{3/2}}+\frac1{n^2}+h''(n)
        <\frac{4-\Aconst\sqrt n}{4n^2}<0.
\end{align*}
The last inequalities use
$\Aconst\sqrt n\ge\Aconst\sqrt3=\pi\sqrt2>4$.
Thus $f$ is increasing and concave on $[3,\infty)$.

On the real chamber slice $d\ge2\ell$, $-\ell/2\le s\le0$,
\[
              N(\ell,d,s)\ge\frac34\ell^2+1>3.
\]
For $v=(d_1,s_1)$, $w=(d_2,s_2)$ in that slice and $0\le\lambda\le1$,
\[
 N(\lambda v+(1-\lambda)w)
   =\lambda N(v)+(1-\lambda)N(w)
                       +\lambda(1-\lambda)(s_1-s_2)^2.
\]
Here $N(v)$ abbreviates $N(\ell,d_1,s_1)$, and similarly for $w$.
First using monotonicity and then concavity of $f$ yields
\begin{align*}
 f\bigl(N(\lambda v+(1-\lambda)w)\bigr)
 &\ge f\bigl(\lambda N(v)+(1-\lambda)N(w)\bigr)\\
 &\ge\lambda f(N(v))+(1-\lambda)f(N(w)).
\end{align*}
The remaining terms of \eqref{eq:depth-gap} are affine in $(d,s)$.
Therefore $\Phi_R$ is concave.

Every point of $R$ is a convex combination of its distinct corners $v_j$, including when $R$ is a line segment or a point. For the corresponding nonnegative weights $\lambda_j$ with $\sum_j\lambda_j=1$, we therefore have
\[
 \Phi_R(d,s)\ge\sum_j\lambda_j\Phi_R(v_j)
                   \ge\min_j\Phi_R(v_j)>0.
\]
At an integer point, \eqref{eq:s6-rectangle-bound} now gives
\[
m(\ell,d,s)\leq C_Rq_0^{-d}z_0^{-s}
 <e^{f(N(\ell,d,s))}<p\bigl(N(\ell,d,s)\bigr),
\]
as desired.
\end{proof}

\subsection{Running the algorithm}
For each level $3\leq\ell\leq999$, start from the full ratio rectangle $[2,165]\times[-1/2,0]$. For each cell, form its integer depth and charge rectangle $R=[a,b]\times[c,e]$ as in the preceding subsection, discarding empty cells. If $N(\ell,b,e)\leq N_0$, then $R\subseteq\mathcal E$, since $N$ increases with depth and charge when $s\leq0$. Such a cell is left to the exact computation. Otherwise apply the $D_5$ test: accept the rectangle if $\Phi_R$ is positive at all its corners, and subdivide the ratio cell dyadically if the test fails.

The computation terminates with $74{,}434$ rectangles passing the $D_5$ test. Their coverage of all $27{,}161{,}929{,}198$ triples in $\mathcal R$ is verified, proving the strict bound throughout that region.

The remaining $25{,}043{,}737$ triples in $\mathcal E$ are checked directly using Peterson's recurrence, and all satisfy the bound. Since $\mathcal B=\mathcal E\sqcup\mathcal R$, this completes the finite verification.

Together with the preceding sections, this proves Theorem \ref{thm:main}.

\newpage

\appendix

\section{A derivation of the level-2 characters}

\begin{proof}[Derivation of \eqref{eq:level2strings}]
We start from the full affine character formula in
\cite[eq. (5.24)]{CKMN}. Write
\[
 T_0(q,z)=\sum_{a\in\Z}q^{2a^2}z^{2a},\qquad
 T_1(q,z)=\sum_{a\in\Z}q^{2a(a+1)}z^{2a+1}.
\]
After reversing roots to lowest-weight, we evaluate
$e^{-2\alpha_{-1}-d\delta-s\alpha_1}$ as $q^dz^s$, suppressing
the common level-two factor. Its fundamental weights are
$\boldsymbol\Lambda_0=-\alpha_{-1}-2\delta$ and
$\boldsymbol\Lambda_1=-\alpha_{-1}-2\delta+\alpha_1/2$
\cite[eq. (2.16)]{CKMN}. Thus the extremal weight
$2\boldsymbol\Lambda_1$ has depth $4$ and charge $-1$, and
\[
 \operatorname{Ch}L(2\boldsymbol\Lambda_1)=q^4\chi_{2,2}(q,z).
\]
The theta functions include the factor
$e^{-(\lambda,\lambda)\delta/4}$ at level two. Since
$(2\boldsymbol\Lambda_0)^2=-8$ and
$(2\boldsymbol\Lambda_1)^2=-6$, their translation sums give
\cite[eqs. (B.5)--(B.6)]{CKMN}
\[
 \begin{aligned}
 \Theta_{2\boldsymbol\Lambda_0}
   &=q^{-2}\sum_{a\in\Z}q^{4+2a^2}z^{2a}
     =q^2T_0(q,z),\\
 \Theta_{2\boldsymbol\Lambda_1}
   &=q^{-3/2}\sum_{a\in\Z}q^{4+2a(a+1)}z^{2a+1}
     =q^{5/2}T_1(q,z).
 \end{aligned}
\]
Put $\varphi(q)=\prod_{a\ge1}(1-q^a)=1/P(q)$. The product identity
$\varphi(q^2)=\varphi(q)Q(q)$ gives
$\varphi(q)/\varphi(q^2)=1/Q(q)$. Substituting the preceding
theta functions into the cited character formula therefore yields
\[
 q^4\chi_{2,2}(q,z)=\frac{q^{3/2}}{2Q(q)}
 \left\{
 q^2\bigl(P(\sqrt q)-P(-\sqrt q)\bigr)T_0(q,z)
 +q^{5/2}\bigl(P(\sqrt q)+P(-\sqrt q)\bigr)T_1(q,z)
 \right\}.
\]
The even and odd powers of $\sqrt q$ in the partition series satisfy
\[
 \frac{P(\sqrt q)+P(-\sqrt q)}2=E(q),\qquad
 \frac{P(\sqrt q)-P(-\sqrt q)}{2\sqrt q}=O(q).
\]
Cancelling $q^4$ now gives the full theta decomposition:
\begin{equation}\label{eq:level2-theta-decomposition}
 \chi_{2,2}(q,z)=\frac{O(q)T_0(q,z)+E(q)T_1(q,z)}{Q(q)}.
\end{equation}
Only $T_0$ contains charge $0$, with coefficient $1$ from $a=0$.
Only $T_1$ contains charge $1$, also with coefficient $1$ from $a=0$.
Extracting these coefficients proves \eqref{eq:level2strings}.
In particular, the constant term in $q$ is $z^{-1}+1+z$.
\end{proof}

\clearpage
\section{Character evaluation and explicit tails}\label{app:characters}
This appendix gives the convergent formulas and explicit remainder
bounds used to evaluate the six affine characters in
\eqref{eq:d5-characters}. Their labels $(k,j)$ are
$(1,0),(2,2),(4,4),(7,6),(10,10),(11,8)$.

\subsection{The normalized affine character formula}
Put $K=k+2$ and $J_*=j+1$. The normalized form of the affine
Weyl--Kac character formula is
\begin{equation}\label{eq:charged-char}
 \chi_{k,j}(q,z)=
 \frac{\displaystyle\sum_{n\in\Z}q^{Kn^2+J_*n}S_{2Kn+J_*}(z)}
 {\displaystyle\prod_{a\ge1}(1-q^a)(1-zq^a)(1-q^a/z)},
 \qquad
 S_b(z)=\frac{z^{b/2}-z^{-b/2}}{z^{1/2}-z^{-1/2}}.
\end{equation}
This is the ordinary affine character theorem
\cite[Chapter 10]{Kac}, with the horizontal Weyl denominator divided
out. The level of the shifted highest
weight is $k+2$ and its shifted horizontal label is $j+1$.
The two horizontal Weyl terms in each affine translation orbit form
$S_{2Kn+J_*}$; the translation contributes $Kn^2+J_*n$ to depth.
The $n=0$ term is $S_{j+1}(z)=\sum_{a=-j/2}^{j/2}z^a$,
with the sum taken in steps of one. The three product
factors record the positive imaginary root and the two positive real
root strings after removing the zero-mode horizontal root.

For every nonzero integer $b$,
\begin{equation}\label{eq:S-polynomial}
 S_b(z)=\operatorname{sgn}(b)
       \sum_{a=-(|b|-1)/2}^{(|b|-1)/2}z^a.
\end{equation}
The following estimates establish convergence of the numerator
and bound every omitted term.

\subsection{Numerical estimate}
In this section we describe how the characters are estimated numerically.

For $0<q<z\le1$, truncate the numerator
at $|n|<B$. The Laurent-polynomial expression above gives
\[
 |S_{2Kn+J_*}(z)|\le(2K|n|+J_*)
                    q^{-K|n|-(J_*-1)/2}.
\]
For $|n|=B+h$, the lower bound on the exponent increases by at least
$(2KB-J_*)h$. Summing the resulting geometric series and its first
moment bounds the total omitted numerator by
\begin{equation}\label{eq:charged-tail}
 T_{k,j}(B;q)=
 2q^{KB^2-(K+J_*)B-(J_*-1)/2}
 \left(\frac{2KB+J_*}{1-\sigma}+\frac{2K\sigma}{(1-\sigma)^2}\right),
 \quad\sigma=q^{2KB-J_*}<1.
\end{equation}
The factor two includes both signs of $n$. The evaluator uses $B=20$.

After retaining denominator factors through $H$, put
\[
 S=\frac{q^{H+1}(1+z+z^{-1})}{1-q}.
\]
The omitted factors have the form $1-a_j$ with $0\le a_j<1$
and $\sum_j a_j=S$. When $S<1$, their product belongs to $[1-S,1]$,
by the inequality
\begin{equation}\label{eq:product-tail}
 1-\sum_j a_j\le\prod_j(1-a_j)\le1
 \qquad(0\le a_j<1,\ \sum_j a_j<1)
\end{equation}
which follows by finite induction and monotone limits.
In numerical evaluation, the product cutoff is
$H=\max(60,\lceil50/\beta_-\rceil)$, where $\beta_->0$ is the
lower endpoint of the interval enclosing $\beta$.
\clearpage
\section{The finite covering for the large-level bound}\label{app:high-level-cover}
This appendix records details on the finite covering used in the proof of
Lemma~\ref{lem:input}. It covers the continuous parameter domain for
the large-level argument in Section~\ref{sec:homology}.

\subsection{The rectangles and their coverage}
The domain $\mathcal D$ in \eqref{eq:real-domain} is contained in
$\mathcal Q=[1/10,11/8]\times[0,11/8]$.
We construct an exact
rational subdivision of $\mathcal Q$ into $87$ accepted and $60$
excluded rectangles, with $149$ distinct vertices belonging to
accepted rectangles. Figure~\ref{fig:high-level-cover} shows this
subdivision, including the fine subdivision near the upper-right
boundary of $\mathcal D$.

Coverage is checked by an exact rational sweep. Between each pair of
consecutive $\beta$-coordinates of rectangle edges, the corresponding
$\gamma$-intervals concatenate from $0$ to $11/8$, with no gaps or
overlapping interiors. An excluded rectangle
$[b_0,b_1]\times[g_0,g_1]$ must satisfy at least one of
\[
 g_0>b_1,\qquad
 \frac{\pi^2}{6}+\frac{g_1^2}{4}<\frac98b_0^2,\qquad
 \frac{\pi^2}{6}+\frac{g_0^2}{4}>\frac{1313}{8}b_1^2.
\]
Each test contradicts a defining inequality of $\mathcal D$ throughout
the closed rectangle. The strict comparisons preserve boundary points;
the accepted rectangles therefore cover all of $\mathcal D$.

\begin{figure}[htbp]
\centering
\includegraphics[width=\linewidth]{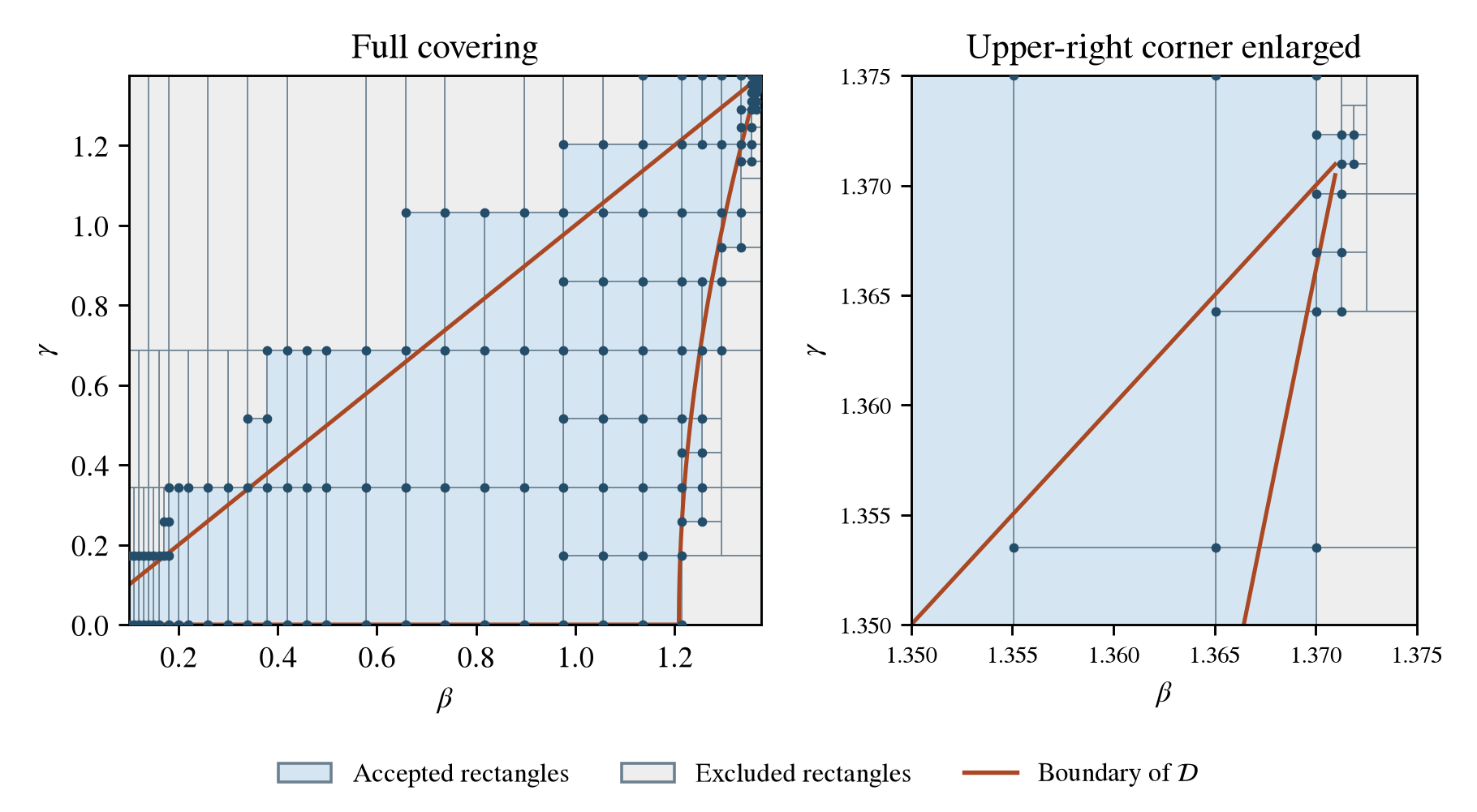}
\caption{The actual rational covering used for the large-level bound.
Blue rectangles are accepted and gray rectangles are excluded.
Dots mark the $149$ accepted vertices; the orange curves mark the
boundary of $\mathcal D$. The right panel enlarges the fine subdivision
near $(11/8,11/8)$.}
\label{fig:high-level-cover}
\end{figure}

\subsection{What is checked at each vertex}
For a shared vertex $v=(b,g)$, the checker computes an outward upper
enclosure $\widehat E_v$ of the largest allowance $E_B$ from
\eqref{eq:error} among all accepted rectangles incident to $v$.
It then sets
\[
 R_v=\exp\bigl(\tau(b,g)+\widehat E_v\bigr),\qquad
 P_v(y)=D_5(R_vy,e^{-b},e^{-g}).
\]
The six characters in \eqref{eq:d5-characters} are enclosed using
Appendix~\ref{app:characters}. Every vertex satisfies $0\le g<2b$,
so the folding procedure there applies. Computations were done using
interval arithmetic at $60$ decimal digits.

Write $P_v(y)=\sum_{k=0}^{11}a_ky^k$. Its degree-eleven Bernstein
coefficients on $[0,1]$ are
\[
 c_j=\sum_{k=0}^j a_k\frac{\binom jk}{\binom{11}k},
 \qquad 0\le j\le11.
\]
If their lower endpoints do not all exceed $e^{-14}$, the checker
bisects the interval and transforms the coefficients by repeated
midpoint averaging (the de Casteljau algorithm). At most three
successive bisections suffice at every vertex. The smallest lower
endpoint over all accepted subintervals is a lower bound for $P_v$
on the entire interval, by Lemma~\ref{lem:55-bernstein}.

Table~\ref{tab:high-level-vertices} lists every vertex and its lower
bound $L_v$, rounded down to six significant digits. In particular,
\[
 P_v(y)\ge L_v\ge9.23354\times10^{-7}>e^{-14}
 \qquad(0\le y\le1).
\]
The convergence argument in Lemma~\ref{lem:input} then gives $F\le14$
at the corrected vertices, and Lemma~\ref{lem:55-rectangle} propagates
this bound throughout the covering.

\clearpage
\begingroup
\small
\setlength{\tabcolsep}{12pt}
\begin{longtable}{r r r l}
\caption{All 149 accepted vertices, ordered by $b$ and then $g$.
Coordinates are exact; $L_v$ is a downward-rounded certified lower bound for $P_v$ on $[0,1]$.}\label{tab:high-level-vertices}\\
\toprule No. & $b$ & $g$ & $L_v$\\ \midrule
\endfirsthead
\multicolumn{4}{c}{\tablename\ \thetable\ (continued)}\\
\toprule No. & $b$ & $g$ & $L_v$\\ \midrule
\endhead
\midrule\multicolumn{4}{r}{Continued on next page}\\\endfoot
\bottomrule\endlastfoot
1 & $1/10$ & $0$ & $1.54390\times10^{-1}$\\
2 & $1/10$ & $11/64$ & $1.54390\times10^{-1}$\\
3 & $563/5120$ & $0$ & $1.46485\times10^{-1}$\\
4 & $563/5120$ & $11/64$ & $1.46485\times10^{-1}$\\
5 & $307/2560$ & $0$ & $1.48765\times10^{-1}$\\
6 & $307/2560$ & $11/64$ & $1.48765\times10^{-1}$\\
7 & $133/1024$ & $0$ & $1.48331\times10^{-1}$\\
8 & $133/1024$ & $11/64$ & $1.48331\times10^{-1}$\\
9 & $179/1280$ & $0$ & $1.46203\times10^{-1}$\\
10 & $179/1280$ & $11/64$ & $1.46203\times10^{-1}$\\
11 & $767/5120$ & $0$ & $1.43002\times10^{-1}$\\
12 & $767/5120$ & $11/64$ & $1.43002\times10^{-1}$\\
13 & $409/2560$ & $0$ & $1.15464\times10^{-1}$\\
14 & $409/2560$ & $11/64$ & $1.15464\times10^{-1}$\\
15 & $869/5120$ & $11/64$ & $1.43776\times10^{-1}$\\
16 & $869/5120$ & $33/128$ & $1.43776\times10^{-1}$\\
17 & $23/128$ & $0$ & $8.56728\times10^{-2}$\\
18 & $23/128$ & $11/64$ & $1.01824\times10^{-1}$\\
19 & $23/128$ & $33/128$ & $1.37271\times10^{-1}$\\
20 & $23/128$ & $11/32$ & $8.56728\times10^{-2}$\\
21 & $511/2560$ & $0$ & $7.29104\times10^{-2}$\\
22 & $511/2560$ & $11/32$ & $7.29104\times10^{-2}$\\
23 & $281/1280$ & $0$ & $3.63779\times10^{-2}$\\
24 & $281/1280$ & $11/32$ & $3.63779\times10^{-2}$\\
25 & $83/320$ & $0$ & $1.62364\times10^{-2}$\\
26 & $83/320$ & $11/32$ & $1.62364\times10^{-2}$\\
27 & $383/1280$ & $0$ & $2.79246\times10^{-2}$\\
28 & $383/1280$ & $11/32$ & $2.79246\times10^{-2}$\\
29 & $217/640$ & $0$ & $3.02749\times10^{-2}$\\
30 & $217/640$ & $11/32$ & $3.02749\times10^{-2}$\\
31 & $217/640$ & $33/64$ & $5.36187\times10^{-2}$\\
32 & $97/256$ & $0$ & $2.89552\times10^{-2}$\\
33 & $97/256$ & $11/32$ & $2.89552\times10^{-2}$\\
34 & $97/256$ & $33/64$ & $4.25988\times10^{-2}$\\
35 & $97/256$ & $11/16$ & $3.45112\times10^{-2}$\\
36 & $67/160$ & $0$ & $2.62294\times10^{-2}$\\
37 & $67/160$ & $11/32$ & $2.56916\times10^{-2}$\\
38 & $67/160$ & $11/16$ & $2.56916\times10^{-2}$\\
39 & $587/1280$ & $0$ & $2.30739\times10^{-2}$\\
40 & $587/1280$ & $11/32$ & $2.26902\times10^{-2}$\\
41 & $587/1280$ & $11/16$ & $2.26902\times10^{-2}$\\
42 & $319/640$ & $0$ & $9.68846\times10^{-3}$\\
43 & $319/640$ & $11/32$ & $8.81053\times10^{-3}$\\
44 & $319/640$ & $11/16$ & $8.81053\times10^{-3}$\\
45 & $37/64$ & $0$ & $1.54325\times10^{-3}$\\
46 & $37/64$ & $11/32$ & $7.50554\times10^{-4}$\\
47 & $37/64$ & $11/16$ & $7.50553\times10^{-4}$\\
48 & $421/640$ & $0$ & $2.57386\times10^{-3}$\\
49 & $421/640$ & $11/32$ & $2.12639\times10^{-3}$\\
50 & $421/640$ & $11/16$ & $2.12647\times10^{-3}$\\
51 & $421/640$ & $33/32$ & $5.43234\times10^{-3}$\\
52 & $59/80$ & $0$ & $2.16737\times10^{-3}$\\
53 & $59/80$ & $11/32$ & $1.90618\times10^{-3}$\\
54 & $59/80$ & $11/16$ & $1.47064\times10^{-3}$\\
55 & $59/80$ & $33/32$ & $1.47043\times10^{-3}$\\
56 & $523/640$ & $0$ & $1.40754\times10^{-3}$\\
57 & $523/640$ & $11/32$ & $1.25324\times10^{-3}$\\
58 & $523/640$ & $11/16$ & $9.95576\times10^{-4}$\\
59 & $523/640$ & $33/32$ & $9.95403\times10^{-4}$\\
60 & $287/320$ & $0$ & $6.99270\times10^{-4}$\\
61 & $287/320$ & $11/32$ & $6.08725\times10^{-4}$\\
62 & $287/320$ & $11/16$ & $4.57079\times10^{-4}$\\
63 & $287/320$ & $33/32$ & $4.57273\times10^{-4}$\\
64 & $125/128$ & $0$ & $1.82161\times10^{-4}$\\
65 & $125/128$ & $11/64$ & $2.14981\times10^{-3}$\\
66 & $125/128$ & $11/32$ & $1.29868\times10^{-4}$\\
67 & $125/128$ & $33/64$ & $2.12732\times10^{-3}$\\
68 & $125/128$ & $11/16$ & $4.14615\times10^{-5}$\\
69 & $125/128$ & $55/64$ & $2.07376\times10^{-3}$\\
70 & $125/128$ & $33/32$ & $4.14929\times10^{-5}$\\
71 & $125/128$ & $77/64$ & $2.02413\times10^{-3}$\\
72 & $169/160$ & $0$ & $9.82137\times10^{-4}$\\
73 & $169/160$ & $11/64$ & $9.78393\times10^{-4}$\\
74 & $169/160$ & $11/32$ & $9.75856\times10^{-4}$\\
75 & $169/160$ & $33/64$ & $9.71680\times10^{-4}$\\
76 & $169/160$ & $11/16$ & $9.62171\times10^{-4}$\\
77 & $169/160$ & $55/64$ & $9.43866\times10^{-4}$\\
78 & $169/160$ & $33/32$ & $9.14514\times10^{-4}$\\
79 & $169/160$ & $77/64$ & $9.12256\times10^{-4}$\\
80 & $727/640$ & $0$ & $4.00955\times10^{-4}$\\
81 & $727/640$ & $11/64$ & $4.01423\times10^{-4}$\\
82 & $727/640$ & $11/32$ & $4.06375\times10^{-4}$\\
83 & $727/640$ & $33/64$ & $4.12914\times10^{-4}$\\
84 & $727/640$ & $11/16$ & $4.17223\times10^{-4}$\\
85 & $727/640$ & $55/64$ & $4.15597\times10^{-4}$\\
86 & $727/640$ & $33/32$ & $4.05388\times10^{-4}$\\
87 & $727/640$ & $77/64$ & $4.06132\times10^{-4}$\\
88 & $727/640$ & $11/8$ & $4.72513\times10^{-4}$\\
89 & $389/320$ & $0$ & $8.71033\times10^{-5}$\\
90 & $389/320$ & $11/64$ & $8.93929\times10^{-5}$\\
91 & $389/320$ & $33/128$ & $2.28792\times10^{-4}$\\
92 & $389/320$ & $11/32$ & $9.67473\times10^{-5}$\\
93 & $389/320$ & $55/128$ & $2.56797\times10^{-4}$\\
94 & $389/320$ & $33/64$ & $1.06718\times10^{-4}$\\
95 & $389/320$ & $11/16$ & $1.16118\times10^{-4}$\\
96 & $389/320$ & $55/64$ & $1.21874\times10^{-4}$\\
97 & $389/320$ & $33/32$ & $1.21788\times10^{-4}$\\
98 & $389/320$ & $77/64$ & $1.15007\times10^{-4}$\\
99 & $389/320$ & $11/8$ & $1.12961\times10^{-4}$\\
100 & $1607/1280$ & $33/128$ & $6.11498\times10^{-5}$\\
101 & $1607/1280$ & $11/32$ & $7.35851\times10^{-5}$\\
102 & $1607/1280$ & $55/128$ & $8.84794\times10^{-5}$\\
103 & $1607/1280$ & $33/64$ & $7.07627\times10^{-5}$\\
104 & $1607/1280$ & $11/16$ & $9.45703\times10^{-5}$\\
105 & $1607/1280$ & $55/64$ & $1.17007\times10^{-4}$\\
106 & $1607/1280$ & $77/64$ & $1.37936\times10^{-4}$\\
107 & $1607/1280$ & $11/8$ & $1.36007\times10^{-4}$\\
108 & $829/640$ & $11/16$ & $1.66137\times10^{-5}$\\
109 & $829/640$ & $55/64$ & $2.42633\times10^{-6}$\\
110 & $829/640$ & $121/128$ & $7.66565\times10^{-5}$\\
111 & $829/640$ & $33/32$ & $1.64878\times10^{-6}$\\
112 & $829/640$ & $77/64$ & $2.25209\times10^{-6}$\\
113 & $829/640$ & $11/8$ & $5.56136\times10^{-5}$\\
114 & $1709/1280$ & $121/128$ & $6.94480\times10^{-6}$\\
115 & $1709/1280$ & $33/32$ & $9.23354\times10^{-7}$\\
116 & $1709/1280$ & $297/256$ & $3.84792\times10^{-5}$\\
117 & $1709/1280$ & $77/64$ & $5.76114\times10^{-6}$\\
118 & $1709/1280$ & $319/256$ & $4.36511\times10^{-5}$\\
119 & $1709/1280$ & $165/128$ & $3.88335\times10^{-5}$\\
120 & $1709/1280$ & $11/8$ & $6.87295\times10^{-6}$\\
121 & $3469/2560$ & $297/256$ & $1.02720\times10^{-5}$\\
122 & $3469/2560$ & $77/64$ & $1.32461\times10^{-5}$\\
123 & $3469/2560$ & $319/256$ & $1.54968\times10^{-5}$\\
124 & $3469/2560$ & $165/128$ & $1.26364\times10^{-5}$\\
125 & $3469/2560$ & $671/512$ & $1.97403\times10^{-5}$\\
126 & $3469/2560$ & $341/256$ & $2.00549\times10^{-5}$\\
127 & $3469/2560$ & $693/512$ & $2.01679\times10^{-5}$\\
128 & $3469/2560$ & $11/8$ & $1.33356\times10^{-5}$\\
129 & $6989/5120$ & $165/128$ & $7.12583\times10^{-6}$\\
130 & $6989/5120$ & $671/512$ & $7.69019\times10^{-6}$\\
131 & $6989/5120$ & $341/256$ & $8.06851\times10^{-6}$\\
132 & $6989/5120$ & $1375/1024$ & $8.64780\times10^{-6}$\\
133 & $6989/5120$ & $693/512$ & $8.26016\times10^{-6}$\\
134 & $6989/5120$ & $1397/1024$ & $8.74933\times10^{-6}$\\
135 & $6989/5120$ & $11/8$ & $8.26696\times10^{-6}$\\
136 & $14029/10240$ & $1375/1024$ & $3.11431\times10^{-6}$\\
137 & $14029/10240$ & $693/512$ & $3.20692\times10^{-6}$\\
138 & $14029/10240$ & $1397/1024$ & $3.25395\times10^{-6}$\\
139 & $14029/10240$ & $5599/4096$ & $3.38739\times10^{-6}$\\
140 & $14029/10240$ & $2805/2048$ & $3.38924\times10^{-6}$\\
141 & $14029/10240$ & $5621/4096$ & $3.38822\times10^{-6}$\\
142 & $14029/10240$ & $11/8$ & $3.25561\times10^{-6}$\\
143 & $56167/40960$ & $1397/1024$ & $2.05176\times10^{-6}$\\
144 & $56167/40960$ & $5599/4096$ & $2.05774\times10^{-6}$\\
145 & $56167/40960$ & $2805/2048$ & $2.06088\times10^{-6}$\\
146 & $56167/40960$ & $11231/8192$ & $2.06762\times10^{-6}$\\
147 & $56167/40960$ & $5621/4096$ & $2.06119\times10^{-6}$\\
148 & $22477/16384$ & $11231/8192$ & $1.41033\times10^{-6}$\\
149 & $22477/16384$ & $5621/4096$ & $1.41046\times10^{-6}$\\
\end{longtable}
\endgroup
\clearpage
\phantomsection
\end{document}